\DeclareFontFamily{OT1}{rsfs}{}
\DeclareFontShape{OT1}{rsfs}{n}{it}{<-> rsfs10}{}
\DeclareMathAlphabet{\mathscr}{OT1}{rsfs}{n}{it}
  \renewcommand*\env@matrix[1][*\c@MaxMatrixCols c]{%
    \hskip -\arraycolsep
    \let\@ifnextchar\new@ifnextchar
  \array{#1}}
\newcommand{\Q}{\mathbb{Q}}
\newcommand{\R}{\mathbb{R}}
\newcommand{\C}{\mathbb{C}}
\numberwithin{equation}{section}
\newcommand{\specialcell}[2][c]{\begin{tabular}[#1]{@{}l@{}}#2\end{tabular}}
\begin{document}

\newtheorem{thm}[subsection]{Theorem}
\newtheorem*{thm*}{Theorem}
\newtheorem{lem}[subsection]{Lemma}
\newtheorem{cor}[subsection]{Corollary}
\newtheorem*{cor*}{Corollary}
\newtheorem{prop}[subsection]{Proposition}
\newtheorem{ques}{Question}

\newtheorem{mainthm}{Theorem}
\renewcommand*{\themainthm}{\Alph{mainthm}}

\theoremstyle{definition}
\newtheorem{none}[subsection]{}
\newtheorem{lect}[subsection]{Lecture}
\newtheorem{rem}[subsection]{Remark}
\newtheorem{ex}[subsection]{Example}
\newtheorem{Def}[subsection]{Definition}
\newtheorem{exer}[subsection]{Exercise}
\newtheorem{conj}[subsection]{Conjecture}
\newtheorem{prob}[subsection]{Problem}
\newtheorem{con}[subsection]{Construction}

\title[Totally Geodesic Spectra of Quaternionic Hyperbolic Orbifolds]{Totally Geodesic Spectra\\ of\\ Quaternionic Hyperbolic Orbifolds}
\author{Jeffrey S. Meyer}
\address{Department of Mathematics\\ 
University of Oklahoma\\ 
Norman, OK 73019 USA}
\email[]{jmeyer@math.ou.edu}

\maketitle
\pagestyle{myheadings}


\let\thefootnote\relax\footnote{\textit{Date: \today} \hfill \tt{jmeyer@math.ou.edu}}

\vspace{-2pc}


\begin{abstract}
In this paper we analyze and classify the totally geodesic subspaces of finite volume quaternionic hyperbolic orbifolds and their generalizations, locally symmetric orbifolds arising from irreducible lattices in Lie groups of the form $(\mathbf{Sp}_{2n}(\R))^q \times \prod_{i=1}^r \mathbf{Sp}(p_i,n-p_i) \times (\mathbf{Sp}_{2n}(\C))^s$.
We give criteria for when the totally geodesic subspaces of such an orbifold determine its commensurability class.
We give a parametrization of the commensurability classes of finite volume quaternionic hyperbolic orbifolds in terms of arithmetic data, 
which we use to show that the complex hyperbolic totally geodesic subspaces of a quaternionic hyperbolic orbifold  determine its commensurability class, but the real hyperbolic totally geodesic subspaces do not.
Lastly, our tools allow us to show that every cocompact lattice $\Gamma<\mathbf{Sp}(m,1)$, $m\ge 2$, 
contains quasiconvex surface subgroups.  
\end{abstract}


\section{Introduction}

\qquad 
Quaternionic hyperbolic space $\mathbf{H}_{\mathbb{H}}^m$ is one of the four classes of rank one Riemannian globally symmetric spaces of noncompact type \cite[\S 19]{Mostow}.
A finite volume quaternionic hyperbolic orbifold is a locally symmetric space arising as a quotient of $\mathbf{H}_{\mathbb{H}}^m$ by a lattice $\Gamma\subset \mathrm{Isom}(\mathbf{H}_{\mathbb{H}}^m)\cong \mathbf{PSp}(m,1)$.   
Such spaces are quaternion-K\"ahler manifolds with negative sectional curvature and, furthermore, are Einstein manifolds.   
In this paper, we analyze and parametrize the totally geodesic subspaces of finite-volume quaternionic hyperbolic orbifolds and their generalizations. 
In particular, we are guided by the following question.

\begin{ques}\label{ques:main}
Let $\mathcal{X}$ denote a set of finite volume Riemannian orbifolds.  Suppose  $M, M'\in \mathcal{X}$ have the property that each finite volume nonflat totally geodesic subspace
$N\subset M$ is commensurable to a totally geodesic subspace $N'\subset M'$.
Must $M$ and $M'$ be commensurable?
\end{ques}

\qquad 
There are obvious refinements of this question where one restricts the collection of totally geodesic subspaces being considered to ones with some additional property $\mathcal{P}$.
The \textbf{rational totally geodesic spectrum relative to} $\mathcal{P}$ of a Riemannian orbifold $M$ is the set
\begin{align}\label{defqtgm}
\Q TG(M, \mathcal{P}):=\left\{ \parbox{3.8in}{\begin{center}Commensurability classes of nonflat, finite volume, totally geodesic subspaces of  $M$ satisfying property $\mathcal{P}$\end{center}}\right\}.
\end{align}
Let $\Q TG(M):=\Q TG(M,\varnothing)$.
We write $M\sim_cM'$ when $M$ and $M'$ are commensurable.  
Question 1 can be succinctly restated as: \textit{For $M,M'\in \mathcal{X}$, does $\Q TG(M)=\Q TG(M')$ imply $M\sim_cM'$?}
The analysis of this question begins by identifying an interesting yet tractable class $\mathcal{X}$ such as a collection of arithmetic Riemannian locally symmetric orbifolds of a fixed Killing--Cartan type.  
In such a case, one may then use the classification of simple algebraic groups over number fields to analyze these spaces.


\qquad 
There are several classes of spaces for which there is a positive answer to Question \ref{ques:main}, 
including arithmetic real hyperbolic 3-manifolds that contain a totally geodesic surface \cite{McR}, standard arithmetic real hyperbolic $m$-manifolds, $m\ge4$ \cite{M14spec}, and more generally
arithmetic locally symmetric orbifolds of type $B_n$ or $D_n$ coming from quadratic forms of dimension at least $5$ where either both spaces are simple or have isomorphic fields of definition. \cite{M14spec}.
However, there are classes of spaces for which there are negative answers as well.  
There are infinitely many commensurability classes of arithmetic real hyperbolic 3-manifolds with no totally geodesic surfaces \cite[Thm 9.5.1]{MaR2}.  
More generally, there are noncommensurable arithmetic manifolds arising from semisimple Lie groups of the form $(\mathbf{SL}_n(\R))^r \times(\mathbf{SL}_n(\C))^s$ (i.e., of inner type $A_{n-1}$) that have the same commensurability classes of totally geodesic surfaces coming from a fixed field \cite{McRey}.

\qquad 
In this paper, we address Question \ref{ques:main} for the class of spaces coming from absolutely almost simple $k$-groups of Killing--Cartan type $C_n$, where $k$ is a number field and $n\ge 3$.  
These spaces are locally symmetric orbifolds arising from irreducible lattices in semisimple Lie groups of the form $(\mathbf{Sp}_{2n}(\R))^q \times \prod_{i=1}^r \mathbf{Sp}(p_i,n-p_i) \times (\mathbf{Sp}_{2n}(\C))^s$, $n\ge 3$.  
In Section \ref{section:construction} we give an explicit construction of these lattices.  
In particular, finite volume quaternionic hyperbolic orbifolds are examples of such spaces.
We use this construction to give a parametrization of commensurability classes of finite volume quaternionic hyperbolic $4m$-orbifolds, $m\ge2$, in terms of \textbf{admissible triples} $(k,v_0,D)$ where $k$ is a totally real number field, $v_0$ is a distinguished real place of $k$, and $D$ is a quaternion division algebra over $k$ that ramifies at every infinite place of $k$ (Definition \ref{admissibledef}).

\begin{mainthm}
[Parametrization of commensurability classes of finite volume quaternionic hyperbolic orbifolds.]
\label{thrmC}
For each $m\ge2$, there is a bijection between commensurability classes of finite volume quaternionic hyperbolic $4m$-orbifolds and equivalence classes of \hyperref[admissibledef]{admissible triples} $(k,v_0,D)$.  
Furthermore, for each such commensurability class, $k$ is its field of definition and $D$ is its algebra of definition.
\end{mainthm}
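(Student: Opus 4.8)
The plan is to establish the bijection in Theorem~\ref{thrmC} by constructing explicit maps in both directions and verifying they are mutually inverse, leveraging the general construction of Section~\ref{section:construction} together with the classification of absolutely almost simple $k$-groups of type $C_n$. First I would recall that a finite volume quaternionic hyperbolic $4m$-orbifold arises as $\Gamma \backslash \mathbf{H}_{\mathbb{H}}^m$ for a lattice $\Gamma \subset \mathbf{PSp}(m,1)$, and by Margulis arithmeticity (rank one requires invoking the arithmeticity of such lattices in this commensurability-classification setting, which should already be built into the class $\mathcal{X}$ under study) each such $\Gamma$ is commensurable to one arising from an absolutely almost simple algebraic $k$-group $\mathbf{G}$ of type $C_{m+1}$ whose real points at the distinguished place $v_0$ give $\mathbf{Sp}(m,1)$ and whose real points at every other infinite place are compact, forcing $k$ to be totally real. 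The forward map sends the commensurability class of the orbifold to the triple $(k, v_0, D)$, where $k$ is the field of definition, $v_0$ records which infinite place yields the noncompact factor $\mathbf{Sp}(m,1)$, and $D$ is the quaternion algebra encoding $\mathbf{G}$ as (the group of similitudes/isometries of) a Hermitian form over $D$.

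The key steps, in order, are as follows. First I would show the assignment is well-defined: commensurable orbifolds share a field of definition and algebra of definition, and the place $v_0$ is intrinsic since it is the unique infinite place at which the group is noncompact and isometric to $\mathbf{Sp}(m,1)$ rather than a compact form. Second, I would check that the triple produced is \emph{admissible} in the sense of Definition~\ref{admissibledef}: $k$ is totally real because the form must be definite (compact) at all places except $v_0$; $D$ must ramify at every infinite place of $k$ because at the compact places $\mathbf{Sp}(p_i, n-p_i)$ with the group compact forces $D$ to be the Hamilton quaternions there, and at $v_0$ the signature $(m,1)$ of the Hermitian form over the ramified $D_{v_0} \cong \mathbb{H}$ produces exactly $\mathbf{Sp}(m,1)$. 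Third, for the inverse map I would take an admissible triple $(k, v_0, D)$ and run the construction of Section~\ref{section:construction} to build a lattice in $\mathbf{Sp}(m,1)$ and hence a finite volume quaternionic hyperbolic $4m$-orbifold, using that $D$ ramifies at all infinite places so that at each archimedean completion the associated group is $\mathbf{Sp}(m,1)$ or a compact form depending on the signature of the chosen Hermitian form at that place, which can be arranged to be noncompact exactly at $v_0$.

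The final and most delicate step is proving that the two maps are mutually inverse, which reduces to a commensurability-invariance statement: two admissible triples are equivalent if and only if the orbifolds they produce are commensurable. Here the key input is the classification of forms of type $C_n$, which says that such a $\mathbf{G}$ is determined up to $k$-isogeny by the pair consisting of the quaternion algebra $D$ and the Hermitian form over $D$ up to similarity; commensurability of the orbifolds corresponds to $k$-isogeny of the algebraic groups after matching fields of definition via an isomorphism sending $v_0$ to $v_0'$. Since $D$ is forced to ramify at all infinite places and the Hermitian form is determined up to scaling and local data at the finite places, while its signatures at the infinite places are pinned down (definite away from $v_0$, signature $(m,1)$ at $v_0$), the only remaining freedom is absorbed into the equivalence relation on triples. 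I expect the main obstacle to be this last commensurability-invariance argument: specifically, ruling out the possibility that two inequivalent quaternion algebras, or two inequivalent Hermitian forms over the same algebra, could yield commensurable orbifolds. This requires carefully applying the fact that the algebra of definition is a commensurability invariant (so $D$ is recovered intrinsically) together with a Hasse-principle style argument showing that once the local behavior at all places is fixed by admissibility and the signature conditions, the Hermitian form — and hence the commensurability class — is pinned down, so that inequivalent triples cannot collapse to the same class.
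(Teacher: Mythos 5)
Your proposal is correct and follows essentially the same route as the paper's proof: surjectivity of the map from admissible triples to commensurability classes via Construction \ref{arithmeticcnlattice} together with the local--global uniqueness of the Hermitian form (Proposition \ref{uniquelocalHermitian} and \cite[Theorem 7.8.1]{Sch} pin down $(V,h)$ once the signatures at the infinite places are prescribed), and injectivity via the Prasad--Rapinchuk result \cite[Proposition 2.5]{PrasadRapinchukWC} that commensurability forces a field isomorphism $\tau\colon k'\to k$ carrying $v_0'$ to $v_0$ and a $k$-rational isomorphism of the groups $\mathbf{SU}(h)$, which recovers the equivalence of triples. One small correction: arithmeticity of lattices in $\mathbf{Sp}(m,1)$ is a theorem of Corlette and Gromov--Schoen (cited as \cite{GS} in the paper), not a hypothesis ``built into'' the class $\mathcal{X}$, and in rank one it is not due to Margulis.
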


\qquad 
This parametrization may be known to some experts, but it has not appeared in the literature.  
Theorem \ref{thrmC} is the quaternionic hyperbolic analogue of of Maclachlan's parametrization of commensurability classes of standard arithmetic real hyperbolic spaces \cite{Mac} and McReynold's parametrization of commensurability classes of  arithmetic complex hyperbolic spaces of  first type \cite[Chapter 5]{McRey2}.

\qquad 
We say a Lie group is \textbf{simple} if the complexification of its Lie algebra is simple (which is to say the algebraic $\R$-group corresponding to its adjoint group is absolutely simple).
In particular, in the case of groups of type $C_n$, it means $q+r=1$ and $s=0$.
Throughout this paper, $M$ will denote a locally symmetric space and $\widetilde{M}$ its globally symmetric universal cover.
A locally symmetric space $M$ is \textbf{simple} if $\mathrm{Isom}(\widetilde{M})$ is a simple Lie group, and we denote its field of definition  by $k(M)$ \cite[Section 7]{M14spec}.

\begin{mainthm}
\label{mainthm:dimfield}
Let $M_1$ and $M_2$ be simple finite volume locally symmetric orbifolds of type $C_{n_1}$ and $C_{n_2}$, respectively, where $n_i\ge 3$, and let $\overline{G}_i=\mathrm{Isom}(\widetilde{M}_i)$. 
If $\Q TG(M_1)=\Q TG(M_2)$, then
\begin{enumerate}[\qquad(a)]
\item $\overline{G}_1$ and $\overline{G}_2$ are isomorphic as Lie groups.  In particular, 
\begin{enumerate}[\ (i)]
\item $n_1=n_2=:n$,
\item $\mathrm{rank}\, M_1 = \mathrm{rank}\,  M_2 $,
\item $\dim M_1 = \dim M_2 $.
\end{enumerate}
\item Furthermore, if $n\ge 4$, then $k(M_1)$ and $k(M_2)$ are isomorphic.
\end{enumerate}
\end{mainthm}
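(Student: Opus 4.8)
The plan is to read off from $\Q TG(M)$ a short list of numerical invariants that pins down the isomorphism type of the simple Lie group $\overline{G}=\mathrm{Isom}(\widetilde{M})$ among the real forms of $C_n$, and then, when $n\ge 4$, to exhibit inside $M$ a totally geodesic subspace whose field of definition is $k(M)$ and which lies in the range where part (a) applies, so that matching spectra force the fields to agree.

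For part (a), I would begin from the classification of the totally geodesic subspaces of the symmetric space attached to each real form of $C_n$ --- the split form $\mathbf{Sp}_{2n}(\R)$, the quaternionic forms $\mathbf{Sp}(p,n-p)$, and the complex form $\mathbf{Sp}_{2n}(\C)$ --- which is assembled earlier in the paper. From it I would extract, for each flavor $\mathbb{F}\in\{\mathbb{R},\mathbb{C},\mathbb{H}\}$, the invariant $d_{\mathbb{F}}(M):=\max\{\dim N : [N]\in\Q TG(M),\ N\cong \mathbf{H}_{\mathbb{F}}^{j}\ \text{for some } j\}$, together with the maximal dimension $c(M)$ of a proper totally geodesic subspace of Killing--Cartan type $C_{n-1}$ of the same flavor as $M$. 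Because these maxima range over commensurability classes, and both dimension and Killing--Cartan type are commensurability invariants, each of these numbers is manifestly determined by $\Q TG(M)$; hence $\Q TG(M_1)=\Q TG(M_2)$ forces them to coincide. The remaining work is a finite computation with real groups: one checks that $d_{\mathbb{H}}$ equals $4\,\mathrm{rank}\,M$ on the quaternionic forms and takes a distinct value (indeed, degenerates) on the split and complex forms, that $c(M)$ is a strictly increasing function of $n$ within each flavor and so recovers $n$, and that $\dim M$ is then recovered as $c(M)$ plus the flavor-dependent codimension. In this way the profile $(d_{\mathbb{R}},d_{\mathbb{C}},d_{\mathbb{H}},c)$ determines the pair (real form, $n$) uniquely, giving $\overline{G}_1\cong\overline{G}_2$; assertions (i)--(iii) are then immediate, since $n$, rank, and dimension are invariants of the isomorphism type of $\overline{G}$.

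For part (b), I would transfer the field of definition onto a subspace that part (a) can see. Using the construction of Section \ref{section:construction}, I would produce inside each $M_i$ a finite-volume, simple, totally geodesic subspace $N_i$ of type $C_{n-1}$ --- arising from a generic corank-one subform of the defining skew-Hermitian $D$-form over the same field --- chosen so that $k(N_i)\cong k(M_i)$. When $n\ge 4$ we have $n-1\ge 3$, so $N_i$ lies within the scope of the present results and of the field-of-definition framework of \cite[Section 7]{M14spec}. Since $[N_1]\in\Q TG(M_1)=\Q TG(M_2)$, the class $[N_1]$ is realized by some totally geodesic $N'\subset M_2$ with $N'\sim_c N_1$; as the field of definition is a commensurability invariant and every totally geodesic subspace of the $k(M_2)$-defined orbifold $M_2$ has field of definition embedding in $k(M_2)$, we obtain $k(M_1)\cong k(N_1)\cong k(N')\hookrightarrow k(M_2)$. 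Running the same argument with the roles of $M_1$ and $M_2$ reversed gives the reverse embedding, and two number fields that embed in one another are isomorphic, so $k(M_1)\cong k(M_2)$.

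The step I expect to be the main obstacle is the construction in part (b) of a subspace that genuinely retains the whole field of definition: I must choose the corank-one subform so that the resulting $C_{n-1}$-group is not definable over a proper subfield of $k$, and so that its commensurability class is not also carried by subspaces descending from a smaller field. It is precisely this genericity --- rather than the finite distinctness check underlying part (a) --- that requires room inside the form, and it is here that the hypothesis $n\ge 4$ is essential: for $n=3$ the natural codimension-one subspaces have type $C_2\cong B_2$, which falls outside the range in which part (a) and the field-of-definition calculus are available, and the field can no longer be extracted from the spectrum in this manner.
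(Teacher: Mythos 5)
Your part (a) rests on a classification of totally geodesic subspaces that the paper does not contain and that you do not supply: Theorem \ref{totgeoclassification} classifies totally geodesic subspaces only of the rank one space $\mathbf{H}_{\mathbb{H}}^m$, so for the higher-rank forms $\mathbf{Sp}_{2n}(\R)$ and $\mathbf{Sp}(p,q)$, $pq>1$, the upper bounds your invariants $d_{\mathbb{F}}(M)$ and $c(M)$ require (that no flavored subspace exceeds the claimed maximum) amount to an unproved analysis of Lie triple systems, or at least of which $\mathbf{Sp}(j,1)$ embed compatibly in $\mathbf{Sp}(p,q)$. Worse, two of your claimed values are wrong: for $\mathbf{Sp}(p,n-p)$ the maximal quaternionic hyperbolic subspace has dimension $4\max(p,n-p)$ while $\mathrm{rank}\,M=\min(p,n-p)$, so $d_{\mathbb{H}}\neq 4\,\mathrm{rank}\,M$ whenever $p\neq n-p$; and $d_{\mathbb{H}}$ does not degenerate on the split form, since $\mathbf{Sp}(j,1)$ sits in $\mathbf{Sp}_{4(j+1)}(\R)$ (take the real or imaginary part of the complex symplectic form on $\mathbb{H}^{j+1}\cong\C^{2(j+1)}$), and this embedding can be realized $k$-rationally via $b(x,y)=\mathrm{Trd}(\epsilon\, h'(x,y))$ for a pure quaternion $\epsilon$, so split Siegel spaces of large enough rank contain finite-volume quaternionic hyperbolic subspaces. (Also, simplicity already excludes $\mathbf{Sp}_{2n}(\C)$.) The paper avoids all of this: Lemmas \ref{dim} and \ref{rank} extract $n$ and the $\R$-rank from subform subspaces together with localizations of the restriction-of-scalars groups at finite primes, and then quote \cite[Lemma 2.2]{LinowitzMeyer}, which says a simple real group of type $C_n$ is determined up to isogeny by its rank and $\R$-rank. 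The finite-place argument moreover detects the split/nonsplit dichotomy, which your real-Lie-theoretic profile cannot: a simple nonsplit-type space whose algebra $D$ splits at $v_0$ has the same $\overline{G}=\mathbf{PSp}_{2n}(\R)$ as a split-type space.

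Your part (b) is a genuinely different mechanism from the paper's and is largely viable: $k(N_i)=k(M_i)$ for a corank-one subform subspace is exactly the observation in Construction \ref{subformsubspacedef} (no genericity is needed — by \cite[Lemma 2.6]{PrasadRapinchukWC} the Vinberg trace field of the arithmetic lattice in $\mathbf{SU}(r)$ is $k$ regardless of whether the group descends to a subfield, so your flagged ``main obstacle'' is a non-issue), and your embedding claim $k(N')\hookrightarrow k(M_2)$ follows from the Borel density plus $\mathrm{Ad}$-trace argument that the paper runs in Proposition \ref{fieldofdefquathyp}, though you would have to carry it out for an arbitrary simple ambient space of type $C_n$, since that proposition is stated only for quaternionic hyperbolic $M$; mutual embeddings of number fields then force isomorphism. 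But your closing diagnosis of the hypothesis $n\ge 4$ is wrong on both counts. The paper uses $n\ge4$ precisely through the inequality $\dim\mathbf{G}=n(2n+1)<2(n-1)(2n-1)=2\dim\mathbf{H}$, which fails at $n=3$, and which permits citing \cite[Proposition 7.4]{M14spec} to get the \emph{equality} $k(N')=k(M_2)$ outright (for small subspaces the one-sided embedding genuinely can be strict — compare geodesic surfaces in arithmetic hyperbolic $3$-manifolds — which is why the half-dimension condition is needed for equality). Your two-sided embedding trick sidesteps equality entirely and, as written, never invokes $n\ge4$: it would apply verbatim at $n=3$, where the $C_2$ subform groups are still absolutely simple and the Vinberg framework still applies. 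So either you have quietly strengthened (b) beyond the paper's range — which you should have noticed and checked — or there is a hidden subtlety; in either case your paragraph asserting that $n\ge4$ is essential to your method contradicts your own argument.
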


\qquad 
Using the theory of Hermitian forms over division algebras over number fields \cite{Sch} and the classification of algebraic groups \cite{TitsClassification}, we are then able to prove the following theorem about locally symmetric spaces of type $C_n$.

\begin{mainthm}
\label{mainthm:commensurable}
Let $M_1$ and $M_2$ be finite volume locally symmetric spaces of type $C_{n_1}$ and $C_{n_2}$ respectively, where $n_i\ge 3$,
such that $k(M_1)$ and $k(M_2)$ are isomorphic.  
If $\Q TG(M_1)=\Q TG(M_2)$, then $M_1\sim_cM_2$.
\end{mainthm}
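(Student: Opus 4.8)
The plan is to convert the hypothesis $\Q TG(M_1)=\Q TG(M_2)$ into an isomorphism of the defining algebraic groups and then invoke the commensurability theory of arithmetic lattices. Each $M_i$ is the locally symmetric space of an irreducible arithmetic lattice and hence arises from an absolutely almost simple $k$-group $G_i$ of type $C_{n_i}$; after fixing an isomorphism $k(M_1)\cong k(M_2)=:k$, as the hypothesis permits, both are defined over the common field $k$. By the commensurability theory of arithmetic lattices in semisimple Lie groups (Margulis, Borel--Prasad), two such spaces with the same field of definition are commensurable if and only if the adjoint groups $G_1^{\mathrm{ad}}$ and $G_2^{\mathrm{ad}}$ are isomorphic as $k$-groups. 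Thus the entire theorem reduces to producing a $k$-isomorphism $G_1^{\mathrm{ad}}\cong G_2^{\mathrm{ad}}$ out of the equality of totally geodesic spectra.

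First I would pin down the algebraic data. By the Tits classification \cite{TitsClassification} (equivalently, the theory of central simple algebras with symplectic involution), each $G_i$ has the form $\mathbf{SU}(h_i)$, where $D_i$ is a quaternion $k$-algebra, its \emph{algebra of definition}, carrying its canonical involution, and $h_i$ is a nondegenerate Hermitian form of rank $n_i$ over $D_i$; moreover $G_i^{\mathrm{ad}}=\mathbf{PSU}(h_i)$, and $\mathbf{PSU}(h_1)\cong\mathbf{PSU}(h_2)$ over $k$ precisely when $D_1\cong D_2$ and $h_1,h_2$ are similar (that is, $h_2\cong\lambda h_1$ for some $\lambda\in k^\times$). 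Consequently it suffices to extract three invariants from the common spectrum: the rank $n$, the isomorphism class of $D$, and the similarity class of $h$.

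Next I would recover these invariants from $\Q TG(M_i)$. The equality $n_1=n_2=:n$ follows by comparing the maximal elements of the common spectrum: the totally geodesic subspaces of largest rank and dimension are those of type $C_{n-1}$ obtained from corank-one subforms, and the same rank and dimension bookkeeping underlying Theorem \ref{mainthm:dimfield}(a) forces their types, hence $n_1$ and $n_2$, to agree (in the non-simple case one compares the factor of largest rank). For the remaining invariants I would use the classification of totally geodesic subspaces established earlier in the paper: each nonflat totally geodesic subspace corresponds to a reductive $k$-subgroup of $G_i$, the subspaces of type $C_m$ correspond to the rank-$m$ subforms of $h_i$, and the complex and real hyperbolic subspaces correspond to subgroups arising from subfields of $D_i$ and from orthogonal involutions. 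The isomorphism class of $D_i$ is detected exactly as in the quaternionic hyperbolic parametrization (Theorem \ref{thrmC}), through the complex hyperbolic totally geodesic subspaces, which encode the ramification set of $D_i$; the local invariants of $h_i$, namely its signatures at the real places and its data at the finite places, are then read off from which subforms, and hence which commensurability classes of totally geodesic subspaces, occur. Finally, the local--global classification of Hermitian forms over a quaternion algebra over a number field \cite{Sch} shows that these invariants determine $h_i$ up to similarity, yielding $D_1\cong D_2$ together with the similarity of $h_1$ and $h_2$, and hence the desired $k$-isomorphism of adjoint groups.

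I expect the main obstacle to be this form-theoretic recovery step: proving that the commensurability classes of the totally geodesic subforms determine all local invariants of $h$ simultaneously, modulo the unavoidable scaling ambiguity. The two delicate points are separating the contribution of the algebra $D$ from that of the form $h$ at the ramified and finite places, and, in the non-simple case, ensuring that the signature at each individual real place and each finite local invariant is independently visible in the spectrum rather than only some aggregate. I would handle this by passing through a Jacobson-type correspondence attaching to $h$ a quadratic form over $k$, reducing the problem to the local--global theory of quadratic forms (Hasse--Minkowski) in the spirit of the type $B_n$ and $D_n$ analysis, where the analogous visibility of each Hasse invariant was the crux.
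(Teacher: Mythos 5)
Your global architecture matches the paper's proof: reduce commensurability to the isomorphism class of the pair $(D_i,\, h_i \text{ up to similarity})$, recover $n$ and the split/nonsplit dichotomy from the largest subspaces (the paper's Lemma \ref{dim} does this rigorously, without simplicity), recover $D$, and then recover $h$ up to similarity by local--global theory for Hermitian forms \cite{Sch}. But the proposal stops short of a proof at exactly the step you yourself flag as ``the main obstacle,'' and the fallback you sketch would not close it. First, at the finite places there is nothing to ``read off'': over a nonarchimedean local field there is a \emph{unique} Hermitian form of each rank over the quaternion division algebra (Proposition \ref{uniquelocalHermitian}), and where $D$ splits the form is the unique symplectic one, so once $D_1\cong D_2$ the entire problem concentrates at the real places where $D$ ramifies, where the invariant is the signature. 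Showing each signature is individually visible in the spectrum is where the paper actually works: it normalizes signatures via similarity \cite[Lemma 8.2]{M14spec}, counts isotropic places with subform subspaces, runs the signature argument of \cite[Theorem 8.8]{M14spec} for $n\ge5$, and treats $n=3$ and $n=4$ separately. Critically, at $n=4$ subform data alone cannot separate a real place of signature $(3,1)$ from one of signature $(2,2)$ --- both yield the same noncompact corank-one subform groups $\mathbf{Sp}(2,1)$ --- and the paper must invoke a maximal \emph{complex restriction} subspace (a type ${}^2A_3$ subgroup). Your proposed reduction ``to the local--global theory of quadratic forms via a Jacobson-type correspondence in the spirit of $B_n/D_n$'' has no counterpart for this: the spectrum hands you similarity classes of Hermitian subforms and restriction subspaces, not subforms of the trace form $q_h$, and the $n=4$ phenomenon shows that subform visibility genuinely fails, so the analogy with Hasse-invariant visibility does not carry the argument.

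There is a second gap in your recovery of $D$. You propose detecting $D_i$ through complex-hyperbolic-style subspaces and the maximal-subfield rigidity of quaternion algebras, i.e.\ the mechanism of Theorem \ref{mainthm:complex}; but the paper justifies that mechanism (Proposition \ref{parametrizecomplexhyp}: every $K\in\mathrm{Max}(D)$ is realized, and the subspace determines $K$) only for the quaternionic hyperbolic signature pattern, definite at all places but one of signature $(m,1)$. For a general type $C_n$ space with several isotropic real places, or with places where $D$ splits, both directions of that correspondence would have to be re-proved. The paper's proof of this theorem avoids the issue entirely: it recovers the ramification set of $D$ from a single maximal subform subspace by localizing, via $R_{k/\Q}(\mathbf{H}_1)(\Q_p)\cong\prod_{v\mid p}\mathbf{H}_1(k_v)$ together with \cite[Theorem 3.5]{M14spec}, which also produces the field automorphism $\tau$ of $k$ by which one must twist --- an ambiguity your fixed-$k$-isomorphism framework needs to confront but does not. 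In short, you have correctly identified the skeleton and the crux, but the crux is left open, and the stated plan for filling it fails at $n=4$.
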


\qquad 
In particular, Theorem \ref{mainthm:dimfield} and Theorem \ref{mainthm:commensurable} together imply (Corollary \ref{cor:bandc}) that the commensurability class of a simple finite volume locally symmetric space of type $C_n$, $n\ge 4$ is completely determined by its finite volume totally geodesic subspaces.

\qquad 
A quaternionic hyperbolic $4m$-orbifold $M$ has three types of proper totally geodesic subspaces:  quaternionic hyperbolic,  complex hyperbolic, and real hyperbolic (see Theorem \ref{totgeoclassification}).  Such a totally geodesic subspace is \textbf{maximal} if it attains dimension: $4m-4$, $2m$ and $m$, respectively.   We analyze the role that each type plays in determining the commensurability class of $M$.

\qquad We let $\mathcal{X}_{\mathbb{H}}$ denote the set of finite volume quaternionic hyperbolic orbifolds of dimension at least $8$ and let  $\mathcal{P}_{\mathbb{H}}$ (resp. $\mathcal{P}_{\mathbb{C}}$, $\mathcal{P}_{\mathbb{R}}$) be the property of being maximal quaternionic (resp.  complex,  real) hyperbolic.
(For example, in this notation, $\Q TG(M, \mathcal{P}_{\mathbb{C}})$ is the set of commensurability classes of maximal, finite volume, complex hyperbolic, totally geodesic subspaces of $M$.)

\begin{mainthm}
\label{mainthm:complex}
If $M_1,M_2\in \mathcal{X}_{\mathbb{H}}$ and either $\Q TG(M_1, \mathcal{P}_{\mathbb{C}})=\Q TG(M_2, \mathcal{P}_{\mathbb{C}})$ or $\Q TG(M_1, \mathcal{P}_{\mathbb{H}})=\Q TG(M_2, \mathcal{P}_{\mathbb{H}})$, then $M_1\sim _cM_2$.
\end{mainthm}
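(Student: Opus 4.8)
The plan is to reduce the statement to the parametrization of Theorem~\ref{thrmC}. Since a finite volume quaternionic hyperbolic $4m$-orbifold $M$ with $m\ge 2$ corresponds bijectively to an admissible triple $(k,v_0,D)$, and this correspondence is exactly what certifies commensurability, it suffices to prove that each of the two hypotheses, equality of $\Q TG(\cdot,\mathcal{P}_{\mathbb{H}})$ or equality of $\Q TG(\cdot,\mathcal{P}_{\mathbb{C}})$, forces the triples of $M_1$ and $M_2$ to coincide. I would therefore begin by reading off, from the construction in Section~\ref{section:construction} and the classification of totally geodesic subspaces (Theorem~\ref{totgeoclassification}), precisely which arithmetic datum each kind of maximal subspace records.

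The quaternionic case is the more direct of the two. A maximal quaternionic hyperbolic totally geodesic subspace $N\subset M$ has dimension $4(m-1)$ and, arising from an $\mathbf{Sp}(m-1,1)$-subgroup defined over $k$ using the same quaternion algebra $D$, is itself a quaternionic hyperbolic orbifold carrying the admissible triple $(k,v_0,D)$. In particular all such $N$ are mutually commensurable, so $\Q TG(M,\mathcal{P}_{\mathbb{H}})$ is a single class. When $m\ge 3$ this class lies in dimension $4(m-1)\ge 8$, so Theorem~\ref{thrmC} applies there and recovers $(k,v_0,D)$ from it; equality of spectra then yields equal triples, and a second application of Theorem~\ref{thrmC} in dimension $4m$ gives $M_1\sim_c M_2$. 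The boundary case $m=2$, where $N\cong\mathbf{H}_{\mathbb{H}}^1$ and the exceptional isomorphism $\mathbf{Sp}(1,1)\cong\mathbf{Spin}(4,1)$ intervenes, I would handle by hand, checking that the commensurability class of the resulting real hyperbolic $4$-orbifold still determines $(k,v_0,D)$ through the norm form of $D$.

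The complex case is where I expect the real work to lie. A maximal complex hyperbolic totally geodesic subspace has real dimension $2m$ and arises by restricting a compatible complex structure on $\mathbf{H}_{\mathbb{H}}^m$; arithmetically this amounts to choosing a maximal subfield $\ell\subset D$, which must be a CM extension of $k$ because $D$ is ramified at every real place. The induced subspace is a complex hyperbolic $m$-orbifold, and its commensurability data remembers $\ell$. The crux is to show that the family of such $\ell$ that occurs determines $D$. For this I would invoke the embedding theorem for quaternion algebras: a CM field $\ell/k$ embeds in $D$ exactly when no finite place of $k$ ramifying in $D$ splits in $\ell$. Consequently a finite place $v$ ramifies in $D$ if and only if every $\ell$ arising in $\Q TG(M,\mathcal{P}_{\mathbb{C}})$ is nonsplit at $v$, so this spectrum detects the ramification set of $D$, and by the local--global classification of quaternion algebras over $k$ the ramification set determines $D$ up to isomorphism. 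The totally real field $k$ is recovered as the common maximal totally real subfield of the occurring $\ell$, and $v_0$ as the unique place at which the signature is noncompact.

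The technical heart is thus a realization statement: I must ensure that \emph{enough} CM subfields of $D$ genuinely arise as fields of the maximal complex subspaces, so that the ramification set is pinned down exactly rather than merely bounded from outside. Concretely, for each finite place $v$ unramified in $D$ I need a CM field $\ell/k$ that splits at $v$, is nonsplit at every ramified finite place (hence embeds in $D$), and carries a Hermitian form producing an admissible complex hyperbolic subspace of the correct signature; a weak-approximation argument, parallel to the constructions already used to build the $\mathbf{Sp}(m,1)$-lattices in Section~\ref{section:construction}, should produce these. Granting this, $\Q TG(M_1,\mathcal{P}_{\mathbb{C}})=\Q TG(M_2,\mathcal{P}_{\mathbb{C}})$ forces identical ramification sets and hence $D_1\cong D_2$; combined with the recovered $k$ and $v_0$ this gives $(k_1,v_{0,1},D_1)=(k_2,v_{0,2},D_2)$, whence $M_1\sim_c M_2$ by Theorem~\ref{thrmC}.
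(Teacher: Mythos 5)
Your proposal is correct in its overall strategy and lands on the same skeleton as the paper: recover the admissible triple $(k,v_0,D)$ from the spectrum and conclude via Theorem~\ref{thrmC}. But the two halves are executed differently, and the comparison is instructive. In the complex case, the paper proves the strong realization statement outright (Proposition~\ref{parametrizecomplexhyp}: \emph{every} first-kind complex hyperbolic orbifold attached to \emph{every} $K\in\mathrm{Max}(D)$ embeds up to commensurability, proved by extending the Hermitian form from $K$ to $D=K\oplus K\mu$ and invoking local--global uniqueness of trace forms via Lemma~\ref{localforms}), and then simply cites the fact that $\mathrm{Max}(D)$ determines $D$ \cite[Remark 5.4]{PrasadRapinchukWC}\cite{M14div}; your plan instead re-proves that citation inline by detecting $\mathrm{Ram}(D)$ through the Albert--Brauer--Hasse--Noether embedding criterion and a weak-approximation supply of CM fields split at a given unramified place. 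That buys self-containedness and only needs a weaker realization statement, but the realization step you correctly flag as the technical heart is exactly what Proposition~\ref{parametrizecomplexhyp} supplies, so you have identified rather than discharged it; note also that your phrase ``its commensurability data remembers $\ell$'' is silently using the analogue of Proposition~\ref{fieldofdefquathyp}(2) (proved in the paper via Borel density), namely that every maximal complex totally geodesic subspace is arithmetic of the first kind relative to some $K\in\mathrm{Max}(D)$ with field of definition $k$. In the quaternionic case the divergence is sharper: the paper never applies Theorem~\ref{thrmC} to the subspace; instead Proposition~\ref{fieldofdefquathyp}(3) shows directly that any quaternionic totally geodesic subspace satisfies $k(N)=k(M)$ and $D(N)=D(M)$, and since these are commensurability invariants of $N$ the triples of $M_1$ and $M_2$ agree uniformly in $m\ge2$, with no case analysis. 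Your inductive use of Theorem~\ref{thrmC} in dimension $4(m-1)$ is clean for $m\ge3$ but forces the $m=2$ hand-check through $\mathbf{Sp}(1,1)\cong\mathbf{Spin}(4,1)$; the claim you would need there is true (the commensurability class of the arithmetic real hyperbolic $4$-orbifold determines the similarity class of a $5$-dimensional quadratic form over $k$, whose even Clifford algebra recovers $D$), but it is genuinely nontrivial and is the one spot where your sketch leaves real work that the paper's route avoids entirely.
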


\qquad 
In other words, Theorem \ref{mainthm:complex} says that the complex hyperbolic (resp. quaternionic hyperbolic) totally geodesic subspaces determine a commensurability class.  
In contrast, real hyperbolic subspaces fail to determine a commensurability class.

\begin{mainthm}
\label{mainthm:real}
For every $s\in \mathbb{Z}_{>1}$, there exists a family of pairwise noncommensurable spaces $M_1, M_2,\ldots, M_s\in \mathcal{X}_{\mathbb{H}}$ such that $\Q TG(M_i, \mathcal{P}_{\mathbb{R}})=\Q TG(M_j, \mathcal{P}_{\mathbb{R}})$ for all $1\le i,j\le s$.
\end{mainthm}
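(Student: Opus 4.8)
The plan is to leverage the parametrization of Theorem \ref{thrmC}. Since commensurability classes of finite volume quaternionic hyperbolic $4m$-orbifolds ($m\ge 2$) are in bijection with equivalence classes of admissible triples $(k,v_0,D)$, it suffices to exhibit, for each $s$, a collection of $s$ pairwise inequivalent admissible triples whose orbifolds have identical maximal real hyperbolic spectra. The guiding principle is that the finite ramification of $D$ governs the commensurability class of the ambient quaternionic orbifold but is invisible to its real hyperbolic totally geodesic subspaces; so I would hold $k$, $v_0$, and the coefficients of the defining Hermitian form fixed while varying only the finite ramification of $D$.

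Concretely, I would first fix a totally real field $k$, a real place $v_0$, and elements $a_1,\dots,a_{m+1}\in k^\times$ whose signs under the various real embeddings are chosen so that $\langle a_1,\dots,a_{m+1}\rangle$ has signature $(m,1)$ at $v_0$ and is definite at every other real place; only finitely many sign conditions are imposed, so such elements exist. I would then select quaternion division algebras $D_1,\dots,D_s$ over $k$, each ramified at every infinite place but with pairwise distinct finite ramification. Such algebras form an infinite family (the set of ramified places need only have even cardinality), and since $\mathrm{Aut}(k)$ is finite I can arrange that the triples $(k,v_0,D_i)$ are pairwise inequivalent; by Theorem \ref{thrmC} the associated orbifolds $M_1,\dots,M_s\in\mathcal{X}_{\mathbb{H}}$ are then pairwise noncommensurable. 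Each $M_i$ is constructed from the Hermitian form $h_i=\langle a_1,\dots,a_{m+1}\rangle$ over $(D_i,\text{canonical involution})$, whose signature at every infinite place is independent of $i$.

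It remains to compute $\Q TG(M_i,\mathcal{P}_{\mathbb{R}})$ and verify it does not depend on $i$. Appealing to the classification in Theorem \ref{totgeoclassification}, the maximal real hyperbolic totally geodesic subspaces of $M_i$ are the arithmetic real hyperbolic $m$-orbifolds attached to $k$-subgroups $\mathbf{SO}(q)\hookrightarrow \mathbf{SU}(h_i,D_i)$, and such a subgroup exists exactly when $q$ is a rank-$(m+1)$ quadratic form over $k$ with $q\otimes_k D_i\cong h_i$ as Hermitian forms. By Jacobson's correspondence for Hermitian forms over quaternion algebras \cite{Sch}, this isometry is detected by the associated trace forms over $k$, so the admissible $q$ --- and in particular their similarity classes, which control the commensurability classes of the resulting real hyperbolic orbifolds --- are determined by $\langle a_1,\dots,a_{m+1}\rangle$ and the field $k$ alone. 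Since these data are shared by all the $M_i$, each $M_i$ has the same maximal real hyperbolic spectrum, and the theorem follows.

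The hard part will be the final step: making precise that the passage from the quaternionic data to its real hyperbolic subspaces genuinely forgets the finite ramification of $D$. Concretely, I must show both that Jacobson's correspondence pins down the admissible forms $q$ up to the similarity that commensurability permits, independently of the ramification of $D_i$, and that no additional maximal real hyperbolic subspaces arise from non-diagonal $k$-subspaces of $D_i^{m+1}$ or from commensuration. This is precisely the asymmetry with Theorem \ref{mainthm:complex}: a complex hyperbolic subspace records a quadratic subfield embedded in $D$, and hence enough local ramification to recover the commensurability class, whereas a real hyperbolic subspace records only the quadratic form $\langle a_1,\dots,a_{m+1}\rangle$ over $k$.
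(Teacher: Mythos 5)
Your proposal is correct and takes essentially the same route as the paper: fix $(k,v_0)$ and the infinite signature data, vary $D$ among pairwise nonisomorphic (indeed pairwise inequivalent, as you note via finiteness of $\mathrm{Aut}(k)$) quaternion algebras ramified at all infinite places so that Theorem \ref{thrmC} yields noncommensurability, and then use the trace-form computation of Lemma \ref{localforms} together with the local--global principle and Scharlau's theorem --- your ``Jacobson's correspondence'' step --- to see that the real hyperbolic spectrum depends only on $k$ and the signatures, exactly as in Propositions \ref{fieldofdefquathyp} and \ref{parametrizerealhyp} and Corollary \ref{cor:oneintoall}. The ``hard part'' you flag (that no exotic maximal real hyperbolic subspaces arise beyond restrictions of $h$ to $k$-rational totally real subspaces) is precisely what the paper settles with the Borel-density argument in Proposition \ref{fieldofdefquathyp}.
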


\qquad 
Theorems \ref{mainthm:complex} and \ref{mainthm:real} can be thought of as an orbifold analogue of fact that a quaternion division algebra over a number field is determined by its collection of maximal subfields, but not by its center.

\qquad 
A \textbf{surface group} is the fundamental group of a closed surface (that is not the sphere).
Understanding if and how surface groups sit inside certain word-hyperbolic groups has been a major line of inquiry in recent years, and even played a prominent role in resolving Thurston's virtual Haken conjecture \cite{KahnMarkovic} \cite{BergeronWise} \cite{Agol}.
Gromov has conjectured that every one-ended word-hyperbolic group contains a surface subgroup.
In \cite{Hamenstadt}, it was shown that cocompact lattices in rank one simple Lie groups of noncompact type distinct from $\mathbf{SO}(2m,1)$, $m\ge1$, contain surface subgroups.
A consequence of the results of \cite{M14spec} is that arithmetic cocompact lattices in groups of type $\mathbf{SO}(2m,1)$, $m\ge2$, contain quasiconvex surface subgroups.
We use our techniques to prove the following.

\begin{mainthm}
\label{mainthm:surfacesubgroup}
If $\Gamma<\mathbf{Sp}(m,1)$, $m\ge 2$, is a cocompact lattice, then $\Gamma$ contains quasiconvex surface subgroups.
\end{mainthm}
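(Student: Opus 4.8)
The plan is to reduce the statement to an explicit arithmetic subform computation and then feed the output into the geometry of negatively curved spaces. First I would invoke superrigidity: since $m \ge 2$, the archimedean superrigidity theorem of Corlette together with the non-archimedean superrigidity of Gromov--Schoen shows that every lattice $\Gamma < \mathbf{Sp}(m,1)$ is arithmetic. Hence, by the explicit construction of Section~\ref{section:construction} and the parametrization of Theorem~\ref{thrmC}, $\Gamma$ is commensurable to the $\mathcal{O}_k$-points of $\mathbf{SU}(h)$, where $h$ is a rank-$(m+1)$ Hermitian form over a quaternion division algebra $D$ over a totally real field $k$ (ramified at every infinite place), of signature $(m,1)$ at the distinguished place $v_0$ and positive definite at every other real place. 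Cocompactness of $\Gamma$ forces, via Godement's criterion, the form $h$ to be anisotropic over $D$.

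Next I would manufacture a totally geodesic real hyperbolic plane. Diagonalizing $h = \langle a_0, \dots, a_m \rangle$ with $a_i \in k^{\times}$ (the standard involution on $D$ fixes exactly the center $k$), the restriction of $h$ to the $k$-span of the standard basis vectors is the quadratic form $q = \langle a_0, \dots, a_m \rangle$ over $k$. Anisotropy of $h$ over $D$ immediately forces $q$ to be anisotropic over $k$, and tracking signatures shows that $q$ has signature $(m,1)$ at $v_0$ and is positive definite at every other real place. Since $m \ge 2$, the $v_0$-signature $(m,1)$ has at least two positive and exactly one negative entry, so I may select a ternary subform $q' = \langle a_i, a_j, a_\ell \rangle$ of signature $(2,1)$ at $v_0$; it remains anisotropic over $k$ and positive definite at all other real places. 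The inclusion $\mathbf{SO}(q') \hookrightarrow \mathbf{SU}(h)$ is defined over $k$ and, by the classification of Theorem~\ref{totgeoclassification}, realizes a totally geodesic copy of $\mathbf{H}_{\mathbb{R}}^2 \subset \mathbf{H}_{\mathbb{H}}^m$.

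I would then extract the surface group. Because $q'$ is anisotropic over $k$ while $\mathbf{SO}(q')(k_{v_0}) \cong \mathbf{SO}(2,1)$ and $\mathbf{SO}(q')$ is compact at every other real place, Godement's criterion gives that $\mathbf{SO}(q')(\mathcal{O}_k)$ is a cocompact arithmetic lattice in $\mathbf{SO}(2,1)$, that is, a cocompact Fuchsian group. By Selberg's lemma a finite-index subgroup $\Sigma$ is torsion-free, hence the fundamental group of a closed hyperbolic surface; replacing $\Gamma$ and $\Sigma$ by commensurable finite-index subgroups places $\Sigma$ inside $\Gamma$ as a surface subgroup. When $m \ge 4$ one could instead choose an even-dimensional anisotropic subform of signature $(2m',1)$ with $m' \ge 2$ and obtain quasiconvex surface subgroups directly from the $\mathbf{SO}(2m',1)$ consequence of \cite{M14spec}; the ternary route has the advantage of working uniformly for all $m \ge 2$.

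Finally, quasiconvexity is geometric. The copy of $\mathbf{H}_{\mathbb{R}}^2$ is totally geodesic, hence convex, in the negatively curved space $\mathbf{H}_{\mathbb{H}}^m$, which is $\mathrm{CAT}(-1)$ after rescaling and in particular Gromov hyperbolic, and $\Sigma$ acts cocompactly on this convex set. A group acting cocompactly on a convex subset of a $\mathrm{CAT}(-1)$ space is convex-cocompact, and therefore quasiconvex with respect to the geometric action of $\Gamma$; since quasiconvexity passes to and from finite-index subgroups, the surface subgroup produced above is quasiconvex in $\Gamma$. I expect the genuine obstacle to be the reduction to explicit arithmetic data: the superrigidity input is deep but external, and once it is in hand everything is a clean descent, the only care point being the signature bookkeeping that guarantees an anisotropic ternary subform of signature $(2,1)$, which is precisely what ensures the resulting totally geodesic surface is uniform rather than accidentally non-cocompact.
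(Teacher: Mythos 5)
Your proposal is correct, and its endgame (\v{S}varc--Milnor plus convexity of a totally geodesic subspace in a space of pinched negative curvature) coincides with the paper's; the arithmetic middle, however, takes a genuinely different and more self-contained route. The paper does not build the surface out of the given form directly: it first shows (Lemma \ref{compactcondition2}) via the strong Hasse principle that compactness of $M$ forces $k(M)\neq\Q$, and then uses the local invariants of trace forms (Lemma \ref{localforms}) together with local--global uniqueness \cite[Theorem 66:4]{OM} and \cite[Theorem 10.1.7]{Sch} to prove (Proposition \ref{parametrizerealhyp}, Corollary \ref{parametrizerealhyp2}) that \emph{every} standard arithmetic real hyperbolic surface with field of definition $k$ embeds up to commensurability as a totally geodesic subspace of $M$; compactness of the resulting surface is then automatic from Lemma \ref{compactcondition}, precisely because $k\neq\Q$. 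You instead construct one surface internally: diagonalize the anisotropic $h$, pass to a ternary restriction subform $q'$ of signature $(2,1)$ at $v_0$ (possible since $m\ge2$), observe that anisotropy is inherited (a $k$-isotropic vector of $q'$ is already an isotropic vector of $h$), and apply Godement directly to get a cocompact Fuchsian group; this sidesteps the Hermitian local--global machinery and the $k\neq\Q$ detour entirely, since anisotropy over $k$ is inherited rather than re-derived, and your argument is uniform in $m\ge2$ without case analysis. What the paper's heavier route buys is the stronger conclusion advertised in Section 7 --- \emph{infinitely many} commensurability classes of quasiconvex surface subgroups, since every arithmetic surface with field of definition $k$ occurs --- together with the orientability statement of Proposition \ref{realhyperbolicsurface} and machinery reused for the nonuniform case (Theorem \ref{thm:relhyp}); what your route buys is economy, needing only diagonalization, inheritance of anisotropy by subforms, and Godement. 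Two points you compress but which are standard and handled in the paper: the compatibility of the arithmetic subgroup $\mathbf{SO}(q')(\mathcal{O}_k)$ with $\pi(G_L)$ (the lattice compatibility noted in Construction \ref{restrictionsubspacedef}), and the ``commensurable up to $G$-automorphism'' wrinkle when intersecting your $\Sigma$ with the given $\Gamma$, where one should note that a finite-index subgroup of a closed surface group is again a closed surface group.
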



\qquad 
In fact, in Theorem \ref{thm:relhyp} we show that all nonuniform lattices in $\mathbf{Sp}(m,1)$ contain surface subgroups as well.  It should also be noted that the methods used in these theorems are arithmetic in nature, and hence very different from the methods of \cite{Hamenstadt}.

\phantomsection
\label{sectionack}
\vspace{0.5pc}
\noindent\textbf{Acknowledgments.}
We would like to thank Matthew Stover for initially bringing this problem to our attention and giving valuable feedback on the early versions of this paper.  
We also would like to thank Lucy Lifschitz, Benjamin Linowitz,  and D.B. McReynolds for many useful and interesting discussions.
\vspace{0.5pc}

\section{Quaternionic Hyperbolic Geometry}

\qquad 
As compared to real hyperbolic space, $\mathbf{H}_{\mathbb{R}}^m$, and complex hyperbolic space, $\mathbf{H}_{\mathbb{C}}^m$, there is relatively little in the literature about $4m$-dimensional quaternionic hyperbolic space $\mathbf{H}_{\mathbb{H}}^m$ (\cite[\S19]{Mostow}, \cite{KPar}, \cite{KPan}).
For the reader unfamiliar with $\mathbf{H}_{\mathbb{H}}^m$ we devote this section to describing its geometry.
In particular, in Theorem \ref{totgeoclassification} we give a classification of the totally geodesic subspaces of $\mathbf{H}_{\mathbb{H}}^m$ that will be important in later sections.
It should be noted that, while the theory in many ways is similar to that of complex hyperbolic spaces, much greater care must be taken in definitions and computations due to the noncommutativity of the quaternions.

\subsection{Notation}
Throughout this section, we shall use the following notation.  Let
\begin{itemize}
\item $\mathbb{H}=\left(\frac{-1,-1}{\R}\right)$  be Hamilton's quaternions over $\R$.
\item $\overline{\alpha}$ be the conjugate of $\alpha\in \mathbb{H}$.
\item $V=\mathbb{H}^{m+1}$ denote the $m+1$-dimensional $\mathbb{H}$-vector space with both left and right $\mathbb{H}$-action.
\item $h$  denote the canonical Hermitian form on $V$ with signature $(m,1)$. i.e., for $\mathbf{v}, \mathbf{w}\in V$, $$h(\mathbf{v},\mathbf{w})=\sum_{i=1}^{m}\overline{v_i}w_i - \overline{v_{m+1}}w_{m+1}$$
where $\mathbf{v}=(v_1,v_2,\ldots, v_{m+1})$ and $\mathbf{w}=(w_1,w_2,\ldots, w_{m+1})$.
\item $\mathbb{H}^{m,1}$ denote the Hermitian pair $(V, h)$.
\item $[\mathbf{v}]$ denote the set $\{\mathbf{v}\alpha\}$ where $\mathbf{v}\in V$ is fixed and $\alpha$ ranges over $\mathbb{H}$.  We call $[\mathbf{v}]$ an $\mathbb{H}$-line.
\item $\mathbb{P}(V)=\{[\mathbf{v}]\ | \ \mathbf{v}\in V\}$.
\item $\mathbf{v}\in V$ (resp. $[\mathbf{v}]\in \mathbb{P}(V)$) be called a \textbf{negative vector} (resp. \textbf{negative line}) if $h(\mathbf{v},\mathbf{v})<0$.
\item $\mathbf{GL}_{m+1}(\mathbb{H})$ be the set of invertible $(m+1)\times (m+1)$ matrices with entries in $\mathbb{H}$.  This can naturally be identified with invertible right-$\mathbb{H}$-linear maps of $V$, $T(\mathbf{v}\alpha)=T(\mathbf{v})\alpha$.
\item $\mathbf{Sp}(m,1)=\{A\in \mathbf{GL}_{m+1}(\mathbb{H})\ | \ h(A\mathbf{v},A\mathbf{w})=h(\mathbf{v},\mathbf{w}) \mbox{ for all } \mathbf{v},\mathbf{w}\in \mathbb{H}^{m,1}\}$.
If we let $$H=\begin{pmatrix} I_{m\times m} & 0 \\ 0 & -1\end{pmatrix},$$ then this amounts to  
$$\mathbf{Sp}(m,1)=\{A\in \mathbf{GL}_{m+1}(\mathbb{H})\ | \ {}^T\overline{A}HA=H\}.$$
\item $\mathfrak{sp}(m,1)=\{A\in \mathfrak{gl}_{m+1}(\mathbb{H})\ | \ H({}^T\overline{A})H+A=0\}$.\\
\item $\mathfrak{sp}(m,1)=\mathfrak{k}\oplus \mathfrak{p}$ is the standard Cartan decomposition of $\mathfrak{sp}(m,1)$ where
$$\mathfrak{k}:=\left\{ \begin{bmatrix} X& \mathbf{0}\\ {\mathbf{0}} & Y\end{bmatrix} \  \bigg| \  X\in \mathfrak{sp}(m), Y\in \mathfrak{sp}(1)\right\}
\qquad \mbox{and} \qquad
\mathfrak{p}:=\left\{ \begin{bmatrix} 0_{m\times m}& \mathbf{v}\\ {}^T\overline{\mathbf{v}} & 0\end{bmatrix} \  \bigg| \  \mathbf{v}\in \mathbb{H}^m\right\}.
$$
\end{itemize}

\subsection{\textbf{Models of $\mathbf{H}_{\mathbb{H}}^m$.}}
As for real and complex hyperbolic space, there are many models for quaternionic hyperbolic space.  We identify three models here.  

\begin{enumerate}[\quad(M1)]
\item The \textbf{negative $\mathbb{H}$-line model} is the set $\mathbf{L}_{\mathbb{H}}^m:=\left\{[\mathbf{v}]\in \mathbb{P}(V)\ | \ h(\mathbf{v},\mathbf{v})<0 \right\}$.

\item The \textbf{ball model} is the set $\mathbf{B}_{\mathbb{H}}^m:=\left\{\mathbf{v}=(v_1,v_2,\ldots,v_{m},1)\in \mathbb{H}^{m,1}\ \bigg| \  \sum_{i=1}^m \overline{v_i}v_i<1 \right\}.$

\item The \textbf{homogenous space model}  is the quotient 
$(\mathbf{Sp}(m).\mathbf{Sp}(1))\backslash \mathbf{Sp}(m,1)$.
\end{enumerate}

\qquad
It is not hard to see that these spaces are all diffeomorphic.
The projectivisation map sending $\mathbf{v}$ to $[\mathbf{v}]$ is a diffeomorphism between $\mathbf{B}_{\mathbb{H}}^m$ and $\mathbf{L}_{\mathbb{H}}^m$.
The group $\mathbf{Sp}(m,1)$ acts transitively on negative lines in $\mathbb{H}^{m,1}$ \cite[\S20]{Mostow}
and $K:=\mathbf{Sp}(m).\mathbf{Sp}(1) =\begin{pmatrix} \mathbf{Sp}(m) & 0 \\ 0 & \mathbf{Sp}(1)\end{pmatrix}\subset \mathbf{Sp}(m,1)$ is the stabilizer of the negative line $[\mathbf{0},1]$.  
The continuous bjiection from $(\mathbf{Sp}(m).\mathbf{Sp}(1))\backslash \mathbf{Sp}(m,1)$ to  $\mathbf{L}_{\mathbb{H}}^m$ is in fact a homeomorphism, and hence a diffeomorphism \cite[II.4.3]{H}.

\subsection{\textbf{Riemannian and Quaternion-Hermitian Metrics on $\mathbf{H}_{\mathbb{H}}^m$}}
There are two Riemannian metrics of interest on $\mathbf{H}_{\mathbb{H}}^m$: the \textbf{hyperbolic metric}, which has sectional curvature bounded between $-1$ and $-\frac{1}{4}$, and the \textbf{Killing  metric}, which is the induced metric from the Lie group $\mathbf{Sp}(m,1)$.  
Both of these Riemannian metrics yield $\mathbf{Sp}(m,1)$-invariant distance formulas, and it is known that such metrics on $\mathbf{H}_{\mathbb{H}}^m$ are unique up to scaling \cite[\S20]{Mostow}.
In this subsection we describe these two metrics and explicitly compute the scaling factor between them.

\subsubsection{The Hyperbolic Metric}
For a negative vector $\mathbf{v}\in \mathbb{H}^{m,1}$, the tangent space $T_{[\mathbf{v}]}(\mathbf{L}_{\mathbb{H}}^m)$ can be identified with the vector subspace 
$[\mathbf{v}]_{\mathbf{v}}^\perp:=\{\mathbf{w}\in T_{\mathbf{v}}(\mathbb{H}^{m,1})\cong\mathbb{H}^{m,1}\ | \ h(\mathbf{w},\mathbf{v})=0\}$
by scaling by the \textbf{norm} $||\mathbf{v}||:=\sqrt{-h(\mathbf{v},\mathbf{v})}$ of $\mathbf{v}$, 
i.e. there is a natural map
$$\rho_{\mathbf{v}}:[\mathbf{v}]_{\mathbf{v}}^\perp \to T_{[\mathbf{v}]}(\mathbf{L}_{\mathbb{H}}^m),$$
$$\mathbf{w}\mapsto\frac{\mathbf{w}}{||\mathbf{v}||}.$$

Via these identifications, the Hermitian form $h$ naturally defines a quaternion-Hermitian metric on $\mathbf{L}_{\mathbb{H}}^m$ as we now show.
Let $L\in \mathbf{L}_{\mathbb{H}}^m$ and let $W_1, W_2\in T_{L}(\mathbf{L}_{\mathbb{H}}^m)$.  By the above remarks, there exists a negative vector $\mathbf{v}$ such that $L=[\mathbf{v}]$ and tangent vectors $\mathbf{w}_1, \mathbf{w}_2\in[\mathbf{v}]_{\mathbf{v}}^\perp$ such that
$\rho_{\mathbf{v}}(\mathbf{w}_1)=W_1$ and $\rho_{\mathbf{v}}(\mathbf{w}_2)=W_2$.  
Then the quaternion-Hermitian metric at $L$ is given by
\begin{align}
\notag \langle W_1,W_2\rangle_L&:=\langle \mathbf{w}_1,\mathbf{w}_2\rangle_{\mathbf{v}}\\
\notag&:= 4h\left(\frac{\mathbf{w}_1}{\sqrt{-h(\mathbf{v},\mathbf{v})}},\frac{\mathbf{w}_2}{\sqrt{-h(\mathbf{v},\mathbf{v})}}\right)\\
\label{eqn:hypmetric}&=\frac{-4h(\mathbf{w}_1,\mathbf{w}_2)}{h(\mathbf{v},\mathbf{v})}.
\end{align}
The real part of this form defines a Riemannian metric $g$ on $\mathbf{L}_{\mathbb{H}}^m$ which we call the \textbf{hyperbolic metric}, i.e., 

\begin{align}\label{eqn:hypmetric2}
g_L(W_1,W_2):=\mathrm{Re}(\langle W_1,W_2\rangle_L).
\end{align}

\qquad More generally, arbitrary vectors $\mathbf{w}_1, \mathbf{w}_2\in T_{\mathbf{v}}(\mathbb{H}^{m,1}) \cong \mathbb{H}^{m,1}$ represent tangent vectors in $T_{[\mathbf{v}]}(\mathbf{L}_{\mathbb{H}}^m)$ by projecting onto $[\mathbf{v}]_{\mathbf{v}}^\perp$ and we define 
\begin{align}
\notag  \langle \mathbf{w}_1,\mathbf{w}_2\rangle_{\mathbf{v}}&:= \frac{-4h(\mathbf{w}_1-\mathrm{proj}_{\mathbf{v}}\mathbf{w}_1,\mathbf{w}_2-\mathrm{proj}_{\mathbf{v}}\mathbf{w}_2)}{h(\mathbf{v},\mathbf{v})} \\ 
\label{metricrep}&=-4\left(\frac{h(\mathbf{v},\mathbf{v})h(\mathbf{w}_1,\mathbf{w}_2)-h(\mathbf{w}_1,\mathbf{v})h(\mathbf{v},\mathbf{w}_2)}{h(\mathbf{v},\mathbf{v})^2}\right)
\end{align}

where $\mathrm{proj}_{\mathbf{v}}\mathbf{w}$ is the projection of $\mathbf{w}$ onto $[\mathbf{v}]$ given by

$$\mathrm{proj}_{\mathbf{v}}\mathbf{w}=\mathbf{v}\frac{h(\mathbf{v},\mathbf{w})}{h(\mathbf{v},\mathbf{v})}.$$

This is similar to the complex hyperbolic case (see \cite[\S2]{Epstein}) but note that we have normalized the metric to fix the sectional curvatures between $-1$ and $-\frac{1}{4}$.

\qquad This metric gives the \textbf{hyperbolic distance} between two negative vectors  $\mathbf{v}_1,\mathbf{v}_2\in \mathbb{H}^{m,1}$ via the following formula 

\begin{align}\label{eqn:hypdist}
\mathrm{dist}(\mathbf{v}_1,\mathbf{v}_2)=2\cosh^{-1}\left( \sqrt{\frac{h(\mathbf{v}_1,\mathbf{v}_2)h(\mathbf{v}_2,\mathbf{v}_1)}{h(\mathbf{v}_1,\mathbf{v}_1)h(\mathbf{v}_2,\mathbf{v}_2)}}\right).
\end{align}

This induces well defined compatible distance functions on both $\mathbf{B}_{\mathbb{H}}^m$ and $\mathbf{L}_{\mathbb{H}}^m$.
Furthermore, it is clear that this distance formula is $\mathbf{Sp}(m,1)$-invariant.
Again, as in the complex hyperbolic case (see \cite[3.1.7]{Goldman}), this normalization will fix the sectional curvatures between $-1$ and $-\frac{1}{4}$.  

\subsubsection{The Killing Metric}
 The tangent space $T_{[\mathbf{0},1]}(\mathbf{L}_{\mathbb{H}}^m)$ is naturally identified with the ``horizontal subspace'' $V_0:=\{(\mathbf{w},0)\}\subset T_{(\mathbf{0},1)}(\mathbb{H}^{m,1})\cong \mathbb{H}^{m,1}$ which in turn is naturally identified with the $\mathbb{H}$-subspace $\mathfrak{p}$ coming from the Cartan decomposition of $\mathfrak{sp}(m,1)$ via the map
$$T:V_0\to \mathfrak{p}\qquad \mathbf{w}\mapsto \begin{bmatrix} 0_{n\times n}& \mathbf{w}\\ {}^T\overline{\mathbf{w}} & 0\end{bmatrix}.$$
%
%
The adjoint representations of $\mathbf{Sp}(m,1)$ and $\mathfrak{sp}(m,1)$ are defined by
\begin{align*}
&\mathrm{Ad}:\mathbf{Sp}(m,1)\to \mathrm{Aut}_\R(\mathfrak{sp}(m,1))\subset \mathbf{GL}_{2m^2-5m+3}(\R),
&&\mathrm{Ad}(x)(Y):= xYx^{-1},\\
&\mathrm{ad}:\mathfrak{sp}(m,1)\to \mathrm{End}_\R(\mathfrak{sp}(m,1))\subset \mathbf{Mat}_{2m^2-5m+3}(\R), &&
\mathrm{ad}(X)(Y):= XY-YX,
\end{align*}
where the matrix multiplication comes from the matrix multiplication of $\mathrm{Mat}_{m+1}(\mathbb{H})$.  
The \textbf{Killing form} on $\mathfrak{p}$ is a bilinear $\R$-form defined via
$$\kappa_0(X,Y):=\mathrm{Tr}((\mathrm{ad}X)(\mathrm{ad}Y)).$$
The \textbf{Killing metric} $\kappa$ on $\mathbf{H}_{\mathbb{H}}^m$ is the Riemannian metric obtained by using the action of $\mathbf{Sp}(m,1)$ to identify each tangent space with the inner product space $(V_0, \kappa_0)$.
It is an immediate consequence that the corresponding distance formula is $\mathbf{Sp}(m,1)$-invariant.

\subsubsection{The Scaling Factor}
While the hyperbolic metric is geometric in nature and the Killing metric is Lie theoretic in nature, they both yield
$\mathbf{Sp}(m,1)$-invariant metrics on $\mathbf{H}_{\mathbb{H}}^m$, and hence there is some constant $c\in\R_{>0}$ such that $cg=\kappa$.

\qquad Let $h_0$ denote the restriction of $h$ to $V_0$ and let $g_0$ denote the bilinear form on $V_0$ obtained by evaluating the hyperbolic metric $g$ at  $[\mathbf{0},1]$.

\begin{lem}\label{lemma:gscale}
Let $\mathbf{w}\in \mathbb{H}^n$ be the vector with 1 in the first entry and zeros elsewhere.  Then
$g_0(\mathbf{w},\mathbf{w})=4$.
\end{lem}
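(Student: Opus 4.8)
The plan is to evaluate the hyperbolic metric directly at the base point $[\mathbf{0},1]$ using formula \eqref{metricrep}. Here the distinguished negative vector is $\mathbf{v}=(\mathbf{0},1)$, so $h(\mathbf{v},\mathbf{v})=-\overline{1}\cdot 1=-1$ by the definition of $h$ with signature $(m,1)$. The tangent vector $\mathbf{w}$ in the statement corresponds under the horizontal-subspace identification to a vector of the form $(\mathbf{w}',0)\in V_0$, namely the one whose first of the $m$ quaternionic coordinates is $1$ and whose remaining coordinates (including the $(m+1)$-st) vanish. Thus as an element of $\mathbb{H}^{m,1}$ we take $\mathbf{w}=(1,0,\ldots,0,0)$, and I would begin by computing each of the three Hermitian pairings appearing in \eqref{metricrep}.

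The key observation is that $\mathbf{w}$ lies in $V_0$ and $\mathbf{v}=(\mathbf{0},1)$ has support only in the last coordinate, so the cross terms vanish. First I would compute $h(\mathbf{w},\mathbf{v})=\sum_{i=1}^m \overline{w_i}\cdot 0 - \overline{w_{m+1}}\cdot 1 = 0$, since $w_{m+1}=0$ and the first $m$ entries of $\mathbf{v}$ are zero; by conjugate symmetry $h(\mathbf{v},\mathbf{w})=0$ as well. This means $\mathbf{w}$ already lies in $[\mathbf{v}]_{\mathbf{v}}^\perp$, so the projection term drops out and the numerator of \eqref{metricrep} simplifies to $h(\mathbf{v},\mathbf{v})h(\mathbf{w},\mathbf{w})$. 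Next I would compute $h(\mathbf{w},\mathbf{w})=\sum_{i=1}^m \overline{w_i}w_i - \overline{w_{m+1}}w_{m+1}=\overline{1}\cdot 1=1$, since only the first coordinate is nonzero and it contributes with a plus sign.

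Substituting into \eqref{metricrep} gives
\begin{align*}
\langle \mathbf{w},\mathbf{w}\rangle_{\mathbf{v}}
&=-4\left(\frac{h(\mathbf{v},\mathbf{v})h(\mathbf{w},\mathbf{w})-h(\mathbf{w},\mathbf{v})h(\mathbf{v},\mathbf{w})}{h(\mathbf{v},\mathbf{v})^2}\right)\\
&=-4\left(\frac{(-1)(1)-0}{(-1)^2}\right)=-4\cdot(-1)=4.
\end{align*}
Since this value is already real, taking the real part in \eqref{eqn:hypmetric2} leaves it unchanged, so $g_0(\mathbf{w},\mathbf{w})=\mathrm{Re}(\langle \mathbf{w},\mathbf{w}\rangle_{\mathbf{v}})=4$, as claimed.

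I do not anticipate a genuine obstacle here, as the computation is short once the base point and the identifications are pinned down; the only point requiring care is bookkeeping the signature convention of $h$ (the minus sign on the last coordinate) and confirming that $\mathbf{w}$ is genuinely orthogonal to $\mathbf{v}$ so that the projection term vanishes. The mild subtlety worth flagging explicitly is the passage between $\mathbf{w}\in\mathbb{H}^m$ as stated in the lemma and its image $(\mathbf{w},0)\in V_0\subset\mathbb{H}^{m,1}$ under the identification $T_{[\mathbf{0},1]}(\mathbf{L}_{\mathbb{H}}^m)\cong V_0$; making this identification precise is what justifies applying \eqref{metricrep} at all, and it is the step I would state carefully before grinding the arithmetic.
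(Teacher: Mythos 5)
Your proof is correct and is essentially the paper's argument: both evaluate the hyperbolic metric at the base point $[\mathbf{0},1]$ with $h(\mathbf{v},\mathbf{v})=-1$ and $h_0(\mathbf{w},\mathbf{w})=1$, yielding $\mathrm{Re}(4)=4$. The only cosmetic difference is that you route through \eqref{metricrep} and verify the projection term vanishes via $h(\mathbf{w},\mathbf{v})=0$, whereas the paper applies \eqref{eqn:hypmetric} directly, which amounts to the same computation.
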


\begin{proof}
A direct computation using \eqref{eqn:hypmetric} gives $$g_0(\mathbf{w},\mathbf{w})=\mathrm{Re}\left(\frac{-4h_0(\mathbf{w},\mathbf{w})}{-1}\right)=\mathrm{Re}(4(1+0+0+\cdots +0))=4.$$
\end{proof}

The following two lemmas about the Lie algebra $\mathfrak{sp}(m,1)$ are easily verified with a little Lie algebra bookkeeping.

\begin{lem}
The following is an $\R$-vector space basis of $\mathfrak{sp}(m,1)$.
\begin{itemize}
\item $X_\ell(\alpha)=\begin{bmatrix} 0_{n\times n}& \mathbf{w}\\ {}^T\overline{\mathbf{w}} & 0\end{bmatrix}$ where $\mathbf{w}$ is zero everywhere except $\alpha\in\{1,i,j,k\}$ in its $\ell^{th}$ entry.
There are $4m$ of these.
\item $Y_{\ell_1\ell_2}(\alpha)$, $\ell_1<\ell_2<m+1$ is zero everywhere except $\alpha\in \{1,i,j,k\}$ in the $\ell_1\ell_2^{th}$ entry and $-\overline{\alpha}$ in the $\ell_2\ell_1^{th}$ entry.
There are $4\left(\frac{(m-1)m}{2}\right)=2m^2-2m$ of these.
\item $H_\ell(\alpha)$ is zero everywhere except $\alpha\in \{i,j,k\}$ in the $\ell^{th}$ diagonal entry. 
There are $3m+3$  of these.
\end{itemize}
Furthermore $\{X_\ell(\alpha)\}$ is a basis for $\mathfrak{p}$ and $\{Y_{\ell_1\ell_2}(\alpha), H_\ell(\alpha)\}$ is a basis for $\mathfrak{k}$.
\end{lem}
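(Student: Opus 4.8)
The plan is to verify directly that the stated collection of matrices lies in $\mathfrak{sp}(m,1)$, that it is $\R$-linearly independent, and that it has the right cardinality to be a basis. Recall that $\mathfrak{sp}(m,1)=\{A\in \mathfrak{gl}_{m+1}(\mathbb{H})\ |\ H({}^T\overline{A})H+A=0\}$, so the first task is to translate this matrix condition into explicit conditions on the entries. Writing $A=(a_{pq})$ and using that $H=\mathrm{diag}(1,\ldots,1,-1)$ flips the sign of the last row and last column, I would show that $A\in \mathfrak{sp}(m,1)$ precisely when the diagonal entries $a_{pp}$ are purely imaginary quaternions, the off-diagonal entries among the first $m$ coordinates satisfy $a_{pq}=-\overline{a_{qp}}$, and the entries coupling the last coordinate to the others satisfy $a_{p,m+1}=\overline{a_{m+1,p}}$ (the sign change from $H$ converting the skew-Hermitian relation into a Hermitian one for the last column). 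This entrywise description is exactly what the three families are engineered to realize.

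Next I would check that each listed matrix satisfies these entry conditions. The $X_\ell(\alpha)$ have a single quaternion $\alpha$ in the $\ell$-th slot of the last column and $\overline{\alpha}$ in the mirrored slot of the last row, matching the Hermitian relation $a_{p,m+1}=\overline{a_{m+1,p}}$; since $\alpha\in\{1,i,j,k\}$ is allowed to be $1$ here, these span $\mathfrak{p}$, which is consistent with the identification $\mathfrak p=\{[\,0,\mathbf v;{}^T\overline{\mathbf v},0\,]\}$ given earlier. The $Y_{\ell_1\ell_2}(\alpha)$ have $\alpha$ and $-\overline{\alpha}$ in mirrored off-diagonal positions within the first $m$ coordinates, realizing the skew-Hermitian relation $a_{pq}=-\overline{a_{qp}}$. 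The $H_\ell(\alpha)$ place a purely imaginary $\alpha\in\{i,j,k\}$ on the diagonal, which is the diagonal condition; note that $1$ is correctly excluded here, since the diagonal entries must be purely imaginary. The fact that the $Y$ and $H$ families live in the block-diagonal $\mathfrak{sp}(m)\oplus\mathfrak{sp}(1)$ part shows they span $\mathfrak{k}$.

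For linear independence over $\R$, I would observe that the three families occupy disjoint sets of matrix positions (last column/row for $X$, strictly off-diagonal first-$m$ positions for $Y$, diagonal for $H$), so it suffices to check independence within each family; within each family the entries are placed in distinct positions using the $\R$-basis $\{1,i,j,k\}$ (or $\{i,j,k\}$) of $\mathbb{H}$, from which independence is immediate. Finally I would count: $4m$ matrices $X_\ell(\alpha)$, plus $4\binom{m}{2}=2m^2-2m$ matrices $Y_{\ell_1\ell_2}(\alpha)$, plus $3m+3$ matrices $H_\ell(\alpha)$, giving $2m^2+5m+3$; comparing with $\dim_\R \mathfrak{sp}(m,1)=\dim \mathbf{Sp}(m,1)=m(2m+3)=2m^2+3m$ shows the discrepancy comes from the $\mathfrak{sp}(1)$ summand, so I would double-check the count of diagonal generators. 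The main obstacle, and the only genuinely delicate point, is getting every sign right in the entrywise reduction of $H({}^T\overline{A})H+A=0$ — in particular tracking how conjugation of quaternions interacts with the sign flips introduced by $H$ on the last row and column — since a misplaced sign would swap the Hermitian and skew-Hermitian conditions and misidentify which family belongs to $\mathfrak p$ versus $\mathfrak k$. Once that reduction is pinned down, the verification, independence, and dimension count are the routine ``Lie algebra bookkeeping'' the statement advertises.
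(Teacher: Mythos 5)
Your overall strategy (reduce $H({}^T\overline{A})H+A=0$ to entrywise conditions, verify each family, check independence, count) is exactly the routine verification the paper has in mind --- it offers no proof, dismissing the lemma as ``Lie algebra bookkeeping'' --- and your entrywise reduction is correct, including the sign analysis showing that the last row/column satisfies the Hermitian relation $a_{p,m+1}=\overline{a_{m+1,p}}$ while the upper-left block is skew-Hermitian and the diagonal is purely imaginary. But your final dimension count contains a genuine error that, as written, would derail the conclusion. You claim $\dim_\R \mathfrak{sp}(m,1)=m(2m+3)=2m^2+3m$; this is the dimension of a rank-$m$ symplectic algebra, whereas $\mathfrak{sp}(m,1)$ consists of $(m+1)\times(m+1)$ quaternionic matrices, so its complexification is $\mathfrak{sp}_{2m+2}(\C)$ of type $C_{m+1}$, and
\[
\dim_\R \mathfrak{sp}(m,1)=(m+1)\bigl(2(m+1)+1\bigr)=2m^2+5m+3 .
\]
This matches the lemma's count $4m+(2m^2-2m)+(3m+3)=2m^2+5m+3$ exactly: there is no discrepancy, and the $3m+3=3(m+1)$ diagonal generators are correct, with the $\ell=m+1$ entry accounting for the $\mathfrak{sp}(1)$ summand of $\mathfrak{k}$. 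Your suggestion to ``double-check the count of diagonal generators'' would have you doubting a correct statement, and if you instead trusted your dimension formula you would wrongly conclude the listed set is too large to be independent.

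You can also sidestep the dimension comparison entirely: your entrywise description of $\mathfrak{sp}(m,1)$ directly shows spanning, since any matrix satisfying those conditions is determined by its diagonal entries (purely imaginary, hence $\R$-spanned by $\{i,j,k\}$), its strictly-upper entries in the first $m$ coordinates, and its entries in the last column, each a quaternion and hence $\R$-spanned by $\{1,i,j,k\}$ --- and these free parameters are precisely the $H_\ell(\alpha)$, $Y_{\ell_1\ell_2}(\alpha)$, and $X_\ell(\alpha)$. Combined with the independence argument you already give (disjoint positions across families, $\R$-independence of $\{1,i,j,k\}$ within a position), this completes the proof without ever invoking the Lie-theoretic dimension formula, which is safer given how easy it is to misidentify the rank. (As a side remark, the paper's own text displays the adjoint representation in $\mathbf{GL}_{2m^2-5m+3}(\R)$; that exponent is a typo for $2m^2+5m+3$, so do not let it reinforce the miscount.)
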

%
%
%

\begin{lem}${}$
\begin{enumerate}
\item $[X_{\ell_1}(\alpha_1),X_{\ell_2}(\alpha_2)]=Y_{\ell_1\ell_2}(\alpha_1\overline{\alpha_2})$ for $\ell_1<\ell_2$,
\item $[X_{\ell_1}(\alpha_1),Y_{\ell_1\ell_2}(\alpha_2)]=X_{\ell_2}(\overline{\alpha_2}\alpha_1)$ for $\ell_1<\ell_2$,
\item $[X_{\ell_1}(\alpha_1),H_{\ell_2}(\alpha_2)]=0$,
\item $[X_{\ell_1}(\alpha_1),Y_{\ell_2\ell_3}(\alpha_2)]=0$ where $\ell_1\ne\ell_2<\ell_3$.
\end{enumerate}
\end{lem}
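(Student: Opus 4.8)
The plan is to verify all four identities by direct matrix computation, exploiting the fact that each basis element is extremely sparse. First I would record precisely the nonzero entries: $X_\ell(\alpha)$ has $\alpha$ in position $(\ell,m{+}1)$ and $\overline{\alpha}$ in position $(m{+}1,\ell)$; $Y_{\ell_1\ell_2}(\alpha)$ has $\alpha$ in position $(\ell_1,\ell_2)$ and $-\overline{\alpha}$ in position $(\ell_2,\ell_1)$; and $H_\ell(\alpha)$ has $\alpha$ in position $(\ell,\ell)$. Thus each $X_\ell$ has support only in ``row $\ell$ / column $m{+}1$'' and ``row $m{+}1$ / column $\ell$,'' so it links the $\ell$-th coordinate to the $(m{+}1)$-th. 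Since $[A,B]=AB-BA$ with multiplication inherited from $\mathrm{Mat}_{m+1}(\mathbb{H})$, a product $AB$ contributes to entry $(i,j)$ only when some index $k$ lies in both the column-support of $A$ and the row-support of $B$; for these matrices there are at most one or two such $k$, so each product collapses to a single nonzero entry.

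For (1), I would compute that in $X_{\ell_1}(\alpha_1)X_{\ell_2}(\alpha_2)$ the only surviving contraction runs through $k=m{+}1$ (the hypothesis $\ell_1<\ell_2$ rules out the other), yielding $\alpha_1\overline{\alpha_2}$ in position $(\ell_1,\ell_2)$, while $X_{\ell_2}(\alpha_2)X_{\ell_1}(\alpha_1)$ yields $\alpha_2\overline{\alpha_1}$ in position $(\ell_2,\ell_1)$. Subtracting gives $\alpha_1\overline{\alpha_2}$ at $(\ell_1,\ell_2)$ and $-\alpha_2\overline{\alpha_1}$ at $(\ell_2,\ell_1)$, and the antihomomorphism identity $\overline{\alpha_1\overline{\alpha_2}}=\alpha_2\overline{\alpha_1}$ identifies this with $Y_{\ell_1\ell_2}(\alpha_1\overline{\alpha_2})$. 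Identity (2) is entirely analogous, but now the shared index $\ell_1$ provides the contraction: $X_{\ell_1}(\alpha_1)Y_{\ell_1\ell_2}(\alpha_2)$ produces $\overline{\alpha_1}\alpha_2$ at $(m{+}1,\ell_2)$ and $Y_{\ell_1\ell_2}(\alpha_2)X_{\ell_1}(\alpha_1)$ produces $-\overline{\alpha_2}\alpha_1$ at $(\ell_2,m{+}1)$, and after subtraction the two surviving entries match $X_{\ell_2}(\overline{\alpha_2}\alpha_1)$, using $\overline{\overline{\alpha_2}\alpha_1}=\overline{\alpha_1}\alpha_2$.

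For (3) and (4) I would observe that there is simply no intermediate index joining the supports. In $[X_{\ell_1}(\alpha_1),H_{\ell_2}(\alpha_2)]$ with $\ell_1\neq\ell_2$, the support $\{\ell_1,m{+}1\}$ of $X_{\ell_1}$ never meets the single index $\ell_2$ of $H_{\ell_2}$, so both $AB$ and $BA$ vanish; likewise in (4), once $\ell_1$ is disjoint from $\{\ell_2,\ell_3\}$, no contraction through $k\in\{\ell_2,\ell_3\}$ survives, since $X_{\ell_1}$ is supported only on rows and columns $\ell_1$ and $m{+}1>\ell_3$. The same computation shows both brackets are genuinely nonzero when the indices are allowed to coincide, so the disjointness hypotheses in (3) and (4) are exactly what is required.

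There is no genuine obstacle here: each identity reduces to one or two bilinear contractions. The only point demanding care, and the source of the warning made earlier in the section, is the noncommutativity of $\mathbb{H}$. The multiplication order and the placement of conjugation bars must be tracked exactly as they arise from the matrix positions, and the agreement with the right-hand sides rests on the antihomomorphism property $\overline{\alpha\beta}=\overline{\beta}\,\overline{\alpha}$ rather than on any commutativity; this is precisely why, for instance, (1) produces $\alpha_1\overline{\alpha_2}$ while (2) produces $\overline{\alpha_2}\alpha_1$.
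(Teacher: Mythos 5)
Your verification of (1), (2), and (4) is correct, and it is exactly the computation the paper intends: the paper offers no written proof (it declares the lemma ``easily verified with a little Lie algebra bookkeeping''), and your sparse-support bookkeeping, with conjugations tracked via $\overline{\alpha\beta}=\overline{\beta}\,\overline{\alpha}$, is the right way to carry that out. In particular your entry-by-entry calculations for (1) and (2), and the disjoint-support argument for (4) (where $\ell_3<m+1$ guarantees the supports $\{\ell_1,m+1\}$ and $\{\ell_2,\ell_3\}$ are disjoint once $\ell_1\notin\{\ell_2,\ell_3\}$), all check out.

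Your proof of (3), however, contains a genuine error. The claim that ``the support $\{\ell_1,m+1\}$ of $X_{\ell_1}$ never meets the single index $\ell_2$ of $H_{\ell_2}$'' fails when $\ell_2=m+1$, and this case is in scope: the basis contains $H_\ell(\alpha)$ for all $1\le \ell\le m+1$ (there are $3m+3=3(m+1)$ of them, with $H_{m+1}$ spanning the $\mathfrak{sp}(1)$ block of $\mathfrak{k}$). There neither product vanishes: $\bigl(X_{\ell_1}(\alpha_1)H_{m+1}(\alpha_2)\bigr)_{\ell_1,m+1}=\alpha_1\alpha_2$ and $\bigl(H_{m+1}(\alpha_2)X_{\ell_1}(\alpha_1)\bigr)_{m+1,\ell_1}=\alpha_2\overline{\alpha_1}$, so $[X_{\ell_1}(\alpha_1),H_{m+1}(\alpha_2)]=X_{\ell_1}(\alpha_1\alpha_2)\neq 0$, using $\overline{\alpha_1\alpha_2}=-\alpha_2\overline{\alpha_1}$ for pure $\alpha_2$. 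Hence your closing assertion that the disjointness hypothesis $\ell_1\neq\ell_2$ is ``exactly what is required'' is false for (3): the correct hypothesis is $\ell_2\notin\{\ell_1,m+1\}$. In fairness, the lemma as printed shares this defect --- item (3) carries no hypothesis at all, yet fails both for $\ell_1=\ell_2$ (where, consistent with your remark, $[X_\ell(\alpha_1),H_\ell(\alpha_2)]=-X_\ell(\alpha_2\alpha_1)\neq 0$) and for $\ell_2=m+1$ --- and the omission is not idle: these nonzero brackets, together with $[X_1(1),X_1(\alpha)]=-2H_1(\alpha)+2H_{m+1}(\alpha)$ for pure $\alpha$, contribute to $\mathrm{ad}(X_1(1))^2$ and are missing from the trace count in the proof of Corollary \ref{cor:kscale}. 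A correct write-up of (3) must state the excluded indices explicitly and exhibit the nonzero brackets in the excluded cases, as you began to do but did not complete.
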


\begin{cor}\label{cor:kscale}
$\kappa_0(X_{1}(1),X_{1}(1))=8(m-1)$.
\end{cor}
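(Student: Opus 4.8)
The plan is to compute the Killing form value $\kappa_0(X_1(1), X_1(1)) = \mathrm{Tr}\bigl((\mathrm{ad}\, X_1(1))^2\bigr)$ directly, using the explicit basis and bracket relations supplied by the two preceding lemmas. Since $\kappa_0(X,Y) = \mathrm{Tr}((\mathrm{ad}\,X)(\mathrm{ad}\,Y))$, the trace of $(\mathrm{ad}\,X_1(1))^2$ can be read off by examining how $\mathrm{ad}\,X_1(1)$ acts on each basis element, and in particular by identifying the diagonal entries of its matrix in the chosen basis. The trace is the sum over all basis vectors $B$ of the coefficient of $B$ appearing in $[X_1(1),[X_1(1),B]]$, so I only need to track those double brackets that return a nonzero multiple of the original basis vector.

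First I would set $X := X_1(1)$ and run through the bracket table to see which basis elements contribute. Using relation (1), $[X_1(1), X_\ell(\alpha)]$ lands in the $Y$-part (or vanishes when $\ell=1$, since $[X_1(1),X_1(1)]=0$), so I then apply relation (2) to bring it back to the $X$-part. For the off-diagonal generators $Y_{1\ell}(\alpha)$ with $\ell>1$, relation (4) kills $[X_1(1),Y_{\ell_2\ell_3}]$ when $1\notin\{\ell_2,\ell_3\}$, and relation (3) kills all brackets with the $H_\ell(\alpha)$. So the only surviving contributions to the trace come from the basis vectors $X_\ell(\alpha)$ with $\ell\ge 2$ paired through $Y_{1\ell}$, together with any $Y_{1\ell}(\alpha)$ that cycle back to themselves. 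Concretely, for each $\ell\ge 2$ and each $\alpha\in\{1,i,j,k\}$ I compute $[X_1(1),[X_1(1),X_\ell(\alpha)]] = [X_1(1), Y_{1\ell}(\overline{\alpha})]$ via (1), then apply (2) to recover a multiple of $X_\ell(\alpha)$ and extract the diagonal coefficient; symmetrically I handle the double bracket starting from $Y_{1\ell}(\alpha)$.

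I expect the main obstacle to be the noncommutative bookkeeping in relations (1) and (2): the quaternion conjugations $\alpha_1\overline{\alpha_2}$ and $\overline{\alpha_2}\alpha_1$ mean I must be careful that the coefficient recovered after the two brackets is exactly the original generator (with the correct sign and real multiple), and not some other quaternion basis element, since $\mathrm{ad}\,X_1(1)$ need not be diagonal with respect to the $\{1,i,j,k\}$-labeling. For $\alpha=1$ the composition $\alpha_1\overline{\alpha_2}=1$ and $\overline{\alpha_2}\alpha_1=1$ behave cleanly, but for $\alpha\in\{i,j,k\}$ I need to confirm the map $X_\ell(\alpha)\mapsto X_\ell(\alpha)$ picks up the same diagonal weight, so that every one of the $4(m-1)$ generators $X_\ell(\alpha)$, $\ell\ge 2$, contributes equally to the trace. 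Summing these equal contributions (doubled to account for the $X$-to-$Y$ and $Y$-to-$X$ passes that both feed the diagonal) should yield $8(m-1)$, matching the claim.

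Once the diagonal coefficients are tabulated, the final step is purely arithmetic: add up the contributions and confirm the total is $8(m-1)$, so that combined with Lemma~\ref{lemma:gscale} one obtains the scaling constant $c$ with $c\,g_0(\mathbf{w},\mathbf{w}) = \kappa_0(X_1(1),X_1(1))$, i.e.\ $4c = 8(m-1)$, giving $c = 2(m-1)$.
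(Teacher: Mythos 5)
Your overall plan --- computing $\kappa_0(X_1(1),X_1(1))=\mathrm{Tr}\bigl((\mathrm{ad}\,X_1(1))^2\bigr)$ by tracking, for each basis vector $B$, the coefficient of $B$ in $[X_1(1),[X_1(1),B]]$ --- is exactly the computation the paper performs, and your count of $4(m-1)$ contributions of $1$ from the $X_\ell(\alpha)$, $\ell\ge2$ (through $Y_{1\ell}$ and back), plus $4(m-1)$ from the $Y_{1\ell}(\alpha)$, matches the paper's proof verbatim. But there is a genuine gap, located at your parenthetical ``(or vanishes when $\ell=1$, since $[X_1(1),X_1(1)]=0$)'': that disposes only of $\alpha=1$. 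Relation (1) of the bracket lemma is stated only for $\ell_1<\ell_2$ and says nothing about $[X_1(1),X_1(\alpha)]$ for $\alpha\in\{i,j,k\}$; a direct block computation gives $[X_1(1),X_1(\alpha)]=-2H_1(\alpha)+2H_{m+1}(\alpha)\ne 0$. Likewise relation (3), which you invoke to kill all brackets with the $H_\ell(\alpha)$, fails when the indices meet: $[X_1(1),H_1(\alpha)]=-X_1(\alpha)$ and $[X_1(1),H_{m+1}(\alpha)]=X_1(\alpha)$. So your claim that only the $X_\ell(\alpha)$, $\ell\ge2$, and $Y_{1\ell}(\alpha)$ feed the diagonal is false, and the bookkeeping you yourself flagged as the main obstacle is precisely where the argument breaks.

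Carrying out the omitted cases, $(\mathrm{ad}\,X_1(1))^2X_1(\alpha)=4X_1(\alpha)$ for $\alpha\in\{i,j,k\}$, and $(\mathrm{ad}\,X_1(1))^2$ has diagonal coefficient $2$ on each of $H_1(\alpha)$ and $H_{m+1}(\alpha)$, $\alpha\in\{i,j,k\}$; these add $3\cdot 4+6\cdot 2=24$ to the trace, yielding $\kappa_0(X_1(1),X_1(1))=8(m-1)+24=8(m+2)$ rather than $8(m-1)$. Two sanity checks confirm this: the Killing form of $\mathfrak{sp}_{2n}(\C)$ is $(2n+2)\,\mathrm{Tr}$ in the defining representation, and with $n=m+1$ one has $\mathrm{Tr}_{\C}\bigl(X_1(1)^2\bigr)=4$, so $\kappa_0=(2m+4)\cdot 4=8(m+2)$; moreover at $m=1$ the value $8(m-1)=0$ is impossible since the Killing form is positive definite on $\mathfrak{p}$. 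To be fair, the paper's own proof makes the identical omission (its relation (3) is false as stated when $\ell_1=\ell_2$), so your attempt faithfully reproduces the published argument; but a complete proof must handle the $\ell=1$ column, and the corrected constants are $\kappa_0(X_1(1),X_1(1))=8(m+2)$ and $2(m+2)g=\kappa$ in Proposition \ref{samemetrics}. This is harmless downstream, since the paper only uses the fact that the two metrics are proportional, hence have the same totally geodesic subspaces --- but your final step deducing $c=2(m-1)$ from Lemma \ref{lemma:gscale} inherits the same error.
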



\begin{proof}
We use the above lemmas to compute $\mathrm{ad}(X_1(1))$.  It is not hard to see that $\mathrm{ad}(X_1(1))$ is going to be symmetric with all zeros except for $4(m-1)$ 1's from $X_\ell(\alpha)$ for $\ell>1$ and $4(m-1)$ 1's from $Y_{\ell_1\ell}(\alpha)$ for $\ell>1$.
Hence $(\mathrm{ad}(X_1(1)))^2$ is a diagonal matrix with $8(m-1)$ 1's along the diagonal and all the rest 0's.
\end{proof}

Corollary \ref{cor:kscale} together with Lemma \ref{lemma:gscale} and earlier remarks prove the following proposition.

\begin{prop}\label{samemetrics}
Let $g$ and $\kappa$ denote the hyperbolic metric and Killing metric on $\mathbf{H}_{\mathbb{H}}^m$ respectively.   Then
$2(m-1)g=\kappa$.
\end{prop}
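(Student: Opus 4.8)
The plan is to establish the scalar $c$ in the relation $cg = \kappa$ by evaluating both metrics on a single, conveniently chosen tangent vector at the basepoint $[\mathbf{0},1]$ and taking the ratio. Since both $g$ and $\kappa$ are $\mathbf{Sp}(m,1)$-invariant Riemannian metrics on $\mathbf{H}_{\mathbb{H}}^m$, and such metrics are unique up to scaling by the cited result of Mostow, the constant $c$ is well defined and independent of which nonzero tangent vector we test. The natural choice is the vector $\mathbf{w}\in\mathbb{H}^m$ with a $1$ in the first entry and zeros elsewhere, regarded as an element of $V_0 \cong T_{[\mathbf{0},1]}(\mathbf{L}_{\mathbb{H}}^m)$ under the identifications set up in the previous subsection; this is exactly the vector for which both $g_0$ and $\kappa_0$ have already been computed in the preceding lemmas.

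Next I would simply assemble the two computed values. Lemma~\ref{lemma:gscale} gives $g_0(\mathbf{w},\mathbf{w}) = 4$, where $\mathbf{w}$ corresponds to the tangent vector in $V_0$. Under the identification $T:V_0\to\mathfrak{p}$, this same vector $\mathbf{w}$ maps to the matrix $X_1(1)$ in the basis of $\mathfrak{p}$, so that $\kappa_0$ evaluated on it is precisely $\kappa_0(X_1(1),X_1(1))$, which Corollary~\ref{cor:kscale} computes to be $8(m-1)$. Because $cg=\kappa$ forces $c\,g_0(\mathbf{w},\mathbf{w}) = \kappa_0(X_1(1),X_1(1))$ after matching the two descriptions of the same tangent vector, solving yields
$$
c = \frac{\kappa_0(X_1(1),X_1(1))}{g_0(\mathbf{w},\mathbf{w})} = \frac{8(m-1)}{4} = 2(m-1),
$$
which is exactly the claimed relation $2(m-1)g = \kappa$.

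The only genuine subtlety, and the step I would be most careful about, is verifying that the vector $\mathbf{w}$ used in Lemma~\ref{lemma:gscale} and the Lie algebra element $X_1(1)$ used in Corollary~\ref{cor:kscale} really represent the \emph{same} tangent vector at $[\mathbf{0},1]$ under the two different identifications of the tangent space. The hyperbolic metric identifies $T_{[\mathbf{0},1]}$ with the horizontal subspace $V_0=\{(\mathbf{w},0)\}$, while the Killing metric identifies it with $\mathfrak{p}$; one must confirm that the map $T:V_0\to\mathfrak{p}$ sending $\mathbf{w}$ to the off-diagonal matrix is precisely the bridge between these two pictures, and that $X_1(1)$ is the image of the chosen $\mathbf{w}$. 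This is immediate from the explicit form of $T$ given in the definition of the Killing metric and from the description of $X_1(1)$ in the basis lemma, so no further computation is needed beyond citing these identifications. Once this bookkeeping is confirmed, the proportionality constant follows at once and the proposition is proved.
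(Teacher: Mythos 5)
Your proposal is correct and is essentially identical to the paper's argument, which likewise invokes the uniqueness of $\mathbf{Sp}(m,1)$-invariant metrics up to scaling and combines Lemma \ref{lemma:gscale} with Corollary \ref{cor:kscale} to obtain the ratio $8(m-1)/4=2(m-1)$. Your explicit check that $\mathbf{w}\in V_0$ and $X_1(1)\in\mathfrak{p}$ represent the same tangent vector under the map $T$ is exactly the bookkeeping the paper leaves implicit in its ``earlier remarks.''
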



%
%
%
%
%
%
%
%

\subsection{\textbf{Isometries of $\mathbf{H}_{\mathbb{H}}^m$.}} Given equations \eqref{eqn:hypmetric} and \eqref{eqn:hypdist}, it is clear that $\mathbf{Sp}(m,1)$ acts isometrically on $\mathbf{H}_{\mathbb{H}}^m$ and it in fact the full isometry group is given by
\begin{align}
\mathrm{Isom}(\mathbf{H}_{\mathbb{H}}^m)\cong \mathbf{PSp}(m,1):=\mathbf{Sp}(m,1)/\{\pm I\}.
\end{align}

Note that all isometries in $\mathbf{PSp}(m,1)$ are orientation preserving.

\subsection{\textbf{Totally Geodesic Subspaces of $\mathbf{H}_{\mathbb{H}}^m$.}} 
Proposition \ref{samemetrics} says that totally geodesic subspaces of $\mathbf{H}_{\mathbb{H}}^m$ coming from the hyperbolic metric and the Killing metric coincide.  
We may now use Lie theory to classify the totally geodesic subspaces of $\mathbf{H}_{\mathbb{H}}^m$.  
The totally geodesic subspaces are naturally in bijective correspondence with Lie triple systems of $\mathfrak{p}$ \cite[IV. Thm 7.2]{H}, i.e. $\R$-vector subspaces $\mathfrak{m}\subset \mathfrak{p}$ such that $[[\mathfrak{m},\mathfrak{m}], \mathfrak{m}]\subset \mathfrak{m}]$.
For $X,Y, Z\in \mathfrak{p}$, let

$$X = \begin{bmatrix}0_{n\times n}& \mathbf{v}\\ {}^T\overline{\mathbf{v}} & 0\end{bmatrix}
\qquad
Y = \begin{bmatrix} 0_{n\times n}& \mathbf{w}\\ {}^T\overline{\mathbf{w}} & 0\end{bmatrix}
\qquad
Z= \begin{bmatrix} 0_{n\times n}& \mathbf{u}\\ {}^T\overline{\mathbf{u}} & 0\end{bmatrix}
$$

where $\mathbf{v}, \mathbf{w}, \mathbf{u}\in \mathbb{H}^m$.
A quick computation gives

\begin{align*}
[[X,Y],Z]&=\begin{bmatrix} 0_{n\times n}& \boldsymbol\alpha\\ {}^T\overline{\mathbf{\boldsymbol\alpha}} & 0\end{bmatrix}
\end{align*}

where 

\begin{align}\label{lietrip}
\boldsymbol\alpha &= \mathbf{v}({}^T\overline{\mathbf{w}} \mathbf{u}) - \mathbf{w}({}^T\overline{\mathbf{v}}\mathbf{u})-\mathbf{u}({}^T\overline{\mathbf{v}}\mathbf{w}-{}^T\overline{\mathbf{w}}\mathbf{v}) \notag\\
&= \mathbf{v} h_0(\mathbf{w}, \mathbf{u}) - \mathbf{w} h_0(\mathbf{v},\mathbf{u})-\mathbf{u}(h_0( \mathbf{v},\mathbf{w})-h_0(\mathbf{w},\mathbf{v})).
\end{align}
%
From this computation, we see there are three classes of Lie triple subspaces $W_0\subset 
V_0\cong \mathfrak{p}$:

\phantomsection\label{totallydef}
\begin{enumerate}[\quad (T1)]
\item \textbf{Totally Real}.  
$h_0( W_0, W_0)\subset \R$.   
It follows that $W_0$ and $W_0\delta$ are $g_0$-orthogonal for any pure quaternion $\delta\in \mathbb{H}$.
\item \textbf{Totally Complex}.  
$W_0$ is not totally real but there exists a pure quaternion $\delta\in \mathbb{H}$ such that  $h_0(W_0, W_0)\subset \R(\delta)$.  
It follows that this condition is equivalent to the criterion that $W_0\delta=W_0$ and $W_0\mu$ and $W_0$ are $g_0$-orthogonal where $\mu$ is a pure quaternion complimentary to $\delta$ 
(i.e., $\mathbb{H}$ is generated as an $\R$-algebra by $\delta$ and $\mu$, where $\delta\mu=-\mu\delta$). Identifying the quadratic extension $\R(\delta)$ with $\C$, then $W_0\alpha=W_0$ for any $\alpha\in \mathbb{C}$.
\item \textbf{Totally Quaternionic}.  $W_0$ is neither totally real nor totally complex.  
In this case, it follows that for any pure quaternion $\delta$ and a compliment $\mu$, $W_0\delta=W_0=W_0\mu$.  
In particular, $W_0\alpha=W_0$ for any $\alpha\in \mathbb{H}$.
\end{enumerate}

Conversely, it is not hard to see from \eqref{lietrip} that any totally real, totally complex, or totally quaternionic subspace $W_0\subset \mathbb{H}^n$ determines a Lie triple system, and hence is the tangent space to a totally geodesic submanifold of quaternionic hyperbolic space going through $[\mathbf{0},1]$.

\begin{Def}
An $\R$-subspace $W\subset V$ \textbf{totally real} (resp. \textbf{totally complex}, \textbf{totally quaternionic}) if it is in the $\mathbf{Sp}(m,1)$-orbit of a subspace containing $[\mathbf{0},1]$ whose ``horizontal component'' is totally real (resp. totally complex, totally quaternionic).
\end{Def}

%
%

We now have the following classification.

\begin{thm}[Classification of Totally Geodesic Subspaces of $\mathbf{H}_{\mathbb{H}}^m$]\label{totgeoclassification}
There is a bijection between totally geodesic subspaces $N$ of $\mathbf{L}_{\mathbb{H}}^m$ and $\R$-subspaces $W\subset V$ containing a negative vector.
Furthermore, the induced Riemannian metric on $N$ is determined by $W$ as follows:\\
\begin{tabular}{llcl}
\quad\quad(1)&$N$ is real hyperbolic  &$\Leftrightarrow$ &$W$ is totally real.\\
\quad\quad(2)&$N$ is complex hyperbolic  &$\Leftrightarrow$ &$W$ is totally complex.\\
\quad\quad(3)&$N$ is quaternionic hyperbolic  &$\Leftrightarrow$ &$W$ is totally quaternionic.
\end{tabular}
\end{thm}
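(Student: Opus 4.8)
The plan is to reduce the classification to the infinitesimal picture at the base point $[\mathbf{0},1]$, where the trichotomy (T1)--(T3) has already been carried out, and then to spread it over $\mathbf{L}_{\mathbb{H}}^m$ using the transitive, isometric $\mathbf{Sp}(m,1)$-action (isometric for both metrics, which by Proposition \ref{samemetrics} share the same totally geodesic subspaces). Since $\mathbf{Sp}(m,1)$ acts transitively on negative lines, any totally geodesic subspace $N$ contains a line $[\mathbf{v}]$ that can be moved to $[\mathbf{0},1]$; after a further rotation by the $\mathbf{Sp}(1)$-factor of the stabilizer $K=\mathbf{Sp}(m)\cdot\mathbf{Sp}(1)$ I may assume the chosen representative is $e_{m+1}:=(\mathbf{0},1)$. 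It therefore suffices to classify totally geodesic subspaces through $[\mathbf{0},1]$, for which \cite[IV. Thm 7.2]{H} supplies a bijection with Lie triple systems $W_0\subset\mathfrak p\cong V_0\cong\mathbb H^m$. The bracket formula \eqref{lietrip} together with the discussion of (T1)--(T3) shows that every such $W_0$ is exactly one of totally real, totally complex, or totally quaternionic, so this trichotomy is both exhaustive and mutually exclusive.

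To build the bijection with $\R$-subspaces I would attach to a Lie triple system $W_0$ the $\R$-subspace $W:=W_0\oplus\R e_{m+1}\subset V$ (viewing $W_0\subset\mathbb H^m$ inside the first $m$ coordinates), which contains the negative vector $e_{m+1}$ and whose horizontal component is $W_0$; by the preceding Definition this $W$ is totally real, totally complex, or totally quaternionic according to the type of $W_0$. The crucial infinitesimal-to-global step is to verify that the geodesic issuing from $[\mathbf{0},1]$ in a unit direction $\mathbf{w}\in W_0$ is the negative-line family $[(\mathbf{w}\sinh t,\cosh t)]$, each representative of which lies in the real $2$-plane $\R(\mathbf{w},0)\oplus\R e_{m+1}\subset W$. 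Since $N$ is swept out by these geodesics and, conversely, every negative line in $W$ lies on such a geodesic, I obtain $N=\{[\mathbf{w}]:\mathbf{w}\in W,\ h(\mathbf{w},\mathbf{w})<0\}$. Transporting back by $\mathbf{Sp}(m,1)$ gives the assignment $N\mapsto W$ on all of $\mathbf{L}_{\mathbb{H}}^m$, with inverse sending $W$ to its set of negative lines; this is the asserted bijection.

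It remains to read off the induced metric, and here I would simply restrict the distance formula \eqref{eqn:hypdist} to representatives lying in $W$. Writing $\mathbf{v}_1,\mathbf{v}_2\in W$ and using $h(\mathbf{v}_i,\mathbf{v}_j)=h_0((\mathbf{v}_i)_{\mathrm{hor}},(\mathbf{v}_j)_{\mathrm{hor}})-r_ir_j$ for the real last-coordinates $r_i$, the type condition forces these inner products to be $\R$-valued in the totally real case, to lie in a fixed $\C=\R(\delta)$ in the totally complex case, and to be genuinely quaternionic in the totally quaternionic case. Consequently \eqref{eqn:hypdist} collapses to the standard distance formula of $\mathbf{H}_{\mathbb{R}}^{k}$, of $\mathbf{H}_{\mathbb{C}}^{k}$, and of $\mathbf{H}_{\mathbb{H}}^{k}$ respectively, where $k$ is the corresponding $\R$-, $\C$-, or $\mathbb H$-dimension of $W_0$; this identifies $N$ isometrically with the stated model. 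Because the trichotomy is exhaustive and the three model geometries are pairwise non-isometric, the forward implications upgrade automatically to the biconditionals (1)--(3).

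I expect the main obstacle to be the infinitesimal-to-global bookkeeping of the second paragraph rather than the metric computation: one must confirm that the exponentiated geodesics remain inside the real subspace $W$ and that $N$ is recovered exactly as the negative lines of $W$, independently of the base line and of the chosen negative representative. The noncommutativity of $\mathbb H$ demands particular care in this step---for instance in justifying that $W_0\alpha=W_0$ (a two-sided, not one-sided, condition) is the correct characterization in the quaternionic case, that the $h$-orthogonality relations of (T1)--(T3) behave correctly under the $\mathbf{Sp}(1)$-normalization, and that the right-$\mathbb H$-multiplication ambiguity $[\mathbf{w}]=[\mathbf{w}\alpha]$ does not disturb the correspondence. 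Once $W$ is in hand, the metric identifications are routine specializations of \eqref{eqn:hypdist}.
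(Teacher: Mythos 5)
Your proposal is correct and follows essentially the same route as the paper: reduction to the base point $[\mathbf{0},1]$ via the transitive $\mathbf{Sp}(m,1)$-action, the Lie-triple-system correspondence of \cite[IV. Thm 7.2]{H} combined with the bracket computation \eqref{lietrip} and the trichotomy (T1)--(T3), and restriction of the hyperbolic metric to identify $N$ with the real, complex, or quaternionic model. The only minor divergence is in the converse direction, where the paper reads off the (almost complex, almost quaternionic, or absent) structure on the tangent bundle of $N$ while you upgrade the forward implications to biconditionals via the exhaustiveness and mutual exclusivity of the trichotomy together with the pairwise non-isometry of the normalized models; your write-up also makes explicit the geodesic bookkeeping (the curves $[(\mathbf{w}\sinh t,\cosh t)]$ sweeping out $N$ as the negative lines of $W$) that the paper's proof leaves implicit.
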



\begin{proof}
If $W\subset V$ is an $\R$-subspace containing a negative vector, then $N:=[W]\cap \mathbf{L}_{\mathbb{H}}^m$ is totally geodesic submanifold of $\mathbf{L}_{\mathbb{H}}^m$ and all totally geodesic subspaces arise in this way.
When $W$ is totally real (resp. totally complex, totally quaternionic), the quaternion-Hermitian metric on  $\mathbf{L}_{\mathbb{H}}^m$ given in \eqref{eqn:hypmetric} induces a Riemannian (resp. Hermitian, quaternion-Hermitian) metric on $N$ and the restriction of the hyperbolic metric \eqref{eqn:hypmetric2} to $N$ then
assumes familiar form of the hyperbolic metric on the negative line model of real (resp. complex, quaternionic) hyperbolic space.  
Conversely, when the induced Riemannian metric on $N$ is quaternionic (resp. complex, real) hyperbolic, the tangent bundle comes equipped with almost quaternionic (resp. almost complex, no complex) structure, which implies $W$ is totally quaternionic (resp. totally complex, totally real).
\end{proof}


\section{Background on Hermitian Forms over Division Algebras}

\qquad To construct lattices in $\mathbf{Sp}(m,1)$, and more generally in Lie groups of type $C_n$, we use Hermitian forms over division algebras over number fields.  In this section we introduce the algebra of these objects which we use in later sections.
We introduce the following notation:
\begin{itemize}
\item $k$ is a number field with places $V_k$ and infinite places $V_k^\infty$,
\item $\mathcal{O}_k$ is its ring of integers,
\item $D=\left(\frac{a,b}{k}\right)$ is a quaternion algebra with center $k$ and standard involution denoted by $\alpha\mapsto \overline{\alpha}$,
\item $\varphi_D=\langle 1,-a,-b,ab\rangle$ is the norm form of $D$,
\item $V$ is an $n$-dimensional vector space over $D$,
\item $h:V\times V\to D$ is a Hermitian form relative to the standard involution of $D$,
\item for each $v\in V_k$, $(V_v,h_v)$ is the extension of $h$ to $V_v:=V\otimes_k k_v$.
\end{itemize}

Recall that since $h$ is Hermitian it satisfies $\overline{h(x,y)}=h(x,y)$.  
For all $x\in V$, $q_h(x):=h(x,x)=\overline{h(x,x)}$ and hence $q_h(x)\in k$.  
Viewing $V$ as a $4n$-dimensional vector space over $k$, $(V,q_h)$ is a quadratic space and $q_h$ is called the \textbf{trace form} of $h$.  
Given any orthogonal basis, $h$ can be represented by a diagonal matrix $\mathrm{diag}( a_1,\ldots, a_n)$ with entries in $k$.
Let $q$ denote the $n$-dimensional quadratic form $q=\langle a_1,\ldots, a_n\rangle$ over $k$, which we  call the \textbf{restriction form} of $h$ to $k$.
A quick computation shows  $q_h= \varphi_D\otimes q$.  %
We will frequently make use of the fact that two Hermitian forms $h_1$ and $h_2$ over $D$ are isometric as Hermitian forms over $D$ if and only if $q_{h_1}$ and $q_{h_2}$ are isometric as quadratic forms over $k$ \cite[Theorem 10.1.7]{Sch}.

\qquad 
For a nonarchimedean place $v\in V_k$, we let $(a,b)$ denote the Hilbert symbol of $a,b\in k_v$.
A quadratic form over a nonarchimedean field is completely determined by three invariants: its dimension ($\dim$), determinant, ($\det$), and Hasse--Minkowski invariant ($c$) \cite[Theorem 63:23]{OM}.  

\begin{lem}\label{localforms}
Let $v\in V_k$ be a nonarchimedean place and let $D=\left(\frac{a,b}{k_v}\right)$ be a quaternion division algebra over $k_v$.  
If $h_v$ is an $m$-dimensional Hermitian form over $D$ and $q_{h_v}$ is its trace form,
then
\begin{align*}
\dim q_{h_v} &= 4\dim h_v,\\
\det q_{h_v} &= 1,\\
c(q_{h_v}) &= (a,b)^m(-1,-1)^m.
\end{align*}
In particular,  $c(q_{h_v})=1$ when $m$ is even.
\end{lem}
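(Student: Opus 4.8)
The plan is to compute the three quadratic-form invariants of the trace form $q_{h_v}$ by exploiting the factorization $q_{h_v} = \varphi_D \otimes q$ established earlier in the section, where $\varphi_D = \langle 1, -a, -b, ab\rangle$ is the norm form of $D$ and $q = \langle a_1, \ldots, a_m\rangle$ is the restriction form of $h_v$. First I would handle the dimension: since $\dim \varphi_D = 4$ and $\dim q = m$, the tensor product has dimension $4m$, which gives $\dim q_{h_v} = 4\dim h_v$ immediately.

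For the determinant, I would use the standard multiplicativity of the determinant under tensor products of quadratic forms, namely that $\det(\varphi \otimes \psi) = (\det \varphi)^{\dim \psi}(\det \psi)^{\dim \varphi}$ in $k_v^\times / (k_v^\times)^2$. Here $\det \varphi_D = a^2 b^2 \equiv 1$ is a square, so $(\det \varphi_D)^{m} = 1$, while $(\det q)^{\dim \varphi_D} = (\det q)^4$ is automatically a fourth power and hence a square. Therefore $\det q_{h_v} = 1$ regardless of the $a_i$; this is the reflection of the fact that the trace form of a Hermitian form over a quaternion algebra always has trivial discriminant.

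The Hasse--Minkowski invariant is the main computation and the step I expect to require the most care. I would use the formula for the Hasse invariant of a tensor product, which expresses $c(\varphi \otimes q)$ in terms of $c(\varphi_D)$, $c(q)$, the dimensions, and Hilbert symbols of the respective determinants. The key simplifying observations are that $\varphi_D$ is the norm form of a quaternion \emph{division} algebra, so it is anisotropic and $c(\varphi_D) = (a,b)$, and that $\det \varphi_D$ is a square, which kills many of the cross terms involving Hilbert symbols of $\det \varphi_D$ against $\det q$. After substituting and collecting, the $q$-dependent contributions should cancel (using $\det q_{h_v}=1$ and the squareness of $\det\varphi_D$), leaving only a power of $(a,b)$ together with a power of $(-1,-1)$ arising from the repeated self-pairing of the four-dimensional factor $\varphi_D$ across the $m$ copies. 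This yields $c(q_{h_v}) = (a,b)^m(-1,-1)^m$.

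The final assertion that $c(q_{h_v}) = 1$ when $m$ is even is then immediate, since both $(a,b)^m$ and $(-1,-1)^m$ become squares of Hilbert symbols, and Hilbert symbols take values in $\{\pm 1\}$, so any even power is trivial. I anticipate that the only genuine obstacle is bookkeeping the Hilbert-symbol identities in the tensor-product Hasse formula correctly; one has to be attentive to the convention for $c$ (the product $\prod_{i<j}(a_i,a_j)$ over a diagonalization) and to the fact that $q$ need not be diagonalized compatibly with $\varphi_D$, so I would argue invariantly via the tensor-product formula rather than by diagonalizing $q_{h_v}$ by hand.
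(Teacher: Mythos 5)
Your proposal is correct and follows essentially the same route as the paper: both exploit the factorization $q_{h_v}=\varphi_D\otimes q$, read off the dimension and determinant immediately (using $\det \varphi_D=a^2b^2\equiv 1$ modulo squares), and reduce the Hasse--Minkowski invariant to Hilbert-symbol bookkeeping --- the paper simply carries out your tensor-product computation by hand, expanding $\varphi_D\otimes q=\langle 1\rangle q\perp\langle -a\rangle q\perp\langle -b\rangle q\perp\langle ab\rangle q$ and iterating $c(q_1\perp q_2)=c(q_1)\,c(q_2)\,(\det q_1,\det q_2)$, whereupon all $q$-dependent contributions cancel exactly as you predict (one checks that $\prod_{\mathrm{blocks}} c$ collapses because $(-a)(-b)(ab)\equiv 1$, and the pairwise determinant symbols reduce to $\bigl((-a)^m,(-b)^m\bigr)\bigl((-a)^m,(ab)^m\bigr)\bigl((-b)^m,(ab)^m\bigr)$). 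One small correction: with the convention $c(q)=\prod_{i<j}(a_i,a_j)$ that you yourself fix, the norm form satisfies $c(\varphi_D)=(a,b)(-1,-1)$, not $(a,b)$ --- the latter is its Clifford/Witt invariant in a different normalization --- and the factor $(-1,-1)^m$ in the lemma is precisely the contribution of $c(\varphi_D)^m$, rather than arising from cross-terms of the tensor expansion as you suggest; this does not affect your final formula, but you should keep the convention consistent when collecting terms.
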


\begin{proof} We directly compute each.
\begin{align*}
\dim (q_{h_v})&=\dim(\varphi_D\otimes q)\\
&=\dim(\langle 1, -a, -b, ab\rangle \otimes q)\\
&=\dim\left(\langle 1\rangle q\oplus \langle -a\rangle q\oplus \langle -b\rangle q\oplus \langle ab\rangle q\right)\\
&= 4\dim h_v\\
& \\\det (q_{h_v})&=\det(\varphi_D\otimes q)\\
&=\det(\langle 1, -a, -b, ab\rangle \otimes q)\\
&=\det\left(\langle 1\rangle q\oplus \langle -a\rangle q\oplus \langle -b\rangle q\oplus \langle ab\rangle q\right)\\
&=(\det q)((-a)^m\det q)((-b)^m\det q)((ab)^m\det q)\\
&= 1\\
& \\
\pagebreak
c(q_{h_v})&=c(\varphi_D\otimes q)\\
&=c(\langle 1, -a, -b, ab\rangle \otimes q)\\
&=c\left(\langle 1\rangle q\oplus \langle -a\rangle q\oplus \langle -b\rangle q\oplus \langle ab\rangle q\right)\\
&=c(q)\ c( \langle -a\rangle q\oplus \langle -b\rangle q\oplus \langle ab\rangle q)\ (d, d)\\
&=c(q)\ c( \langle -a\rangle q)\ c(\langle -b\rangle q\oplus \langle ab\rangle q)\ ((-a)^md, (-a)^m)\ (d, d)\\
&=c(q)\ c( \langle -a\rangle q)\ c(\langle -b\rangle q) \ c(\langle ab\rangle q)\ ((-b)^md, (ab)^md)\ ((-a)^md, (-a)^m)\ (d, d)\\
&=c(q)\ c( \langle -a\rangle q)\ c(\langle -b\rangle q) \ c(\langle ab\rangle q)\ (d, d) (d, (-a)^m) ((-b)^m,a^m) \ ((-a)^md, (-a)^m)\ (d, d)\\
&=c(q)\ c( \langle -a\rangle q)\ c(\langle -b\rangle q) \ c(\langle ab\rangle q)\  ((-b)^m,a^m) \ ((-a)^m, (-a)^m)\\&=c(q)\ c( \langle -a\rangle q)\ c(\langle -b\rangle q) \ c(\langle ab\rangle q)\  (a^m, b^m) , (a^m, (-1)^m) \ ((-1)^m, (-1)^m) (a^m, a^m)\\
&=c(q)\ c( \langle -a\rangle q)\ c(\langle -b\rangle q) \ c(\langle ab\rangle q)\  (a, b)^{m^2}(-1, -1)^{m^2}\\
&=(c(q))^4 (-a,e) (-b,e) (ab,e)\  (a, b)^{m^2}(-1, -1)^{m^2}\\
&=(a, b)^m(-1, -1)^m
\end{align*}
\end{proof}

\qquad
 Lemma \ref{localforms} allows us to give a proof of  the following known result (see \cite[10.1.8 (ii)]{Sch}).

\begin{prop}\label{uniquelocalHermitian}
Over a nonarchimidean local field, there exists a unique Hermitian form of dimension $m$ over the unique quaternion division algebra.
\end{prop}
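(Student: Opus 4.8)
The plan is to show existence and uniqueness separately, using the correspondence between Hermitian forms over $D$ and their trace forms established earlier via \cite[Theorem 10.1.7]{Sch} together with the explicit local invariants computed in Lemma \ref{localforms}. For existence, I would simply observe that the diagonal Hermitian form $\langle 1, 1, \ldots, 1\rangle$ of dimension $m$ over $D$ exists over any field, so the only real content is uniqueness. Throughout, let $k_v$ be the nonarchimedean local field and $D = \left(\frac{a,b}{k_v}\right)$ the unique quaternion division algebra over it.

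For uniqueness, let $h_1$ and $h_2$ be two $m$-dimensional Hermitian forms over $D$. By \cite[Theorem 10.1.7]{Sch}, it suffices to prove that their trace forms $q_{h_1}$ and $q_{h_2}$ are isometric as quadratic forms over $k_v$. Since $k_v$ is nonarchimedean, by \cite[Theorem 63:23]{OM} a quadratic form is determined by its dimension, determinant, and Hasse--Minkowski invariant. Lemma \ref{localforms} computes all three of these invariants for a trace form $q_{h_v}$ of an $m$-dimensional Hermitian form, and crucially the outputs depend \emph{only} on $m$, $a$, $b$, and $k_v$ --- not on the particular Hermitian form $h_v$. Explicitly, $\dim q_{h_i} = 4m$, $\det q_{h_i} = 1$, and $c(q_{h_i}) = (a,b)^m(-1,-1)^m$ for $i = 1,2$. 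Therefore $q_{h_1}$ and $q_{h_2}$ have identical invariants, hence are isometric over $k_v$, hence $h_1 \cong h_2$ over $D$.

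I do not anticipate a serious obstacle here, as the proposition is essentially an immediate consequence of the three-invariant computation in Lemma \ref{localforms}. The one point requiring a moment of care is the logical direction: the whole argument hinges on the fact that the formulas in Lemma \ref{localforms} make no reference to the internal structure of $h_v$ beyond its dimension, so any two Hermitian forms of the same dimension automatically produce trace forms with matching invariants. I would state this observation explicitly rather than leaving it implicit, since it is precisely what forces uniqueness. One should also note that $D$ being a division algebra (rather than split) is what pins down $a,b$ up to the relation defining the quaternion algebra, so that $(a,b)^m(-1,-1)^m$ is a well-defined quantity independent of choices; but since the statement fixes ``the unique quaternion division algebra,'' this is already built into the hypotheses and needs no further verification.
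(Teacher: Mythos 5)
Your proof is correct and follows the paper's own argument essentially verbatim: both deduce from Lemma \ref{localforms} that the invariants $\dim$, $\det$, and $c$ of the trace form depend only on $m$ and $D$, then invoke \cite[Theorem 63:23]{OM} to get isometry of trace forms and \cite[Theorem 10.1.7]{Sch} to transfer this back to the Hermitian forms. Your explicit remarks on existence via $\langle 1,\ldots,1\rangle$ and on the invariants' independence of the internal structure of $h_v$ are sound additions, though the paper leaves them implicit.
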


\begin{proof}
The local invariants $\dim$, $\det$, and $c$ of a trace form $q_{h_v}$ are totally determined by $m$ and $D$.
Therefore the trace forms of two $m$-dimensional Hermitian forms over $D$ are isometric \cite[Theorem 63:23]{OM}, and hence the Hermitian forms themselves are isometric \cite[Theorem 10.1.7]{Sch}.
\end{proof}


\section{Constructions of Irreducible Lattices}\label{section:construction}

\qquad 
There are three flavors of noncompact building block Lie groups of type $C_n$ corresponding to the three types \cite{TitsClassification} of $\R$-simple, $\R$-isotropic, algebraic $\R$-groups of type $C_n$:
\begin{enumerate}
\item $\mathbf{Sp}_{2n}(\R)$ - The split form over $\R$.  It comes from a $2n$-dimensional symplectic form over $\R$.  This group has $\R$-rank $n$.
\item  $\mathbf{Sp}(p,n-p)$, $0< p< n$ - The non-split forms over $\R$.  They come from $n$-dimensional Hermitian form over $\mathbb{H}$.  This group has $\R$-rank $\min\{p,n-p\}$.
\item  $\mathbf{Sp}_{2n}(\C)$ - The split form over $\C$.  It comes from a $2n$-dimensional symplectic form over $\C$.  This group has $\R$-rank $2n$.
\end{enumerate}
A \textbf{group of type $C_n$ (with no compact factors)} is isogenous to a product  $$G=(\mathbf{Sp}_{2n}(\R))^q \times \prod_{i=1}^r \mathbf{Sp}(p_i,n-p_i) \times (\mathbf{Sp}_{2n}(\C))^s$$ where for each $i$, $0<p_i<n$, and $p,q,r$ are nonnegative integers.
In this section, our goal is to give constructions of the irreducible lattices in groups of type $C_n$, $n\ge 3$.
Recall that a lattice in such a product is \textbf{irreducible} if it is not commensurable to a product of lattices coming from factors \cite{Raghunathan}.
All such irreducible lattices in $G$ are arithmetic \cite{Mar} \cite{GS}.
As such we may use the classification of algebraic groups of type $C_n$ over number fields \cite{TitsClassification} to construct all irreducible lattices.



\qquad There are two types of lattices, depending on whether the initial $k$-group is $k$-split.  
While we give both constructions, the nonsplit-type lattices will be of more interest to us since they are the lattices that give the fundamental groups of finite volume quaternionic hyperbolic orbifolds.

\begin{con}[Split-type Irreducible Lattices]\label{arithmeticsplitnlattice}${}$
\begin{enumerate}
\item Let $\mathbf{G}=\mathbf{Sp}_{2n}(k)$ and $G=(\mathbf{Sp}_{2n}(\R))^q  \times (\mathbf{Sp}_{2n}(\C))^s$ where
	\begin{itemize}
		\item $q$ is the number of real places of $k$ and
		\item $s$ is the number of complex places of $k$.
	\end{itemize} 
	Observe $G$ is a semisimple Lie group with no compact factors and is simple when $q=1$ and $s=0$.
\item Via the diagonal embedding, $\mathbf{Sp}_{2n}(\mathcal{O}_k)$ sits inside $G$ as an arithmetic lattice.
\item Let $\Gamma\subset G$ be a subgroup commensurable up to $G$-automorphism with $\mathbf{Sp}_{2n}(\mathcal{O}_k)$.  Then  $\Gamma$ is an \textbf{irreducible lattice of split-type $C_n$}.
\item Let  $K\subset G$ be a maximal compact subgroup and let $M_{\Gamma}:=\Gamma \backslash G /K$.  Then
\begin{enumerate}
\item $M_\Gamma$ is a \textbf{locally symmetric orbifold of split-type $C_n$.}   
\phantomsection
\label{fieldalgebradef2}
\item  $k(M_\Gamma):=k$ is the \textbf{field of definition} of $M_{\Gamma}$.  
\end{enumerate}
\end{enumerate}
\end{con}

\begin{con}[Nonsplit-type Irreducible Lattices]\label{arithmeticcnlattice}${}$
\begin{enumerate}
 \item Let $\mathbf{G}:=\mathbf{SU}(V,h)$ be the absolutely almost simple $k$-group defined by $(V,h)$ and let $SU(h):=\mathbf{G}(k)$.
\item Let $(m_+^{(v)}, m_-^{(v)})$ denote the signature of $(V_v,h_{v})$ for each real $v\in V_k^\infty$ where $D$ ramifies.
 \item Let $\mathbf{G}_{v}$ denote the algebraic $k_{v}$-group $\mathbf{SU}(V_{v},h_{v})$ for each  $v\in V_k^\infty$.
\begin{itemize}
\item If $v$ is real and $D_v$ splits, then $\mathbf{G}_{v}(\R)\cong \mathbf{Sp}_{2n}(\R)$.  
\item If $v$ is real and $D_v$ ramifies, then $\mathbf{G}_{v}(\R)\cong \mathbf{Sp}(m_+^{(v)}, m_-^{(v)})$.  
\item If $v$ is complex, then $\mathbf{G}_{v}(\R)\cong \mathbf{Sp}_{2n}(\C)$. 
\end{itemize}
\item Let $\mathbf{G}':=R_{k/\Q}\mathbf{G}$  be the semisimple $\Q$-group formed by restriction of scalars.   Then $\mathbf{G}'(\R)=\prod \mathbf{G}_{v}(\R)$ is a semisimple Lie group which has compact factors at precisely the real places where $h$ is anisotropic.  
Furthermore, there is an isomorphism $\mathbf{G}(k)\cong \mathbf{G}'(\Q)$, and hence there is a natural diagonal embedding $SU(h)\to  \mathbf{G}'(\R)$.
\item Let $G$ be the projection of $\mathbf{G}'(\R)$ onto its noncompact factors and denote the projection map by $\pi:\mathbf{G}'(\R)\to G$.   Observe that $G$ is a semisimple Lie group with no compact factors and is simple when $q+r=1$ and $s=0$.
\item Fix an order $\mathcal{O}_D$ in $D$ and an $\mathcal{O}_D$-lattice $L\subset V$,  and let $G_L=\{T\in \mathbf{G}(k) \ | \ T(L)\subset L\}$.
Then $G_L$ sits as a discrete arithmetic subgroup of $\mathbf{G}'(\R)$.
\item Let $\Gamma\subset G$ be a subgroup commensurable up to $G$-automorphism with $\pi(G_L)$.  Then $\Gamma$ is an \textbf{irreducible lattice of nonsplit-type $C_n$}.
\item Letting
	\begin{itemize}
		\item $q$ be the number of real places of $k$ where $D$ splits,
		\item $r$ be the number of real places of $k$ where $D$ ramifies and $h$ is isotropic,  
		\item $s$ be the number of complex places of $k$, and
		\item $p_i=m_+^{(v_i)}$ where $\{v_1, \ldots, v_r\}$ is the set of real places where $D$ ramifies and $h$ is isotropic,
	\end{itemize} 
we have the following diagram illustrating our construction of irreducible arithmetic lattices in $G$:
\begin{center}
\begin{tikzpicture}[scale=1.5]
\node (A) at (-1,1) {$SU(h)$};
\node (B) at (4.5,1) {$\left(
\displaystyle\prod\limits_{\substack{v \ \mathrm{real} \\ D\ \mathrm{splits}}} \mathbf{Sp}_{2n}(\R) \quad \times 
\displaystyle\prod\limits_{\substack{v \ \mathrm{real} \\ D\ \mathrm{ramifies}}} \mathbf{Sp}(m_+^{(v)}, m_-^{(v)}) \quad \times 
\displaystyle\prod\limits_{v\ \mathrm{complex}}\mathbf{Sp}_{2n}(\C)\right)$};
\node (C) at (-2.5,1) {$G_L$};
\node (D) at (4.5,-1.5) {$G= (\mathbf{Sp}_{2n}(\R))^q  \times 
\prod_{i=1}^r \mathbf{Sp}(p_i,n-p_i)  \times 
(\mathbf{Sp}_{2n}(\C))^s$};
\node (E) at (-2.5,-1.5) {$\Gamma$};
\path[right hook->,font=\scriptsize,>=angle 90]
(A) edge node[above]{diagonal} (B)
(C) edge node[above]{} (A)
(E) edge node[below]{with $\pi(G_L)$}  (D)
(E) edge node[above]{Commensurable (up to $G$-automorphism)}  (D);
\path[->,font=\scriptsize,>=angle 90]
(C) edge node[below]{lattice} (D)
(B) edge node[right]{$\pi$} (D);
\end{tikzpicture}
\end{center}	
\item Let  $K\subset G$ its maximal compact subgroup and let $M_{\Gamma}:=\Gamma \backslash G /K$.  Then
\begin{enumerate}
	\item $M_\Gamma$ is a \textbf{locally symmetric orbifold of nonsplit-type $C_n$},
	\phantomsection
	\label{fieldalgebradef}
	\item $k(M_\Gamma):=k$ is the \textbf{field of definition} of $M_{\Gamma}$, and
	\item  $D(M_{\Gamma}):=D$ is the \textbf{algebra of definition} of $M_{\Gamma}$.  
\end{enumerate} 
\end{enumerate}
\end{con}
A choice of another order $\mathcal{O}_D'$ in $D$, another $\mathcal{O}_D'$-lattice $L'\subset V$, and another group $\Gamma'$ commensurable up to $G$-automorphism with $\pi(G_{L'})$ will produce a space $M_{\Gamma'}$ which is commensurable with $M_{\Gamma}$. 
Hence choosing $h$ determines a commensurability class of orbifolds which we denote by $M_h$.

\begin{rem}\label{rem:minfielddef}
By \cite[Lemma 2.6]{PrasadRapinchukWC}, when $M_\Gamma$ is simple, $k(M_\Gamma)$ coincides with the minimal field of definition of $\Gamma$ in the sense of Vinberg \cite{Vinberg}.
\end{rem}

 \begin{rem}\label{rem:galoisaction}
The above two constructions show that one obtains a commensurability class by choosing $k$ and $\mathbf{G}$.
If $k'$ and $\mathbf{G}'$ also give the same commensurability class, by \cite[Proposition 2.5]{PrasadRapinchukWC}, there is a field isomorphism $\tau:k'\to k$ and a $k$-rational isomorphism 
$\phi:\mathbf{G}\to \mathbf{G}'\times_{\mathrm{Spec}\, k'}\mathrm{Spec}\, k$ where $\mathbf{G}'\times_{\mathrm{Spec}\, k'}\mathrm{Spec}\, k$  is the associated base change from $k'$ to $k$.
\begin{center}
\begin{tikzpicture}[scale=1.5]
\node (A) at (-1,1) {$\mathbf{G}'\times_{\mathrm{Spec}\, k'}\mathrm{Spec}\, k$};
\node (B) at (1.5,1) {$\mathbf{G}'$};
\node (C) at (-1,0) {$\mathrm{Spec}\, k$};
\node (F) at (-3.5,0) {$\mathrm{Spec}\, k$};
\node (D) at (1.5,0) {$\mathrm{Spec}\, k'$};
\node (E) at (-3.5,1) {$\mathbf{G}$};
\path[->,font=\scriptsize,>=angle 90]
(A) edge node[above]{} (B)
(E) edge node[above]{$\cong$} (A)
(E) edge node[below]{$\phi$} (A)
(C) edge node[below]{$\tau^*$} (D)
(A) edge node[below]{}  (C)
(E) edge node[below]{}  (F)
(F) edge node[below]{Id}  (C)
(B) edge node[below]{}  (D);
\end{tikzpicture}
\end{center}
 \end{rem}

\begin{rem}\label{rem:arithmeticsplitnlattice}
With Remark \ref{rem:galoisaction} in mind, it follows that a choice of $k$ and $n$ determines a commensurability class of an $M_{\Gamma}$ of split-type.   
\end{rem}

\begin{rem}\label{necessary}
By going through the above construction, we identify the following necessary and sufficient additional assumptions to make  $M_{\Gamma}$ quaternion hyperbolic.  
\begin{enumerate}
\item $k$ is totally real (i.e., no $\mathbf{Sp}_{2n}(\C)$ terms)
\item $D$ ramifies at all real places (i.e., no $\mathbf{Sp}_{2n}(\R)$ terms)
\item $h$ is anisotropic at all but one real place, $v_0$ (i.e., only one $\mathbf{Sp}(p_i, n-p_i)$ term)
\item $h\otimes k_{v_0}$ has signature $(n-1,1)$.  (i.e., $G=\mathbf{Sp}(n-1,1)$)
\end{enumerate}
In this case, $M_{\Gamma}$ will be a $4m$-dimensional quaternion hyperbolic orbifold where $m=n-1$.
\end{rem}

\begin{Def}\label{admissibledef}
Let $k$ be a number field with a distinguished infinite place $v_0\in V_k$ and $D\in \mathrm{Br}(k)$ be a quaternion division algebra.
We call the triple $(k,v_0, D)$ \textbf{admissible} if 
$k$ is totally real  and $D$ ramifies at all real places of $k$.
Two admissible triples $(k,v_0,D)$ and $(k',v_0',D')$ are \textbf{equivalent} if there exists a field isomorphism
$\tau:k'\to k$ sending $v'_0$ to $v_0$  and an isomorphism of $k$-algebras $\phi:D\to D\otimes_{k'}k$.
\begin{center}
\begin{tikzpicture}[scale=1.5]
\node (A) at (-1,1) {$D\otimes_{k'} k$};
\node (B) at (1.5,1) {$D'$};
\node (C) at (-1,0) {$k$};
\node (F) at (-3.5,0) {$k$};
\node (D) at (1.5,0) {$k'$};
\node (E) at (-3.5,1) {$D$};
\path[->,font=\scriptsize,>=angle 90]
(B) edge node[above]{} (A)
(E) edge node[above]{$\cong$} (A)
(E) edge node[below]{$\phi$} (A)
(D) edge node[below]{$\tau$} (C)
(C) edge node[below]{}  (A)
(F) edge node[below]{}  (E)
(F) edge node[below]{Id}  (C)
(D) edge node[below]{}  (B);
\end{tikzpicture}
\end{center}
\end{Def}

\qquad
If one wishes to think of $k$ living in $\C$, one may view the distinguished place $v_0$ as the identity embedding.
We will sometimes refer to an \textbf{admissible pair} $(k,D)$ when $v_0$ is clear.

\qquad
It is not hard to see from Remark \ref{necessary} and Construction \ref{arithmeticcnlattice} that given $m\ge2$ and an admissible triple $(k,v_0,D)$, one may build a commensurability class of quaternionic hyperbolic $4m$-orbifolds by considering the $m$-dimensional Hermitian space $(V,h)$ over $D/k$ that has signature $(m+1,0)$ at all but the distinguished real place, where it has signature $(m,1)$.
(Note that by Lemma \ref{uniquelocalHermitian} and \cite[Theorem 7.8.1]{Sch}, $(V,h)$ is uniquely determined at all finite places.)
Hence for each $m\ge2$, this gives a surjective set map 
$$\mathcal{C}_m: \{\mbox{admissible triples}\} \to \{\mbox{commensurability classes of quaternionic hyperbolic $4m$-orbifolds}\}.$$ 


%

\begin{proof}[Proof of Theorem \ref{thrmC}]
In light of the proceeding remarks, all that remains to be shown is that if $\mathcal{C}_m(k,v_0,D)=\mathcal{C}_m(k',v_0',D')$, then $(k,v_0,D)$ and $(k',v_0',D')$ are equivalent.
Let $\mathbf{G}=\mathbf{SU}(V,h)$ and $\mathbf{G}'=\mathbf{SU}(V',h')$.
If $M_h$ and $M_{h'}$ are the same commensurability class,
then by \cite[Proposition 2.5]{PrasadRapinchukWC} and Remark \ref{rem:galoisaction}, there is a field isomorphism $\tau:k'\to k$ and a $k$-rational isomorphism $\phi:\mathbf{G}\to \mathbf{G}'\times_{\mathrm{Spec}\, k'}\mathrm{Spec}\, k$.
The result then follows.
\end{proof}

\section{Constructions of Totally Geodesic Subspaces}
\qquad 
In this section we provide constructions of two important classes of totally geodesic subspaces of a nonsplit-type orbifold $M:=M_\Gamma$, namely \textbf{subform subspaces} and \textbf{(real and complex) restriction subspaces}.  
These should be thought of as generalizations of quaternionic, complex, and real hyperbolic totally geodesic subspaces (\ref{totallydef}) of quaternionic hyperbolic space.
We shall continue to use the notation set forth in Construction \ref{arithmeticcnlattice}.

\begin{con}[Subform Subspaces]\label{subformsubspacedef}
Let $(W,r)$ be a Hermitian $k$-subspace of $(V, h)$.  
Then $\mathbf{H}=\mathbf{SU}(W,r)$ is an absolutely almost simple $k$-subgroup of $\mathbf{G}$. 
Let  $\mathbf{H}':=R_{k/\Q}\mathbf{H}$.  
Then $\mathbf{H'}$ is a semisimple $\Q$-subgroup of $\mathbf{G}'$.  
It follows that $L\cap W$ is an $\mathcal{O}_D$-lattice of $W$, hence $G_L\cap \mathbf{H}'(\R)$ is an arithmetic subgroup of $\mathbf{H}'(\R)$.  
Let $H$ be the image of $\mathbf{H}'(\R)$ under the projection map $\pi$.  
Then $\Lambda:=\pi(G_L\cap \mathbf{H}'(\R))$ is an arithmetic lattice of $H$.  
It follows that $N_{\Lambda}:=\Lambda\backslash H/(H\cap K)$ is commensurable to a totally geodesic subspace of $M_\Gamma$.  
We denote this commensurability class $N_r$.  
In what follows, we shall call such totally geodesic subspaces \textbf{subform subspaces}.  
Observe that for a subform subspace $N\subset M$, $k(N)=k(M)$ and $D(N)=D(M)$.  
Furthermore, if $\dim r\ge 2$ and $r$ is isotropic at a real place of $k$, then $N_r$ is a commensurability class of nontrivial, nonflat, finite volume, locally symmetric spaces of noncompact type.
\end{con}
%

%

\qquad 
Subform subspaces are always of type $C_l$ for $l<n$.  
However, spaces of type $C_n$ have many other types of totally geodesic subspaces.  
In particular, a space of type $C_n$ has totally geodesic subspaces of type $B_l/D_l$ coming from quadratic forms
and outer type ${}^2A_l$ coming from Hermitian forms over a quadratic extension $K/k$.
As we shall now show, these are constructed by restricting the Hermitian form to certain $k$-subspaces of $V$.

\begin{con}[Restriction Subspaces]\label{restrictionsubspacedef}
Fix an orthogonal $D$-basis of $V$ so that $h$ may be represented diagonally by $h=\langle a_1, a_2, \ldots, a_n\rangle$.  
Recall that the Hermitian condition implies that $a_i\in k$.  
Let $W$ denote the $k$-span of this basis and let $(W,q)$ denote the $n$-dimensional quadratic space over $k$ given by $q=\langle a_1, a_2, \ldots, a_n\rangle$.  
Then $(W,q)$ is obtained by restricting $h$ to $W$.
%

\qquad 
The quaternion algebra $D$ contains many quadratic extension of $k$.  
These are called the \textbf{maximal subfields of $D$}, and we denote the collection of isomorphism classes of maximal subfields of $D$ by $\mathrm{Max}(D)$.
If $K\subset D$ is a maximal subfield, then
there exists a pure quaternion $\delta\in D$ such that $K$ is isomorphic to $k(\delta)$.
Let $(W_\delta, h_\delta)$ denote the Hermitian space over $K/k$ given by the restriction of $h$ to $W_\delta:=W\oplus W\delta$.

\qquad
To summarize, we have the following inclusions of absolutely almost simple $k$-groups:
$$\mathbf{SO}(W,q)\subset \mathbf{SU}(W_\delta,h_\delta)\subset \mathbf{SU}(V,h).$$
which translated into inclusions of groups with the following Killing--Cartan types:
$$  \begin{cases}B_{(n-1)/2} &\mbox{ (if $n$ is odd) }\\ D_{n/2} &\mbox{ (if $n$ is even) }\end{cases}\subset {}^2A_{n-1}\subset C_n.$$
%
%

\qquad 
The totally geodesic subspace associated to these subgroups we call \textbf{maximal real (resp. complex) restriction subspaces}.
More generally, the subspaces of $M$ obtained by applying this procedure to a Hermitian subform $r$ of $h$ yields spaces we call \textbf{restriction subspaces}.
The choice of an order $\mathcal{O}_D\subset D$ and $\mathcal{O}_D$-lattice $L\subset D^n$ gives compatible lattices $W$ and $W_\delta$, thereby inducing compatible arithmetic subgroups in $\mathbf{SO}(W,q)$ and $\mathbf{SU}(W_\delta,h_\delta)$, and hence yielding finite volume totally geodesic orbifolds.
Furthermore these spaces have the same field of definition as the ambient space.
\end{con}

\section{Proof of Theorems \ref{mainthm:dimfield} and \ref{mainthm:commensurable}}

\qquad We shall continue the notation of earlier sections, in particular (potentially with subscripts) we let $\mathbf{G}$ denote an absolutely almost simple $k$-group of Killing--Cartan type $C_n$, $G$ its associated semisimple Lie group with no compact factors, and $M$ a finite volume locally symmetric orbifold in the commensurability class determined by $\mathbf{G}$.  
We begin by showing that $\Q TG(M)$ determines some basic structural information about $\mathbf{G}$, namely its absolute rank and whether or not it is $k$-split.

%

\begin{lem}\label{dim}
Let $M_1$ and $M_2$ be finite volume locally symmetric orbifolds of type $C_{n_1}$ and $C_{n_2}$ respectively,  $n_i\ge 3$.  If $\Q TG(M_1)=\Q TG(M_2)$, then
\begin{enumerate}
\item $n_1=n_2$, and
\item either $M_1$ and $M_2$ are both of split-type or both of nonsplit-type.
\end{enumerate}
\end{lem}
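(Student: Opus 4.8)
The idea is to recover the invariants $n_i$ and the split/nonsplit dichotomy from the order structure of the poset $\Q TG(M_i)$, since $\Q TG(M_1)=\Q TG(M_2)$ gives us only an equality of sets of commensurability classes of totally geodesic subspaces. The two invariants to extract are the absolute rank $n_i$ (the Killing--Cartan subscript) and whether $\mathbf{G}_i$ is $k$-split. The natural strategy is to read off the \emph{dimension} (and more refined geometric data) of the largest totally geodesic subspaces appearing in $\Q TG(M_i)$, because these maximal subspaces are forced by the construction to be of a predictable type.

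\textbf{Step 1: identify the top-dimensional subspaces.} First I would observe that among all nonflat, finite-volume, totally geodesic subspaces of $M_i$, the subform subspaces (Construction \ref{subformsubspacedef}) and the restriction subspaces (Construction \ref{restrictionsubspacedef}) are the ones that control the extremes. A space of type $C_n$ contains subform subspaces of type $C_l$ for every $l<n$, and the ambient dimension of $M_i$ is itself encoded in the dimensions of these maximal proper subspaces. Concretely, I would catalog the dimensions $\dim N$ as $N$ ranges over $\Q TG(M_i)$, and argue that the largest values occurring, together with their multiplicities and mutual containment structure, determine $n_i$ uniquely. Since both $M_1$ and $M_2$ have the same set of commensurability classes, these numerical invariants agree, giving $n_1=n_2$.

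\textbf{Step 2: detect $k$-splitness.} The split-type and nonsplit-type constructions produce genuinely different families of totally geodesic subspaces. In the nonsplit case the algebra of definition $D$ is a genuine division algebra, and the restriction subspaces of type ${}^2A_{n-1}$ (Hermitian forms over a quadratic extension $K/k$ with $K\in\mathrm{Max}(D)$) and of type $B_{(n-1)/2}$ or $D_{n/2}$ (from the restriction quadratic form $q$) appear with specific dimensions and ranks dictated by $D$ being division. In the split case, by contrast, $\mathbf{G}=\mathbf{Sp}_{2n}$ over $k$, and the corresponding subspaces sit differently. I would argue that the presence or absence of these characteristic ${}^2A$-type and orthogonal-type subspaces, as recorded by the dimensions and ranks in $\Q TG(M_i)$, distinguishes the two cases. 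Hence if $\Q TG(M_1)=\Q TG(M_2)$ the two spaces must lie in the same regime.

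\textbf{Main obstacle.} The delicate point will be Step 2 rather than Step 1: showing that the \emph{geometric} data recorded in $\Q TG(M_i)$---dimensions, ranks, and commensurability classes of maximal subspaces---genuinely separates split from nonsplit type, as opposed to these families accidentally producing the same numerical spectrum. This requires carefully tracking how the local invariants of $D$ (Lemma \ref{localforms}, Proposition \ref{uniquelocalHermitian}) and the signature conditions propagate to the totally geodesic subspaces, and verifying that no split-type space can mimic the totally geodesic spectrum of a nonsplit-type space of the same rank. The hypothesis $n_i\ge 3$ will be used to guarantee enough room for the distinguishing subspaces to exist and to rule out low-rank coincidences.
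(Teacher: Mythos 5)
Your Step 1 is, in outline, the paper's own argument and is recoverable: the paper compares maximal subform subspaces of type $C_{n-1}$ (chosen isotropic at an infinite place) and uses the fact that the Lie group attached to any class in $\Q TG(M_2)$ must sit, up to isogeny, inside $G_2$, so that if $n_1>n_2$ the simple factors of the subform group of $M_1$ are too large to embed, forcing $n_1=n_2$. The genuine gap is in Step 2, and it is not merely an omitted verification: the distinguishing invariant you propose would fail. You suggest separating split from nonsplit type by the \emph{presence or absence} of restriction subspaces of type ${}^2A_{n-1}$ (Hermitian over $K\in\mathrm{Max}(D)$) and of type $B_{(n-1)/2}$ or $D_{n/2}$. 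But split-type spaces contain totally geodesic subspaces of exactly these types as well: for any quadratic extension $K/k$ and Hermitian form $h'$ on $K^n$, the $k$-bilinear form $\mathrm{Tr}_{K/k}(\delta h')$, with $\delta$ trace-zero, is alternating, giving $\mathbf{SU}(h')\subset \mathbf{Sp}_{2n}$; and $\mathbf{SO}(q)\subset\mathbf{GL}_n\subset\mathbf{Sp}_{2n}$ via the Levi factor of the Siegel parabolic. So the ``characteristic'' families occur on both sides, and the accidental mimicry you flag as the main obstacle is precisely what happens at the level of dimensions, ranks, and archimedean signatures; no amount of cataloging that data closes the argument.

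What actually separates the two regimes in the paper is nonarchimedean, and it rests on a bridge your proposal never invokes: by \cite[Theorem 3.5]{M14spec}, commensurability of a totally geodesic subspace of $M_2$ with one of $M_1$ upgrades to a $\Q$-isogeny of $R_{k_2/\Q}\mathbf{H}_2$ onto a $\Q$-subgroup of $R_{k_1/\Q}\mathbf{G}_1$, where $\mathbf{H}_2=\mathbf{SU}(r)$ is the group of a maximal subform subspace, a Hermitian form over the division algebra $D_2$. Localizing via $R_{k_2/\Q}(\mathbf{H}_2)(\Q_p)\cong\prod_{v\mid p}\mathbf{H}_2(k_{2,v})$, at a prime lying under a finite place where $D_2$ ramifies one finds a nonsplit inner factor of type $C_{n-1}$, whereas the corresponding $\Q_p$-subgroups of the split group $R_{k_1/\Q}\mathbf{Sp}_{2n}$ are products of split groups of type $C_{n-1}$ --- a contradiction (this is also where $n\ge 3$ earns its keep, since for $n\le 2$ the nonsplit local form is too small to be obstructed). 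The residual case in which $D_2$ ramifies at every real place of a totally real $k_2$ (so $q=s=0$) is then handled at the archimedean places instead: a maximal subform group of the form $\prod_i\mathbf{Sp}(p_i',\,n_2-1-p_i')$ cannot be isogenous to a subgroup of $(\mathbf{Sp}_{2n_1}(\R))^{q_1}\times(\mathbf{Sp}_{2n_1}(\C))^{s_1}$. Your closing paragraph correctly gestures at ``local invariants of $D$'' via Lemma \ref{localforms}, but without the commensurability-to-isogeny step and the finite-place case analysis, Step 2 as written cannot be completed.
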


\begin{proof}
To begin, suppose $M_1$ is of split-type.  
Then we may take $\mathbf{G}_1=\mathbf{Sp}_{2n_1}(k_1)$ and let $N\subset M$ be the totally geodesic subspace associated to $\mathbf{Sp}_{2n_1-2}(k_1)$.
Since $N\in \Q TG(M_2)$,  $G_2$ contains copies of $\mathbf{Sp}_{2n_1-2}(\R)$ or $\mathbf{Sp}_{2n_1-2}(\C)$, and hence it follows that $n_1\le n_2$.
If $M_2$ is also of split-type, then by symmetry of argument, $n_1=n_2$.  

\qquad We now show that $M_2$ cannot be of nonsplit-type.
We do so by contradiction, namely suppose then that $M_2$ is of nonsplit-type.  
We may take $\mathbf{G}_2=\mathbf{SU}(h_2)$ for some $n_2$-dimensional Hermitian form over a quaternion division algebra $D_2$ over a number field $k_2$.
If $k_2$ has a complex place, or if $D_2$ splits at a real place, (i.e., $q\ne0$ or $s\ne 0$), then we may choose a maximal subform subspace whose isometry group contains $\mathbf{Sp}_{2n_2-2}(\R)$ or $\mathbf{Sp}_{2n_2-2}(\C)$, hence $G_1$ contains $\mathbf{Sp}_{2n_2-2}(\R)$ or $\mathbf{Sp}_{2n_2-2}(\C)$,
from which it follows that $n_1= n_2=:n$.  
Let $\mathbf{H}_2\subset \mathbf{G}_2$ denote the algebraic $k_2$-group associated to a maximal subform subspace of $M$.  
By assumption $R_{k_2/\Q}(\mathbf{H}_2)$ is $\Q$-isogenous to a $\Q$-subgroup $\mathbf{H}'\subset R_{k_1/\Q}(\mathbf{G}_1)$ \cite[Theorem 3.5]{M14spec}.
Writing $k_2\otimes_{\Q}\Q_p =\oplus_{v|p}k_{2,v}$, it is well known \cite[2.1.2]{PlRa} that $R_{k_2/\Q}(\mathbf{H}_2)(\Q_p)\cong \prod_{v|p}\mathbf{H}_2(k_{2,v})$.
In particular, for primes over which there is a place $v\in V_{k_2}$ where $D_2$ ramifies, this is a product of $k_{2,v}$-groups of type $C_{n-1}$, some of which are not $k_{2,v}$-split.
However, all $\Q_p$-subgroup $\mathbf{H}'(\Q)$ are a product of split groups over $k_{1,v}$ of type $C_{n-1}$.
Hence these two groups cannot have the same localizations, and therefore these could not have been $\Q$-isogenous.
We have found a subform subspace of $M_2$ not commensurable to a subspace of $M_1$, thereby producing a contradiction.  
%
We now have only to check the case when $D_2$ ramifies at every real place (i.e., $q=0$ and $s=0$).
Then $G=\prod_1^{r_2} \mathbf{Sp}(p_i,n_2-p_i)$, and we may choose a maximal subform subspace which is also of this form.
Then the group $\prod_1^{r_2} \mathbf{Sp}(p'_i,n_2-1-p_i')$, where $p_i'$ is either $p_i$ or $p_1-1$, sits as a Lie subgroup of $(\mathbf{Sp}_{n_1}(\R))^{q_1}\times (\mathbf{Sp}_{n_1}(\C))^{s_1}$, where $n_1\le n_2$, but this cannot happen.
We have therefore shown (b).

\qquad We have left to prove (a) when both spaces are of nonsplit-type.  
We may now take $\mathbf{G}_i:=\mathbf{SU}(h_i)$, $i=1,2$, where  $h_i$ are $n_i$-dimensional Hermitian forms over quaternion algebras $D_i$ over number fields $k_i$.  
We shall prove the contrapositive.  
If $n_1\ne n_2$, then (potentially after relabeling), $\dim h_1>\dim h_2$.  
Let $v_0\in V_{k_1}$ be an infinite place where $h_{1,v_0}:=h_1\otimes_{k_1} k_{1, v_0}$ is isotropic.  
By deleting one entry in a diagonal representation of $h_1$, we have an $(n_1-1)$-dimensional Hermitian form, which we denote $r$, over $D_1$ which is isotropic over $k_{1,v_0}$.
Let $\mathbf{H}:=\mathbf{SU}(r)$, $H$ denote the projection of $R_{k_1/\Q}(\mathbf{H})(\R)$ via $\pi$, and $N$ denote the corresponding subform subspace.
Considering the dimensions of the simple factors of $H$,  it follows that $H$ cannot be isogenous to a proper subgroup of $G_2$.
By construction $N$ a nonflat finite volume totally geodesic subspace of $M_1$ which $N$ cannot be commensurable a proper totally geodesic subspace of $M_2$.   
The result then follows.
\end{proof}

\qquad When we restrict to the cases where $G$ is simple, $\Q TG(M)$ determines additional structural information of $\mathbf{G}$, namely its $\R$-rank.

\begin{lem}\label{rank}
Let $M_1$ and $M_2$ be simple  finite volume locally symmetric orbifolds of type $C_{n_1}$ and $C_{n_2}$ respectively,  $n_i\ge 2$.  If $\Q TG(M_1)=\Q TG(M_2)$, then
$\mathrm{rank} (M_1)=\mathrm{rank} (M_2)$
\end{lem}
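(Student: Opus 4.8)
The goal is to show that if $\Q TG(M_1)=\Q TG(M_2)$ for simple locally symmetric orbifolds of type $C_{n_1}$ and $C_{n_2}$ with $n_i\ge 2$, then $\mathrm{rank}(M_1)=\mathrm{rank}(M_2)$. By Lemma \ref{dim} (applied when $n_i\ge 3$) we already know $n_1=n_2=:n$ and that $M_1,M_2$ are both split-type or both nonsplit-type. In the split case there is nothing to do: a split group of type $C_n$ over any field has $\R$-rank exactly $n$, so the ranks automatically agree. Thus the content is entirely in the nonsplit case, and the strategy is to recover $\mathrm{rank}(M_i)=\min\{p_i,\, n-p_i\}$ (the $\R$-rank of $\mathbf{Sp}(p_i,n-p_i)$) purely from the totally geodesic spectrum. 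Since $M_i$ is simple and nonsplit, we may write $\mathbf{G}_i=\mathbf{SU}(h_i)$ where $h_i$ is an $n$-dimensional Hermitian form over a quaternion division algebra $D_i/k_i$ that ramifies at every real place except possibly contributing a single noncompact factor $\mathbf{Sp}(p_i,n-p_i)$.

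\textbf{Key steps.} First I would establish that the $\R$-rank is detected by the largest subform subspace of \emph{quaternionic} type that $M_i$ contains. Concretely, one builds subform subspaces (Construction \ref{subformsubspacedef}) by passing to Hermitian subforms $r$ of $h_i$ of decreasing dimension; the signature of $r$ at the distinguished isotropic place controls the Witt index, and hence the $\R$-rank of the associated factor $\mathbf{Sp}(p',\dim r - p')$. The key numerical observation is that $\mathrm{rank}(M_i)=\min\{p_i,n-p_i\}$ equals the maximal $\ell$ for which $M_i$ contains a totally geodesic subspace of type $C_\ell$ that is itself \emph{isotropic of full Witt index}, equivalently a maximal totally geodesic subspace isometric to a product of rank-one pieces glued along hyperbolic planes. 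Second, I would argue that this maximal $\ell$ is an invariant of $\Q TG(M_i)$: because $\Q TG(M_1)=\Q TG(M_2)$, every such subform subspace of $M_1$ is commensurable to a totally geodesic subspace of $M_2$, and commensurable locally symmetric spaces have isometric universal covers, hence equal $\R$-rank. Running the argument symmetrically forces the maximal isotropic subform dimension — and therefore the $\R$-rank — to coincide.

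\textbf{The main obstacle.} The delicate point is \emph{not} merely producing the subform subspaces but verifying that the $\R$-rank is genuinely a spectral invariant rather than an artifact of the chosen form. The difficulty is that a totally geodesic subspace $N\subset M_2$ realizing a given commensurability class need not arise as a subform subspace of the \emph{same} type; it could in principle be a restriction subspace of type $B_\ell/D_\ell$ or ${}^2A_\ell$ (Construction \ref{restrictionsubspacedef}) with the same intrinsic isometry type. So the crux is to pin down, using the Killing--Cartan type together with the rank, that a maximal isotropic quaternionic subform subspace of $M_1$ can only be matched in $M_2$ by a subspace forcing $\mathrm{rank}(M_2)\ge \mathrm{rank}(M_1)$. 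I expect to handle this exactly as in the proof of Lemma \ref{dim}: compare the dimensions of the simple factors of the relevant Lie subgroups and invoke the fact that a group of the form $\prod \mathbf{Sp}(p_i',\,n-1-p_i')$ cannot embed as a Lie subgroup into a product of groups of strictly smaller $\R$-rank. The inequality in both directions then yields equality of ranks.
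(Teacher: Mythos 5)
Your overall strategy---reduce to the nonsplit case via Lemma \ref{dim}, produce subform subspaces, and exploit monotonicity of $\R$-rank under totally geodesic inclusion---is indeed the paper's strategy. But your pivotal numerical claim, that $\mathrm{rank}(M_i)$ is recovered from the maximal $\ell$ for which $M_i$ contains a type-$C_\ell$ totally geodesic subspace of full Witt index, is false exactly when $h_i$ has balanced signature at the isotropic place. Take $n=4$ and compare $\mathbf{Sp}(2,2)$ (rank $2$) with $\mathbf{Sp}(3,1)$ (rank $1$): deleting a diagonal entry from a signature-$(2,2)$ Hermitian form necessarily drops the Witt index, so the proper type-$C$ subform subspaces of both spaces have universal covers drawn from exactly the same list---the symmetric spaces of $\mathbf{Sp}(2,1)$ and $\mathbf{Sp}(1,1)$---and your invariant takes the value $\ell_{\max}=3$ for both. (In general $\mathbf{Sp}(q,q)$ versus $\mathbf{Sp}(q+1,q-1)$ produces the same collision.) Consequently your closing step does not close: running the inequality in both directions yields only $\mathrm{rank}(M_1)\ge\mathrm{rank}(M_2)-1$ and $\mathrm{rank}(M_2)\ge\mathrm{rank}(M_1)-1$, not equality. (Separately, the rank should be $\lfloor \ell_{\max}/2\rfloor$ rather than $\ell_{\max}$ itself, but that normalization slip is cosmetic next to the balanced-signature failure.)

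The paper's proof is a one-sided contrapositive and asserts no such characterization: assuming $\mathrm{rank}(M_1)>\mathrm{rank}(M_2)$, it takes the $(n-1)$-dimensional subform $r$ of $h_1$ from the proof of Lemma \ref{dim}, deleting an entry on the larger side of the signature so that $\mathrm{rank}_\R\,\mathbf{SU}(r\otimes k_{1,v_1})\ge \mathrm{rank}_\R(\mathbf{G}_2\otimes k_{2,v_2})$---strictly greater whenever $p_1\neq q_1$---whence $N_r\notin \Q TG(M_2)$ by rank monotonicity. The one delicate configuration ($p_1=q_1$ with $\mathrm{rank}(M_2)=q_1-1$), over which the paper is admittedly terse, is precisely the configuration that breaks your argument, and it is repaired by the construction you dismissed as an obstacle: the maximal complex restriction subspace of $M_1$ (Construction \ref{restrictionsubspacedef}) has $\R$-group $\mathbf{SU}(p_1,q_1)$ at $v_1$, hence is a \emph{proper} totally geodesic subspace of full rank $\min\{p_1,q_1\}=\mathrm{rank}(M_1)$; since a maximal flat of a totally geodesic subspace is a flat of the ambient space, its membership in $\Q TG(M_2)$ forces $\mathrm{rank}(M_2)\ge\mathrm{rank}(M_1)$ in every case, including the balanced one. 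Replacing your full-Witt-index subform invariant with this restriction-subspace argument makes your two-sided inequality valid.
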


\begin{proof}
By Lemma \ref{dim}, $n_1=n_2$.  
Let $v_i\in V_{k_i}$ be the unique real place where $\mathbf{G}_i$ is isotropic, it suffices to show that the $\R$-rank of $\mathbf{G}_1\otimes k_{1,v_1}$ equals the $\R$-rank of $\mathbf{G}_2\otimes k_{2,v_2}$.  
We will prove the contrapositive.  Suppose $\mathrm{rank}_\R(\mathbf{G}_1\otimes k_{1,v_1})>\mathrm{rank}_\R(\mathbf{G}_2\otimes k_{2,v_2})$.  Recall that $\mathrm{rank}_\R(\mathbf{Sp}_{2n}(\R))=n$ and $\mathrm{rank}_\R(\mathbf{Sp}(p, n-p))=\min\{p, n-p\}$.
Let $r$ be the Hermitian subform as in the proof of Lemma \ref{dim}.  
We can choose $r$ to guarantee that $\mathrm{rank}_\R(\mathbf{SU}(r\otimes k_{v_1}))\ge \mathrm{rank}_\R(\mathbf{G}_2\otimes k_{2,v_2})$, and hence $N_r\notin \Q TG(M_2)$.
\end{proof}

%

%

\begin{proof}[Proof of Theorem \ref{mainthm:dimfield}]
(a) 
Begin by noting that $\overline{G}_i$ is isomorphic (as a Lie group) to the adjoint group of $G_i$.
Lemma \ref{dim} and Lemma \ref{rank} imply $\overline{G}_1$ and $\overline{G}_2$ have the same absolute rank $n$ and same $\R$-rank.
Simple algebraic groups of type $C_n$ are determined up to isogeny by their rank and $\R$-rank \cite[Lemma 2.2]{LinowitzMeyer}, and hence $\overline{G}_1$ and $\overline{G}_2$ are isomorphic.
From this it immediately follows that $\dim M_1=\dim M_2$.
%

(b)  Now when $n\ge 4$, 
$\dim \mathbf{G}_i=n(2n+1)<2(n-1)(2n-1)=2\dim \mathbf{H}.$
Since $M_1$ and $M_2$ are simple, by \cite[Proposition 7.4]{M14spec} the result follows.
\end{proof}

\begin{proof}[Proof of Theorem \ref{mainthm:commensurable}]
By Lemma \ref{dim} we may assume $n_1=n_2$ and that either they are both of split-type or both of nonsplit-type.
Let $k$ be denote a fixed representative of the isomorphism class of $k(M_1)$ and $k(M_2)$.
Suppose first that both $M_1$ and $M_2$ are of split-type.
By Remark \ref{rem:arithmeticsplitnlattice}, it follows that a choice of absolute rank and of field of definition determines their commensurability class and the result follows.

\qquad
Now suppose both spaces are of nonsplit-type.
Then there are $n$-dimensional Hermitian forms $h_i$ over quaternion algebras $D_i$ over $k$ giving rise to $M_i$.
Let $\mathbf{H}_1=\mathbf{SU}(r)$ be associated to a maximal subform subspace of $M_1$.
Observe that since $R_{k/\Q}(\mathbf{H}_1)(\Q_p)\cong \prod_{v|p} \mathbf{H}_1(k_v)$ for each rational prime $p$, $R_{k/\Q}(\mathbf{H}_1)$ detects the local ramification type of $D_1$ over each $p$.
By assumption and \cite[Theorem 3.5]{M14spec}, $R_{k/\Q}(\mathbf{H}_1)$ is $\Q$-isogenous to a $\Q$-subgroup of $R_{k/\Q}\mathbf{G}$,
and hence $D_2$ must have the same ramification behavior over each rational prime.
Therefore, this isogeny gives a field automorphism $\tau:k\to k$ sending the ramification set of $D_1$ to the ramification set of $D_2$.
Upon twisting by $\tau$, we may take $D_1$ and $D_2$ to be $k$-isomorphic algebras, and we will let $D$ denote a fixed representative of this isomorphism class.

\qquad 
Suppose now that $M_1$ and $M_2$ are noncommensuable.
Noncommensuable spaces with the same field and algebra of definition must come from nonisometric Hermitian forms, which in turn have nonisometric trace forms \cite[Theorem 10.1.7]{Sch}.  
Since  the trace form is uniquely determined at every finite place (see Lemma \ref{localforms} and \cite[Theorem 7.8.1]{Sch}),  every complex place, and every real place where $D$ splits, they must differ at a real place where $D$ ramifies.
Furthermore, by \cite[Lemma 8.2]{M14spec}, we may replace $h_i$ with a similar form such that the signature $(s_1,s_2)$ of $h_i$ at each real place where $D$ ramifies satisfies $s_1\ge s_2$.
Note that if $h_1$ and $h_2$ are isotropic at a different number of infinite places, we can detect this with a subform subspace that lies in one of the $M_i$'s but not in the other.
Suppose for the moment $n\ge 5$.
By the same signature arguments used in \cite[Theorem 8.8]{M14spec}, it follows that there is a maximal subform subspace of one whose signatures force it to not be a subspace of the other.
In the case $n=3$, since isotropic places must have signature $(2,1)$, to be noncommensurable, they must have a different number of isotropic place, which can be detected by a maximal subform subspace.
In the case $n=4$, either they have a different number of isotropic places, which can be detected by a subform subspace, or they must have a different number of places of signature $(2,2)$ and $(3,1)$ and hence we may detect this with a maximal complex restriction subspace.
In each of these cases, we have constructed a totally geodesic subspace contained in one but not the other, thereby producing a contradiction.
\end{proof}

\qquad The following corollary is an immediate consequence of Theorem \ref{mainthm:dimfield} and Theorem \ref{mainthm:commensurable}.

\begin{cor}\label{cor:bandc}
Let $M_1$ and $M_2$ be simple finite volume locally symmetric orbifolds of type $C_{n_1}$ and $C_{n_2}$, respectively, where $n_i\ge 4$.
If $\Q TG(M_1)=\Q TG(M_2)$, then $M_1\sim_cM_2$.
\end{cor}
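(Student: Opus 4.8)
The plan is to chain together the two preceding main theorems, since the corollary asserts nothing beyond what their composition yields. First I would invoke Theorem \ref{mainthm:dimfield}(a): because $M_1$ and $M_2$ are simple finite volume locally symmetric orbifolds of type $C_{n_1}$ and $C_{n_2}$ with $n_i\ge 4\ge 3$, and because $\Q TG(M_1)=\Q TG(M_2)$ by hypothesis, part (a) gives $n_1=n_2=:n$ (along with the Lie group isomorphism $\overline{G}_1\cong\overline{G}_2$). In particular the common absolute rank $n$ satisfies $n\ge 4$, which is precisely the numerical hypothesis needed to invoke the second half of that theorem.

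Next I would apply Theorem \ref{mainthm:dimfield}(b) to this common value $n\ge 4$, concluding that the fields of definition $k(M_1)$ and $k(M_2)$ are isomorphic. This is the step that consumes the strengthened hypothesis $n_i\ge 4$: part (a) already holds for $n_i\ge 3$, but recovering the field of definition from the rational totally geodesic spectrum relies on the dimension count $\dim\mathbf{G}_i=n(2n+1)<2(n-1)(2n-1)=2\dim\mathbf{H}$ appearing in the proof of (b), which fails when $n=3$.

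Finally, with the isomorphism $k(M_1)\cong k(M_2)$ now established and the equality $\Q TG(M_1)=\Q TG(M_2)$ still in force, both hypotheses of Theorem \ref{mainthm:commensurable} are satisfied (noting that $n_i=n\ge 4\ge 3$), so that theorem yields $M_1\sim_c M_2$, as desired. There is no genuine obstacle here: the substantive work resides entirely in Theorems \ref{mainthm:dimfield} and \ref{mainthm:commensurable}, and the only thing one must verify is that the conclusion of the former---an isomorphism of fields of definition---is exactly the extra input demanded by the latter, and that the numerical hypothesis $n_i\ge 4$ propagates correctly through both invocations.
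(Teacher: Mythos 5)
Your proposal is correct and matches the paper exactly: the paper states this corollary is an immediate consequence of Theorem \ref{mainthm:dimfield} and Theorem \ref{mainthm:commensurable}, obtained by chaining them precisely as you do. Your added observation about where the hypothesis $n_i\ge 4$ is consumed (the dimension count in part (b), which fails at $n=3$) is an accurate reading of the paper's proof of Theorem \ref{mainthm:dimfield}.
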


\section{Applications to Quaternionic Hyperbolic Spaces}

\qquad 
In this section we apply our techniques to answer some questions about quaternionic hyperbolic orbifolds.  
Throughout this section, let $M$ be a finite volume, $4m$-dimensional, quaternionic hyperbolic orbifold, $m=n-1\ge 2$, with commensurability class determined by the \hyperref[admissibledef]{admissible triple} $(k,v_0,D)$ (see Theorem \ref{thrmC}).
We fix once and for all the real place $v_0\in V_k$ over which the $n$-dimensional Hermitian form $h$ is isotropic.
We denote $k_{v_0}$ by $\R$ and $D_{k_{v_0}}$ by $\mathbb{H}$.
Let $\mathbf{G}=\mathbf{SU}(V,h)$,  $V_\R=V\otimes _k\R$, $h_\R$ denote the extension of $h$ to $V_\R$, and $\mathbf{G}(\R)=\mathbf{Sp}(m,1)$.
We let $\mathrm{Max}(D)$ denote the set of isomorphism classes of maximal subfields of $D$.

\qquad 
By Theorem \ref{totgeoclassification},  we know $M$ has three types of totally geodesic subspaces: real, complex, and quaternionic hyperbolic.  
We show in Proposition \ref{fieldofdefquathyp} that the finite volume totally geodesic subspaces of $M$ inherit the arithmetic structure of $M$.
In particular, the real hyperbolic are standard arithmetic and the complex hyperbolic are arithmetic of the first kind.
One can find the construction of standard arithmetic real hyperbolic orbifolds (which are sometimes referred to as arithmetic orbifolds of simplest type) in \cite[\S2]{Millson}
and the construction of arithmetic complex hyperbolic orbifolds of the first kind in \cite[Chp 5]{McRey2}.



\begin{prop}[Classification of Finite Volume Subspaces]\label{fieldofdefquathyp}
If $N\in \Q TG(M)$, then $N$ is arithmetic with field of definition $k(N)=k$ and $N$ can be realized by restricting $h$ to some $k$-subspace of $V$. In particular, if
\begin{enumerate}
\item $N$ is real hyperbolic, then $N$ is standard arithmetic.
\item $N$ is complex hyperbolic, then $N$ is arithmetic of the first kind relative to $K/k$ where $K\in \mathrm{Max}(D)$.
\item $N$ is quaternionic hyperbolic, then $D(N)=D(M)$.
\end{enumerate}
\end{prop}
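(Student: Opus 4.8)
The plan is to lift $N$ to a totally geodesic subspace $\widetilde{N}\subset\widetilde{M}=\mathbf{H}_{\mathbb{H}}^m$ and to show that its finiteness of volume forces it to be cut out over $k$. By Theorem \ref{totgeoclassification}, $\widetilde{N}$ corresponds to an $\R$-subspace $W\subset V_\R$ containing a negative vector, and $W$ is totally real, totally complex, or totally quaternionic according to whether $N$ is real, complex, or quaternionic hyperbolic. Let $H\subset G=\mathbf{Sp}(m,1)$ be the stabilizer of $\widetilde{N}$ and set $\Lambda:=\mathrm{Stab}_\Gamma(\widetilde{N})=\Gamma\cap H$, so that $N\cong\Lambda\backslash\widetilde{N}$. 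Because $N$ has finite volume, $\Lambda$ is a lattice in $H$, and its image in the noncompact factor $H_{\mathrm{nc}}$ of $H$ (the simple group whose symmetric space is $\widetilde{N}$) acts with finite covolume.

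The arithmetic heart of the argument is to promote $H$ to an algebraic group over $k$. Since $\Lambda\subset\Gamma\subset\mathbf{G}(k)=\mathbf{SU}(V,h)(k)$, the Zariski closure $\mathbf{H}:=\overline{\Lambda}^{\,\mathrm{Zar}}$ is automatically defined over $k$: it is the intersection of the $k$-subvarieties of $\mathbf{G}$ containing the $k$-points $\Lambda$, hence Galois stable, and $k$ is perfect. On the other hand, $\Lambda$ is a lattice in $H$, so by the Borel density theorem its Zariski closure contains the noncompact factor $H_{\mathrm{nc}}$ whose symmetric space is $\widetilde{N}$. Therefore $\mathbf{H}$ is a reductive $k$-subgroup of $\mathbf{SU}(V,h)$ whose symmetric space is $\widetilde{N}$, and $\Lambda$ is arithmetic relative to $\mathbf{H}$; in particular $N$ is an arithmetic locally symmetric orbifold defined over $k$. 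This is the same mechanism exploited in \cite{M14spec}.

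It remains to identify $\mathbf{H}$ with one of the restriction constructions of Section \ref{section:construction} and to read off the three cases, matching the Killing--Cartan type of $\mathbf{H}$ against the trichotomy for $W$. If $W$ is totally quaternionic, $\mathbf{H}$ is of type $C_l$ and, being a $k$-subgroup of $\mathbf{SU}(V,h)$, is the special unitary group $\mathbf{SU}(U,h|_U)$ of a $D$-subspace $U\subset V$ defined over $k$; this is a subform subspace (Construction \ref{subformsubspacedef}), whence $D(N)=D(M)=D$. If $W$ is totally complex, $\mathbf{H}$ is of outer type ${}^2A_l$ and is $\mathbf{SU}(W_\delta,h_\delta)$ relative to a maximal subfield $K=k(\delta)\in\mathrm{Max}(D)$, the $K/k$-structure coming from a $k$-rational pure quaternion $\delta\in D$; this is a complex restriction subspace (Construction \ref{restrictionsubspacedef}), so $N$ is arithmetic of the first kind relative to $K/k$. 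If $W$ is totally real, $\mathbf{H}$ is of type $B_l$ or $D_l$ and is $\mathbf{SO}(U,q)$ for a quadratic $k$-space $(U,q)$ obtained by restricting $h$; this is a real restriction subspace, so $N$ is standard arithmetic. In every case $N$ is obtained by restricting $h$ to a $k$-subspace of $V$, and since the subform and restriction constructions have the same field of definition as the ambient space, we conclude $k(N)=k$.

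The step I expect to be the main obstacle is this last reconstruction: passing from the abstract fact that $\mathbf{H}$ is a $k$-subgroup of $\mathbf{SU}(V,h)$ of a prescribed type to an honest $k$-subspace $U$ (resp.\ $W_\delta$) whose real span recovers $W$ and whose restricted form produces $\mathbf{H}$. This requires the classification of semisimple $k$-subgroups of a special unitary group over a quaternion division algebra \cite{TitsClassification} together with the theory of Hermitian and quadratic subforms \cite{Sch}, and it must be made compatible with the $\R$-structure on $W$ supplied by Theorem \ref{totgeoclassification}. The most delicate point is the totally complex case, where one must verify that the complexifying pure quaternion $\delta$ can be taken $k$-rational, so that $k(\delta)$ is genuinely a maximal subfield of $D$ and $W_\delta=W\oplus W\delta$ descends to a $k$-subspace of $V$.
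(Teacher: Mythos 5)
Your outline follows the same route as the paper: both arguments pass through Theorem \ref{totgeoclassification} to attach to $N$ a totally real/complex/quaternionic subspace $W'\subset V_\R$, both identify the relevant subgroup as $\mathbf{SU}(W',r')$ with $r'=h_\R|_{W'}$, and both get $k$-definedness of $\mathbf{H}$ by combining Borel density (the lattice $\Lambda$ is Zariski-dense in $\mathbf{H}$) with the fact that the Zariski closure of a subgroup of $\mathbf{G}(k)$ is defined over $k$ \cite[AG.14.4]{B1}. (The paper actually quotes \cite[Theorem 3.3]{M14spec} for arithmeticity of $N$ up front, but your self-contained derivation of that point is fine.) The problem is that your argument stops exactly where the content is: you explicitly defer the descent step --- producing an honest $k$-subspace whose restricted form recovers $\mathbf{H}$, and in the complex case the $k$-rationality of the quadratic subfield --- to an unexecuted appeal to the Tits classification of $k$-subgroups. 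That appeal is both heavier than needed and not obviously workable as stated, since the classification of abstract $k$-forms does not by itself tell you that the $k$-group you obtain is realized by restricting $h$ to a $k$-subspace compatible with the given $W'$. As written, the proposal is an outline with its crux unproven.

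The paper closes this gap with two short, concrete observations that you should note. First, once $\mathbf{H}=\mathbf{SU}(W',r')$ is known to be defined over $k$, the subspace $W'$ (being canonically attached to $\mathbf{H}$) descends: $W:=W'\cap V$ is a $k$-structure on $W'$, and the commensurability class of $N$ is determined by $\mathbf{SU}(W,r)$ where $r=h|_W$. No classification theorem is needed --- the $k$-subspace is simply the intersection with the given $k$-structure on $V$. Second, in the totally complex case one does \emph{not} prove that the complexifying pure quaternion $\delta$ is $k$-rational, which is the ``delicate point'' you flag; instead, since $W'$ totally complex means $h_\R(W',W')\subset \R(\delta)$, the values of $h$ on the $k$-space $W$ lie in $D\cap\R(\delta)$, so $K:=k[h(W,W)]\subset D\cap\R(\delta)$ is a commutative subfield of $D$, hence an imaginary quadratic extension of $k$, i.e.\ a maximal subfield $K\in\mathrm{Max}(D)$, and $(W,r)$ is a Hermitian space over $K/k$. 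This one-line construction dissolves the rationality issue entirely. The totally real case ($(W,r)$ a quadratic $k$-space, so $N$ standard arithmetic) and the totally quaternionic case ($N$ a subform subspace, so $D(N)=D(M)$) then read off immediately, as in your case analysis.
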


\begin{proof}
By  \cite[Theorem 3.3]{M14spec}, $N$ is arithmetic.  
Let $\mathbb{F}$ denote $\R$, $\C$, or $\mathbb{H}$ according the whether $N$ is real, complex, or quaternionic.
Let $H\subset G:=\mathbf{G}(\R)$ be the connected semisimple Lie subgroup giving rise to $N$.  
Since $N$ is $\mathbb{F}$-hyperbolic, it follows that $H=\mathbf{H}(\R)^\circ$ where $\mathbf{H}=\mathbf{SU}(W',r')$ for some totally $\mathbb{F}$-subspace $W'\subset V_\R$ and $r'$ the restriction of $h_\R$ to $W'$. 
Let $L\subset V$ be an $\mathcal{O}_D$-lattice and let $G_{L}$ be its stabilizer in $G$.  
Since $\Lambda:=G_{L}\cap H$ is a lattice in $H$, by Borel's density theorem \cite{B60}, $\Lambda$ is Zariski-dense in $\mathbf{H}$, and hence $\mathbf{H}$ is defined over $k$ \cite[AG.14.4]{B1}.
From this it follows that $k(N)=k$.

\qquad 
Furthermore, $W:=W'\cap V$ is a $k$-structure on $W'$ and letting $r$ denote the restriction of $h$ to $W$,
we have that the commensurability class of $N$ is determined by the $k$-group $\mathbf{SU}(W,r)$.  
In the case that $W'$ is totally real, $(W',r')$ is a quadratic space over $\R$, and hence $(W,r)$ is a quadratic space over $k$ and (1) follows.
In the case that $W'$ is totally complex, let $\delta\in \mathbb{H}$ denote the pure quaternion for which $W'=W'\delta$.
It follows that the $k$-algebra $K:=k[h(W,W)]\subset (D\cap \R(\delta))$ is a maximal subfield of $D$ and hence is an imaginary quadratic extension of $k$.
It follows that $(W,r)$ is a Hermitian space over the pair $K/k$ and (2) follows.
In the case that $W$ is totally quaternionic, the result follows from the fact $N$ is a subform subspace.
\end{proof}

\qquad
We can now analyze each of these classes of totally geodesic subspaces on their own. 
In particular, it is natural to the ask the following question:
\begin{ques}
To what extent does the collection of finite volume, real (resp. complex) hyperbolic, totally geodesic subspaces of $M$ determine the commensurability class of $M$?
\end{ques}

\qquad 
Interestingly we shall show that the answer is a geometric version of the following fact about quaternion division algebras over a number field $k$: 
there are many quaternion division algebras with center $k$, but a quaternion division algebra is uniquely determined by its collection of maximal subfields \cite[Remark 5.4]{PrasadRapinchukWC}.  
Here we should think of the center of the division algebra as the real hyperbolic subspaces, and the maximal subfields as the complex hyperbolic subspaces.


\begin{prop}\label{parametrizecomplexhyp}
Let $N$ be an arithmetic $2m$-dimensional complex hyperbolic orbifold of the first kind associated to the pair $K/k$, where $K\in \mathrm{Max}(D)$.  
Then $N$ is commensurable to a totally geodesic subspace of $M$.
This subspace is realized by restricting the Hermitian form $h$ on $V$ to a $2n$-dimensional totally complex subspace.
\end{prop}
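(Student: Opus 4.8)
The plan is to realize the Hermitian form over $K/k$ attached to $N$ as the restriction of $h$ to a totally complex $K$-subspace of $V$, so that $N$ appears as a maximal complex restriction subspace in the sense of Construction~\ref{restrictionsubspacedef}. First I would use the classification underlying Proposition~\ref{fieldofdefquathyp}(2) to write the commensurability class of $N$ as $N_s$ for some $n$-dimensional Hermitian form $s$ over $K/k$, with $n=m+1$. Since $N$ is a finite volume complex hyperbolic orbifold with noncompact place $v_0$, and every real place of $k$ is inert in the CM field $K$ (here $K\in\mathrm{Max}(D)$ and $D$ ramifies at every real place), the form $s$ has signature $(m,1)$ at $v_0$ and is definite of signature $(n,0)$ at every other real place. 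After diagonalizing I may write $s=\langle c_1,\dots,c_n\rangle$ with each $c_i\in k^{\times}$.

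The core of the argument is to extend scalars and compare with $h$. I would set $\hat s:=\langle c_1,\dots,c_n\rangle$, now viewed as an $n$-dimensional Hermitian form over $D$ (this is legitimate because the $c_i$ lie in $k$), and show that $\hat s\cong h$ over $D$. By \cite[Theorem~10.1.7]{Sch} this is equivalent to an isometry of trace forms $q_{\hat s}\cong q_h$ over $k$, which by the Hasse--Minkowski theorem \cite{OM} may be verified place by place. At every nonarchimedean place and every real place where $D$ splits, Proposition~\ref{uniquelocalHermitian} shows that the rank-$n$ Hermitian form over the local division algebra is unique, so $\hat s$ and $h$ agree there automatically; indeed Lemma~\ref{localforms} records that the local invariants $\dim$, $\det$, and $c$ of the trace form depend only on $n$ and $D$. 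The only places left to examine are the real places where $D$ ramifies, at which $D_v\cong\mathbb{H}$ and a Hermitian form is classified by its signature. Here the key point is that extending $s$ to $\hat s$ preserves signatures: the diagonal form $\langle c_1,\dots,c_n\rangle$ has the same number of positive and negative entries whether read over $\C=K_v$ or over $\mathbb{H}=D_v$. As $s$ and $h$ carry the same signature data, $\hat s$ and $h$ therefore match at every ramified real place, giving $\hat s\cong h$.

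Granting the isometry $\hat s\cong h$, I would conclude by identifying the underlying $K$-space $W$ of $s$, via $w\mapsto w\otimes 1\in W\otimes_K D\cong V$, with a totally complex $k$-subspace of $V$ of $k$-dimension $2n$, stable under a pure quaternion $\delta$ generating $K=k(\delta)$, on which $h$ restricts to $s$. Feeding this subspace into Construction~\ref{restrictionsubspacedef} produces a finite volume, totally geodesic, complex hyperbolic subspace of $M$ whose commensurability class is exactly $N_s=N$, and whose defining subspace is the asserted $2n$-dimensional totally complex one.

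I expect the main obstacle to be the isometry $\hat s\cong h$ in the second step. A priori the discriminant of $s$ --- which is exactly what distinguishes noncommensurable complex hyperbolic orbifolds over a fixed $K/k$ --- could obstruct the embedding. What rescues the argument is that this discriminant is forgotten upon passage to $D$: by Lemma~\ref{localforms} the trace form $q_{\hat s}$ always has trivial determinant and a Hasse invariant depending only on $n$ and $D$, so signature is the sole surviving obstruction, and it matches by the complex hyperbolic hypothesis. This is the geometric shadow of the fact that a quaternion division algebra does not see the discriminants of its maximal subfields, as anticipated in the discussion following Proposition~\ref{fieldofdefquathyp}.
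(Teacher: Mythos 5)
Your proposal is correct and follows essentially the same route as the paper's proof: both extend the $K/k$-Hermitian form of $N$ to a $D$-Hermitian form (you via a diagonalization $\langle c_1,\dots,c_n\rangle$ with entries in $k$, the paper via the coordinate-free formula on $W^*\oplus W^*\mu$), match it with $h$ by combining Lemma \ref{localforms} at the finite places with signature comparison at the ramified real places, pass from local to global via \cite[Theorem 66:4]{OM} and lift to an isometry of Hermitian spaces by \cite[Theorem 10.1.7]{Sch}, and finally recover the form of $N$ by restricting to the totally complex subspace. Your closing remark that the discriminant of $s$ is forgotten upon extension to $D$ (trivial determinant of the trace form) is precisely the mechanism implicitly at work in the paper's proof as well.
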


\begin{proof}
Associated to $N$ is an $n$-dimensional Hermitian space, $(W^*,h^*)$, over $K/k$ with signature $(m,1)$ at the real place where $h$ is isotropic and anisotropic at all other real places.
By assumption, there exists a pure quaternion $\delta \in D$ such that $K$ is isomorphic to $k(\delta)$.
Let $\mu$ be a pure quaternion compliment to $\delta$ (i.e., $D$ is generated as a $k$-algebra by $\delta$ and $\mu$ and $\delta\mu=-\mu\delta$).
In particular, $D=K\oplus K\mu$ and we can naturally extend $W^*$ to a $D$-vector space $W':=W^*\oplus W^*\mu$.
Furthermore, we can extend $(W^*,h^*)$ to a Hermitian space $(W',h')$ over $D$ via the following:
For $a+b\mu,c+d\mu\in D$, $a,b,c,d\in k(\delta)$, $$h'(a+b\mu, c+d\mu):= (h^*(a,c)-h^*(b,d)\mu^2) + (h^*(a,d)-\overline{h^*(b,c)})\mu.$$
By construction, $h$ and $h'$ are locally isometric at the infinite places of $k$.
It follows from Lemma \ref{localforms} that the trace forms $q_h$ and $q_{h'}$ are locally isometric at all finite places of $k$.
By local-to-global uniqueness \cite[Theorem 66:4]{OM}, the trace forms are isometric over $k$, and hence by \cite[Theorem 10.1.7]{Sch}, the Hermitian spaces $(V,h)$ and $(W',h')$ are isometric over $k$.
Furthermore, we recover $h^*$ by restricting $h'$ to the totally complex subspace $W^*\subset W'$, and the result follows.
\end{proof}

\qquad Recall that $\Q TG(M, \mathcal{P}_{\mathbb{C}})$ (resp. $\Q TG(M, \mathcal{P}_{\mathbb{H}})$) is the collection of commensurability classes of maximal, finite volume, nonflat, complex (resp. quaternionic) hyperbolic, totally geodesic subspace of $M$.

\begin{proof}[Proof of Theorem \ref{mainthm:complex}]
First suppose  $\Q TG(M_1, \mathcal{P}_{\mathbb{C}})=\Q TG(M_2, \mathcal{P}_{\mathbb{C}})$.
It is clear that $\dim M_1=\dim M_2$, and by Proposition \ref{fieldofdefquathyp},
$k(M_1)$ and $k(M_2)$ are isomorphic.  
Let $k$ be a fixed representative of this class, $v_0$ be its distinguished real embedding, and identify $D(M_1)$ and $D(M_2)$ as algebras over $k$.
By Proposition \ref{parametrizecomplexhyp}, it follows that $\mathrm{Max}(D(M_1))=\mathrm{Max}(D(M_2))$. 
Since the set of maximal subfields determines the isomorphism class of the algebra \cite[Remark 5.4]{PrasadRapinchukWC}\cite{M14div}, we conclude that $D(M_1)$ and $D(M_2)$ are $k$-isomorphic. 
It follows that $(k,v_0,D(M_1))$ and $(k,v_0,D(M_2))$ are equivalent admissible triples.
By Theorem \ref{thrmC},  $M_1$ and $M_2$ are commensurable.

\qquad 
Now suppose $\Q TG(M_1, \mathcal{P}_{\mathbb{H}})=\Q TG(M_2, \mathcal{P}_{\mathbb{H}})$.
Again it is clear that $\dim M_1=\dim M_2$.   
By Proposition \ref{fieldofdefquathyp}, $k(M_1)$ and $k(M_2)$ are isomorphic.
Let $k$ be a fixed representative of this class, $v_0$ be its distinguished real embedding.
By Proposition \ref{fieldofdefquathyp}\textit{(3)},  $D(M_1)$ and $D(M_2)$ are isomorphic.
It follows that $(k,v_0,D(M_1))$ and $(k,v_0,D(M_2))$ are equivalent admissible triples, 
and again by Theorem \ref{thrmC},  $M_1$ and $M_2$ are commensurable.
\end{proof}

\begin{prop}\label{parametrizerealhyp}
Let $N$ be a standard arithmetic $m$-dimensional real hyperbolic orbifold with field of definition $k$.  
Then $N$ is commensurable to a totally geodesic subspace of $M$.
This subspace is realized by restricting the Hermitian form on $V$ to an $n$-dimensional totally real subspace.
\end{prop}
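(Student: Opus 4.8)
The plan is to run the argument of Proposition~\ref{parametrizecomplexhyp} with a quadratic form over $k$ in place of the Hermitian form over $K/k$. A standard arithmetic $m$-dimensional real hyperbolic orbifold $N$ with field of definition $k$ is cut out, up to commensurability, by an $n$-dimensional quadratic space $(W^*,q^*)$ over the totally real field $k$ with signature $(m,1)$ at $v_0$ and definite at every other real place; its commensurability class depends only on the similarity class of $q^*$ through the group $\mathbf{SO}(W^*,q^*)$ (see \cite[\S2]{Millson}). First I would record this quadratic datum.

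Next I would normalize the signatures. Because $\mathbf{SO}(q^*)=\mathbf{SO}(\lambda q^*)$ for every $\lambda\in k^\times$, replacing $q^*$ by a similar form does not change the commensurability class of $N$. Since $k$ is totally real, weak approximation lets me choose $\lambda$ with any prescribed sign pattern at the real places, so I may arrange that $q^*$ has signature $(m,1)$ at $v_0$ and is positive definite at every other real place. This is precisely the signature pattern of the restriction form $q$ of $h$: by the admissibility of $(k,v_0,D)$, the form $h$ has signature $(m,1)$ at $v_0$ and is anisotropic (positive definite) at all other real places, and its restriction form $q$ shares these signatures.

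I would then regard the diagonal entries of $q^*=\langle c_1,\dots,c_n\rangle$ as a Hermitian diagonal and set $h':=\langle c_1,\dots,c_n\rangle$, an $n$-dimensional Hermitian form over $D$ whose restriction to the $k$-span $W^*$ of the standard basis recovers $q^*$. By \cite[Theorem 10.1.7]{Sch} it suffices to prove $q_{h'}\cong q_h$ over $k$. At every finite place the two trace forms agree by the local uniqueness of $n$-dimensional Hermitian forms over $D$ (Lemma~\ref{localforms}, Proposition~\ref{uniquelocalHermitian}, and \cite[Theorem 7.8.1]{Sch}). At every real place $D$ ramifies, so $\varphi_D$ is positive definite there; hence $q_{h'}=\varphi_D\otimes q^*$ and $q_h=\varphi_D\otimes q$ have equal signatures, since $q^*$ and $q$ do. Thus $q_{h'}$ and $q_h$ are locally isometric everywhere, so by Hasse--Minkowski \cite[Theorem 66:4]{OM} they are isometric over $k$, giving $h'\cong h$ over $D$. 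Transporting $W^*$ through this isometry produces an $n$-dimensional totally real subspace $W\subset V$ with $h|_W$ similar to $q^*$, whence the associated totally real restriction subspace of $M$ is commensurable to $N$.

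The main obstacle is the normalization step, namely forcing the definite signatures at the real places $v\neq v_0$ to agree in sign with those of $q$, not merely to be definite; this is exactly where the totally real hypothesis and weak approximation on signs enter, and it is what distinguishes the real case from the complex case, where the signature of the defining Hermitian form over $K/k$ is already rigid. Once the signatures are matched, the remainder is a direct transcription of Proposition~\ref{parametrizecomplexhyp}.
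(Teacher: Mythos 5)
Your proposal is correct and follows essentially the same route as the paper's proof: extend the defining quadratic form $q^*$ to the Hermitian form $h'=q^*\otimes_k D$ on $W^*\otimes_k D$, match the trace forms $q_{h'}$ and $q_h$ locally at all places, and conclude $h'\cong h$ via \cite[Theorem 66:4]{OM} and \cite[Theorem 10.1.7]{Sch}, recovering $q^*$ on the totally real subspace $W^*$. The only difference is that you make explicit the signature normalization via similarity and weak approximation, which the paper absorbs into its opening assertion that the quadratic space associated to $N$ has signature $(m,1)$ at $v_0$ and $(m+1,0)$ elsewhere.
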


\begin{proof}
Associated to $N$ is an $n$-dimensional quadratic space $(W^*,q)$ over $k$ with signature $(m,1)$ at $v_0$ and has signature $(m+1,0)$ at all other real places.
Let $W'=W^*\otimes_k D$ and $h':=q\otimes_k D$  be the Hermitian form over $D$ extending $q$ to $W'$.  
By Lemma \ref{localforms} the trace forms $q_h$ and $q_{h'}$ are locally isometric.
As in the proof of Proposition \ref{parametrizecomplexhyp}, \cite[VI.66:4]{OM}, and \cite[Theorem 10.1.7]{Sch} imply the Hermitian spaces $(V,h)$ and $(W',h')$ are isometric over $k$.
Furthermore, we recover $q$ by restricting $h'$ to the totally real subspace $W^*\subset W'$, and the result follows.
\end{proof}

\begin{cor}\label{parametrizerealhyp2}
Let $S$ be a standard arithmetic, $d$-dimensional, real hyperbolic orbifold with field of definition $k$, $2\le d\le m$.  
Then $S$ is commensurable to a totally geodesic subspace of $M$.  
This subspace is realized by restricting the Hermitian form on $V$ to an $d$-dimensional totally real subspace.
\end{cor}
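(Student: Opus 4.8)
The plan is to deduce the corollary from Proposition \ref{parametrizerealhyp} by padding the quadratic form of $S$ up to the top dimension $n$ and then cutting back down. Since $S$ is standard arithmetic of dimension $d$ with field of definition $k$, I would first record that it is the orbifold attached to a $(d+1)$-dimensional quadratic space $(U,p)$ over $k$ whose signature is $(d,1)$ at the distinguished real place $v_0$ and $(d+1,0)$ at every other real place (see \cite[\S2]{Millson}); here $k$ is totally real, being the field of definition of the admissible triple defining $M$.

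Next I would enlarge $(U,p)$ to maximal dimension. Because $k$ is totally real, I can choose $m-d$ totally positive scalars $c_1,\dots,c_{m-d}\in k^\times$ (for instance $c_i=1$, which is legitimate precisely because $d\le m$), and set
$$(W^*,q):=(U,p)\perp\langle c_1,\dots,c_{m-d}\rangle.$$
Since each $c_i$ is positive at every real place, $(W^*,q)$ is an $n$-dimensional quadratic space over $k$ (recall $n=m+1$) with signature $(m,1)$ at $v_0$ and $(m+1,0)$ at all other real places. Thus $(W^*,q)$ is exactly the data of a maximal standard arithmetic $m$-dimensional real hyperbolic orbifold with field of definition $k$, so Proposition \ref{parametrizerealhyp} applies and furnishes a $k$-isometry $(V,h)\cong (W^*\otimes_k D,\,q\otimes_k D)$ under which the totally real $k$-subspace $W^*$ realizes that maximal orbifold as a totally geodesic subspace of $M$.

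Finally I would transport $U$ through this isometry. As $U\subset W^*$ is an orthogonal summand, its image is a totally real $k$-subspace of $V$ of dimension $d+1$ on which $h$ restricts to a form isometric to $p$. By Theorem \ref{totgeoclassification} this totally real subspace cuts out a real hyperbolic totally geodesic subspace of $M$, and the arithmetic orbifold it carries is built from the very same datum $(U,p)$ over $k$ as $S$; hence the two are commensurable. This gives the corollary, with the realizing subspace obtained by restricting $h$ to the totally real $k$-subspace corresponding to $(U,p)$.

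The argument is light once Proposition \ref{parametrizerealhyp} is available, and I do not expect a genuine obstacle. The only points needing a word of care are (i) the existence of the totally positive padding scalars, which holds automatically over the totally real field $k$ and does not alter the field of definition, and (ii) the passage from \emph{isometric quadratic data} to \emph{commensurable orbifold}, which follows because standard arithmetic real hyperbolic orbifolds attached to the same quadratic space over $k$ lie in a single commensurability class. Notably, padding up to full dimension before restricting lets me invoke the ready-made global isometry of Proposition \ref{parametrizerealhyp} and thereby sidestep any delicate local--global question about representing $p$ directly as a Hermitian subform of $h$.
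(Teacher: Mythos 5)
Your proposal is correct and is essentially the paper's own argument: the paper likewise chooses a standard arithmetic $m$-dimensional real hyperbolic orbifold $N$ with field of definition $k$ containing $S$ as a totally geodesic subspace and then invokes Proposition \ref{parametrizerealhyp}; your padding $(U,p)\perp\langle c_1,\dots,c_{m-d}\rangle$ simply makes explicit the construction of such an $N$, which the paper leaves implicit. The only discrepancy is cosmetic: your realizing subspace is $(d+1)$-dimensional, consistent with the convention of Proposition \ref{parametrizerealhyp} (an $n$-dimensional subspace for an $m$-dimensional orbifold, $n=m+1$), so the ``$d$-dimensional'' in the corollary's statement appears to be a typo that your count corrects.
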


\begin{proof}
Let $N$ be a standard arithmetic, $m$-dimensional, real hyperbolic orbifold with field of definition $k$ containing $S$ as a totally geodesic subspace.
By Proposition \ref{parametrizerealhyp}, $N$ is commensurable to a totally geodesic subspace of $M$ and the result follows.
\end{proof}

%

\qquad 
Proposition \ref{parametrizerealhyp} says that $\Q TG(M, \mathcal{P}_{\mathbb{R}})$ cannot distinguish between commensurability classes of quaternionic hyperbolic orbifolds with a fixed field of definition $k$.
In the following corollary, we give a sense of the extent to which $\Q TG(M, \mathcal{P}_{\mathbb{R}})$ fails to determine a commensurability class.

\begin{cor}\label{cor:oneintoall}
For each $m\ge 2$ and $s\ge 2$, there exists a family $\{M_1,M_2, \ldots, M_s\}$ of $4m$-dimensional quaternionic hyperbolic orbifolds such that the members are pairwise noncommensurable and every finite volume, real hyperbolic, totally geodesic subspace in one is commensurable to a totally geodesic subspace in each of the others.
Furthermore, this collection can be chosen so that each member has the same field of definition.
\end{cor}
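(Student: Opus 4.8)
The plan is to exploit the fact, established in Proposition \ref{parametrizerealhyp} and Corollary \ref{parametrizerealhyp2} together with Proposition \ref{fieldofdefquathyp}, that the commensurability classes of real hyperbolic totally geodesic subspaces of a quaternionic hyperbolic orbifold $M$ with \hyperref[admissibledef]{admissible triple} $(k,v_0,D)$ depend only on the field of definition $k$, and not on the algebra of definition $D$. Thus it suffices to produce, for a single fixed totally real field $k$, a family of $s$ \emph{inequivalent} admissible triples $(k,v_0,D_1),\dots,(k,v_0,D_s)$ sharing the field $k$; the corresponding orbifolds will be pairwise noncommensurable by Theorem \ref{thrmC} yet share their real hyperbolic spectra.

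First I would fix $m\ge2$ and, for concreteness, take $k=\Q$ with $v_0$ its unique real place, so that $\mathrm{Aut}(k)$ is trivial and two admissible triples over $k$ are equivalent (in the sense of Definition \ref{admissibledef}) exactly when their quaternion algebras are $k$-isomorphic. By the Albert--Brauer--Hasse--Noether classification, a quaternion division algebra over $k$ is determined by its finite, even-cardinality set of ramified places, and there are infinitely many such sets containing $v_0$; concretely, for distinct rational primes $p_1,\dots,p_s$ the algebras $D_i$ ramified exactly at $\{v_0,p_i\}$ are pairwise non-isomorphic and each ramifies at every real place of $k$. Hence each $(k,v_0,D_i)$ is an admissible triple, and the triples are pairwise inequivalent.

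Applying the surjection $\mathcal{C}_m$ and Theorem \ref{thrmC}, each triple determines a commensurability class $M_i$ of $4m$-dimensional quaternionic hyperbolic orbifolds with field of definition $k$ and algebra of definition $D_i$; since the triples are pairwise inequivalent, Theorem \ref{thrmC} forces the $M_i$ to be pairwise noncommensurable. It then remains to compare their real hyperbolic totally geodesic subspaces. By Proposition \ref{fieldofdefquathyp}(1), every finite volume real hyperbolic totally geodesic subspace of any $M_i$ is a standard arithmetic real hyperbolic orbifold of some dimension $d$ with $2\le d\le m$ and field of definition $k$; conversely, by Corollary \ref{parametrizerealhyp2}, every standard arithmetic $d$-dimensional real hyperbolic orbifold with field of definition $k$ and $2\le d\le m$ is commensurable to a totally geodesic subspace of each $M_i$. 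Consequently the set of commensurability classes of finite volume real hyperbolic totally geodesic subspaces of $M_i$ coincides with the set of commensurability classes of standard arithmetic real hyperbolic orbifolds of dimension $2\le d\le m$ with field of definition $k$, a set manifestly independent of $i$. This yields the required family and, restricting to maximal subspaces, also recovers Theorem \ref{mainthm:real}.

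The main obstacle is conceptual rather than computational: the entire force of the statement lies in the insensitivity of $\Q TG(M,\mathcal{P}_{\mathbb{R}})$ to $D$, which is already isolated in Proposition \ref{parametrizerealhyp} and Corollary \ref{parametrizerealhyp2}. Once that is in hand, the only remaining point is the existence of enough pairwise inequivalent admissible triples over a common field, which reduces to the standard fact that there are infinitely many quaternion division algebras over a fixed number field ramifying at every real place, obtained by varying the finite ramification so as to meet the parity constraint of the Albert--Brauer--Hasse--Noether theorem. Working over $k=\Q$ sidesteps the bookkeeping of the $\mathrm{Aut}(k)$-action on ramification sets that would otherwise be needed to certify inequivalence of the triples for a general totally real field.
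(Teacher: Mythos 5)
Your proof is correct and follows essentially the same route as the paper: choose pairwise inequivalent admissible triples $(k,v_0,D_i)$ over a common totally real field, invoke Theorem \ref{thrmC} for pairwise noncommensurability, and combine Proposition \ref{fieldofdefquathyp} with Corollary \ref{parametrizerealhyp2} to identify each $\Q TG(M_i,\mathcal{P}_{\mathbb{R}})$ with the $D$-independent set of standard arithmetic real hyperbolic classes over $k$ in dimensions $2\le d\le m$. Your only deviation is specializing to $k=\Q$ with an explicit Albert--Brauer--Hasse--Noether construction of the $D_i$, which in fact tidies up a point the paper leaves implicit, namely that nonisomorphic algebras could a priori yield equivalent triples under a nontrivial automorphism of a general field $k$.
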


\begin{proof}
Fix some totally real number field $k$, a distinguished real place $v_0$, and pick $s$ quaternion division algebras $D_1, D_2,\ldots D_s\in \mathrm{Br}(k)$ that are pairwise nonisomorphic
 and which ramify at all real places of $k$.  
Let $M_i$ be an $4m$-orbifold in the commensurability class determined by the triple $(k,v_0,D_i)$.
Since the collection of admissible triples $(k,v_0,D_i)$ are pairwise inequivalent, $\{M_1,M_2, \ldots, M_s\}$ are pairwise noncommensurable (Theorem \ref{thrmC}).
By Proposition \ref{fieldofdefquathyp}, the real hyperbolic, totally geodesic subspaces of the $M_i$ are standard arithmetic with field of definition $k$.
Let $S$ be a standard arithmetic, $d$-dimensional, real hyperbolic orbifold with field of definition $k$.
By Corollary \ref{parametrizerealhyp2}, $S$ is commensurable to a totally geodesic subspace of $M_i$ for all $1\le i\le s$.
\end{proof}

The proof of Theorem \ref{mainthm:real} immediately follows from this corollary.

%
%
%
%


\section{Surface Subgroups and Theorem \ref{mainthm:surfacesubgroup}}

\qquad 
We now use our results on real hyperbolic totally geodesic subspaces to construct (infinitely many commensurability classes of) quasiconvex surface subgroups within the fundamental group of a quaternionic hyperbolic orbifold, thereby proving Theorem \ref{mainthm:surfacesubgroup}.
In this section, $M$ will denote a $4m$-dimensional compact quaternionic hyperbolic orbifold, $m\ge 2$, whose commensurability class is given by the admissible triple $(k,v_0,D)$.



\begin{lem}\label{compactcondition}
Let $M$ be a simple arithmetic locally symmetric space of noncompact type.  If $k(M)\ne \Q$,  then $M$ is compact.
\end{lem}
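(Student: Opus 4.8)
The plan is to invoke Godement's compactness criterion and play the hypothesis $k \neq \Q$ against the simplicity of $M$. Write $k = k(M)$, let $\mathbf{G}$ be the absolutely almost simple $k$-group defining the commensurability class of $M$, and let $\mathbf{G}' = R_{k/\Q}\mathbf{G}$ be the $\Q$-group obtained by restriction of scalars, so that the lattice $\Gamma$ defining $M$ is commensurable to an arithmetic subgroup of $\mathbf{G}'(\Q)$. Godement's criterion states that the arithmetic quotient is compact if and only if $\mathbf{G}'$ is $\Q$-anisotropic; since the $\Q$-rank of $R_{k/\Q}\mathbf{G}$ equals the $k$-rank of $\mathbf{G}$, this is equivalent to $\mathbf{G}$ being $k$-anisotropic. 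Thus it suffices to show that $\mathbf{G}$ is anisotropic over $k$.

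First I would record what simplicity of $M$ imposes. Since $M$ is of noncompact type and $\mathrm{Isom}(\widetilde{M})$ is a simple Lie group, the real Lie group $G$, obtained from $\mathbf{G}'(\R) \cong \prod_{v \in V_k^\infty} \mathbf{G}(k_v)$ by projecting off its compact factors, has exactly one noncompact simple factor. This means $\mathbf{G}$ is isotropic at the distinguished real place $v_0$ and $k_v$-anisotropic (so that $\mathbf{G}(k_v)$ is compact) at every other archimedean place of $k$.

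Now I would use $k \neq \Q$. Since $v_0$ is a real place in this setting and $[k:\Q] \geq 2$, the field $k$ possesses at least one archimedean place $v \neq v_0$, and at such a place $\mathbf{G}$ is $k_v$-anisotropic by the previous paragraph. If $\mathbf{G}$ were $k$-isotropic, it would contain a nontrivial $k$-split torus, which stays split after extension of scalars to any completion $k_v$; this would force $\mathbf{G}$ to be $k_v$-isotropic at $v$, contradicting the anisotropy there. Therefore $\mathbf{G}$ is $k$-anisotropic, and Godement's criterion yields that $M$ is compact.

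The step I expect to require the most care is the passage from ``$M$ simple and $k \neq \Q$'' to ``there exists a second archimedean place at which $\mathbf{G}$ is anisotropic,'' since this relies on $v_0$ being a \emph{real} place. This is exactly what holds for quaternionic hyperbolic orbifolds and all of their finite-volume totally geodesic subspaces, whose fields of definition are the totally real field $k$ (see Proposition \ref{fieldofdefquathyp}); the argument would break down only in the degenerate case of an imaginary quadratic field of definition, where $v_0$ is the unique archimedean place, and that situation does not occur in our context. The remaining ingredients --- Godement's criterion and the equality of the $\Q$-rank of $R_{k/\Q}\mathbf{G}$ with the $k$-rank of $\mathbf{G}$ --- are standard.
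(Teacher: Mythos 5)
Your proof is correct and takes essentially the same route as the paper's: reduce via the Borel--Harish-Chandra/Godement criterion to showing $\mathbf{G}$ is $k$-anisotropic, then use simplicity (isotropy at only one archimedean place) together with $k\ne\Q$ to produce a second archimedean place where $\mathbf{G}$ is anisotropic, which forces $k$-anisotropy since a $k$-split torus stays split in every completion. Your closing caveat about the imaginary quadratic case is apt --- the paper's proof silently asserts that $k$ is totally real, which is exactly the contextual hypothesis (valid where the lemma is applied) needed to rule out that degenerate situation.
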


\begin{proof}
By our assumption that $M$ is arithmetic and simple, it comes from the restriction of scalars from $k$ to $\Q$ of a simple algebraic $k$-group $\mathbf{G}$ where $k$ is some totally real number field not equal to $\Q$.
Since $M$ is simple, $\mathbf{G}$ can only be isotropic at one real place, and hence $\mathbf{G}$ must be $k$-anisotropic
By \cite[11.8]{BoHC}, $M$ is compact if and only if $R_{k/\Q}\mathbf{G}$ is $\Q$-anisotropic  which is the case if and only if $\mathbf{G}$ is $k$-anisotropic, and the result follows.
\end{proof}

\begin{lem}\label{compactcondition2}
Let $M$ be a quaternionic hyperbolic $4m$-orbifold, $m\ge 2$.  If $M$ is compact, then $k(M)\ne \Q$.
\end{lem}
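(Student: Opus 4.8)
The plan is to prove the contrapositive: if $k:=k(M)=\Q$, then $M$ is noncompact. Since $M$ is quaternionic hyperbolic it is simple and arithmetic, arising via Construction \ref{arithmeticcnlattice} from $\mathbf{G}=\mathbf{SU}(V,h)$, where $(V,h)$ is the $n$-dimensional Hermitian form attached to the \hyperref[admissibledef]{admissible triple} $(k,v_0,D)$ with $n=m+1\ge 3$. When $k=\Q$ we have $R_{k/\Q}\mathbf{G}=\mathbf{G}$, so by the Godement compactness criterion \cite[11.8]{BoHC} (exactly as invoked in Lemma \ref{compactcondition}), $M$ is compact if and only if $\mathbf{G}$ is $\Q$-anisotropic. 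Thus it suffices to show that $\mathbf{G}$ is $\Q$-\emph{isotropic}, equivalently that $h$ represents $0$ nontrivially over $\Q$.

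Next I would reduce the isotropy of $h$ to that of its trace form. By definition $q_h(x)=h(x,x)$ on the $4n$-dimensional $\Q$-vector space $V$, so $h$ is isotropic over $\Q$ if and only if the quadratic form $q_h$ is isotropic over $\Q$; no deep input is needed here, since both assertions say precisely that $h(x,x)=0$ for some $x\neq 0$. As $\dim q_h=4n\ge 12$, I would then apply the Hasse--Minkowski theorem and reduce to checking that $q_h$ is isotropic over every completion $\Q_v$.

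The local verification splits into two cases. At each finite prime $p$, any nondegenerate quadratic form over $\Q_p$ of dimension at least $5$ is isotropic \cite[63:19]{OM}, and $\dim q_h\ge 12\ge 5$, so $q_h\otimes\Q_p$ is isotropic. At the real place $v_0$ the algebra $D$ ramifies, so $D_{v_0}=\mathbb{H}$ and its norm form is the positive definite form $\varphi_D=\langle 1,1,1,1\rangle$; since $h_{v_0}$ has signature $(n-1,1)$, the identity $q_h=\varphi_D\otimes q$ gives that $q_h\otimes\R$ has signature $(4(n-1),4)$, which is indefinite and hence isotropic. Therefore $q_h$ is isotropic at every place, hence isotropic over $\Q$, so $h$ is $\Q$-isotropic, $\mathbf{G}$ is $\Q$-isotropic, and $M$ is noncompact.

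The argument is essentially a place-by-place computation, so I do not expect a serious obstacle. The one point requiring care is the reduction via \cite[11.8]{BoHC}: with $k=\Q$ the mere real isotropy of $\mathbf{G}$ (signature $(n-1,1)$ at the unique real place) must genuinely be upgraded to $\Q$-isotropy, and this is exactly what the everywhere-local isotropy of $q_h$ supplies. The essential input making this work is that $D$ is ramified at $\infty$, forcing $\varphi_D$ to be definite, so that the real trace form inherits the indefinite signature $(4(n-1),4)$ of $h$ rather than becoming definite; were $D$ split at $\infty$ this signature computation would fail, which is consistent with the fact that this case does not arise for quaternionic hyperbolic $M$.
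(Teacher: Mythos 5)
Your proof is correct and follows essentially the same route as the paper's: the contrapositive, passing to the trace form $q_h$, isotropy at all finite places from $\dim q_h>4$, isotropy at the real place, the Hasse--Minkowski principle to get $\Q$-isotropy of $h$, and then the Godement criterion \cite[11.8]{BoHC} to conclude noncompactness. Your explicit signature computation $(4(n-1),4)$ at $v_0$ merely spells out what the paper dispatches in one line ($h$, hence $q_h$, is isotropic at the real place), so there is no substantive difference.
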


\begin{proof}
We prove the contrapositive.
Suppose $k(M)=\Q$.
By assumption, $M$ is associated to an $(m+1)$-dimensional Hermitian form over an admissible pair $(\Q, D)$.
Since the trace form $q_h$ has dimension greater than 4, by \cite[Lemma 4.2.7]{Cassels}, it is isotropic at all finite places.  
By assumption $h$, and hence $q_h$, is isotropic at the real place.
Hence $q_h$ is isotropic at all places, and by the Strong Hasse Principle \cite[Theorem 6.1.1]{Cassels}, $q_h$ is isotropic, which implies $h$ is isotropic.
It follows that $\mathbf{SU}(h)$ is $\Q$-isotropic, and hence by  \cite[11.8]{BoHC}, $M$ is noncompact.
\end{proof}

\qquad 
Combining these two results, we obtain the following proposition.

\begin{prop}\label{realhyperbolicsurface}
If $M$ is a compact quaternionic hyperbolic $4m$-orbifold, $m\ge 2$, then every arithmetic real hyperbolic surface with field of definition $k=k(M)$ is commensurable to a closed, orientable, real hyperbolic, totally geodesic surface in $M$.
\end{prop}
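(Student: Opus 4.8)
The plan is to reduce the statement to Corollary~\ref{parametrizerealhyp2}, which already embeds any \emph{standard} arithmetic real hyperbolic orbifold of dimension $d$ with $2\le d\le m$ and field of definition $k$ as a totally geodesic subspace of $M$. A surface has $d=2$ and we assume $m\ge2$, so the dimension hypothesis is automatic; the only genuine issue is to remove the word ``standard'', that is, to show that in dimension two every arithmetic surface with field of definition $k$ is of simplest type, and then to check that the subspace produced is a closed orientable surface.

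First I would record the consequences of compactness. Because $M$ is compact, Lemma~\ref{compactcondition2} gives $k=k(M)\ne\Q$, so $k$ is a totally real field with at least two archimedean places. This has two effects. On the one hand, any arithmetic Fuchsian group with field of definition $k$ is derived from a quaternion algebra over $k$ which, being forced to ramify at a real place, is a division algebra; hence every arithmetic real hyperbolic surface with field of definition $k$ is automatically cocompact, as is required for it to be commensurable to a closed surface. On the other hand, as in the proof of Lemma~\ref{compactcondition}, the Hermitian form $h$ is $k$-anisotropic, so every subform is anisotropic as well; this will guarantee that the surface we construct inside $M$ is itself compact.

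The heart of the argument is the exceptional low-dimensional isomorphism $\mathbf{SO}(2,1)\cong\mathbf{PGL}_2$. Given an arithmetic real hyperbolic surface $S$ with field of definition $k$, its commensurability class is derived from a quaternion algebra $B$ over $k$ ramified at every real place but one. Restricting the norm form of $B$ to the space $B_0$ of trace-zero quaternions yields a ternary quadratic form $q_B$ over $k$ with $\mathbf{SO}(q_B)\cong B^{\times}/k^{\times}$, of signature $(2,1)$ at the unramified real place and definite elsewhere. Thus $S$ is precisely the standard arithmetic surface attached to $q_B$, with field of definition $k$. This identification---that every arithmetic hyperbolic surface is of simplest type---is the step requiring the most care, and I expect it to be the main obstacle, although it is classical; I would cite the standard treatment of arithmetic Fuchsian groups rather than reprove it.

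With $S$ realized as a standard arithmetic $2$-dimensional real hyperbolic orbifold with field of definition $k$, Corollary~\ref{parametrizerealhyp2} (applicable since $2\le m$) shows that $S$ is commensurable to a totally geodesic subspace $N\subset M$ obtained by restricting $h$ to a totally real $k$-subspace of $V$. Since $N$ is commensurable to the compact surface $S$ it is compact, which is also visible directly from the anisotropy of the restricted quadratic form recorded above. Finally, passing to a finite-index subgroup that is both torsion-free (Selberg's lemma) and orientation-preserving replaces $N$ by a commensurable closed orientable totally geodesic surface, completing the proof.
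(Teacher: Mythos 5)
Your overall route is the paper's: reduce to Corollary~\ref{parametrizerealhyp2} via the classical fact that every arithmetic Fuchsian group is standard (of simplest type), and get compactness from Lemma~\ref{compactcondition2} ($k\ne\Q$) together with Lemma~\ref{compactcondition}. Your expansion of the standard-ness step through the exceptional isomorphism and the ternary trace-zero norm form $q_B$ is exactly the content the paper compresses into the clause ``since all arithmetic real hyperbolic surfaces are standard arithmetic,'' and your cocompactness remark about $B$ being division is a correct (if redundant) cross-check. Up to this point the two proofs agree.

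The genuine divergence, and the one real gap, is the orientability step. Passing to a finite-index torsion-free, orientation-preserving subgroup $\Lambda'<\Lambda=\mathrm{Stab}_\Gamma(\widetilde{S})$ produces the closed orientable surface $\Lambda'\backslash\widetilde{S}$, but this object lives in a finite cover of $M$ (equivalently, it only immerses onto the original subspace $N$); it is not a totally geodesic surface \emph{in} $M$, which is what the proposition literally asserts. Moreover, since every hyperbolic $2$-orbifold is finitely covered by a closed orientable surface, your version of the orientability claim is nearly contentless: the substance of the proposition is that the surface sitting inside $M$ can itself be taken orientable. The paper gets this for free from quaternionic geometry: every matrix in $\mathbf{Sp}(m,1)\subset\mathbf{Sp}_{2m+2}(\C)$ has determinant $1$, so the stabilizer of the totally geodesic copy of real hyperbolic $2$-space in $\mathbf{PSp}(m,1)$ is $\mathbf{PSO}(2,1)$ --- there are simply no isometries of $\mathbf{H}_{\mathbb{H}}^m$ that preserve $\widetilde{S}$ and reverse its orientation --- hence the actual stabilizer quotient in $M$ is already orientable. (Your instinct is responding to a real issue the paper glosses over, namely possible torsion in $\Lambda$, and your weaker conclusion does suffice for the application in Theorem~\ref{mainthm:surfacesubgroup}, since $\Lambda'<\Gamma$ is still a quasiconvex surface subgroup; but as a proof of the proposition as stated, you should replace the finite-index passage with the determinant/stabilizer argument.)
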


\begin{proof}
Let $S'$ be an arithmetic hyperbolic surface with field of definition $k$.
Since all arithmetic real hyperbolic surfaces are standard arithmetic, Corollary \ref{parametrizerealhyp2} implies $S'$ is commensurable to a real hyperbolic, totally geodesic surface in $M$, which we denote $S$.
By Lemma \ref{compactcondition2} and the fact $M$ is compact, we know $k\neq \Q$, and by 
Lemma \ref{compactcondition} it follows that $S$ is compact.
The metric on $S$ is complete, hence it has no boundary and is closed.
Now let $\widetilde{S}$ be a simply connected cover of $S$ in $\mathbf{H}_{\mathbb{H}}^m$.  
Since the determinant of any matrix in $\mathbf{Sp}(m,1)\subset \mathbf{Sp}_{2m+2}(\C)$ is 1, the stabilizer of $\widetilde{S}$ in $\mathbf{PSp}(m,1)$ is $\mathbf{PSO}(2,1)$ (i.e., all isometries of $\mathbf{H}_{\mathbb{H}}^m$ stabilizing $\widetilde{S}$ are orientation preserving) and therefore $S$ is orientable.
\end{proof}

\begin{proof}[Proof of Theorem \ref{mainthm:surfacesubgroup}]
Let $S\subset M$ be one of the closed, orientable, totally geodesic, real hyperbolic surfaces produced in Proposition \ref{realhyperbolicsurface}.
By construction, $\pi_1(S)$ sits as a subgroup of $\pi_1(M)$.
%
%
%
%
%
Since $M$ is compact, the \v{S}varc--Milnor Lemma \cite[9.19]{Bridson} gives that $\pi_1(M)$ and $M$ are quasi-isometric.
Since $S$ is totally geodesic in $M$, it is convex, and hence the above quasi-isometry gives that
$\pi_1(S)$ is quasiconvex in $\pi_1(M)$.
%
\end{proof}

\begin{thm}\label{thm:relhyp}
If $\Gamma<\mathbf{Sp}(m,1)$, $m\ge2$, is a nonuniform lattice, then $\Gamma$ contains surface subgroups.
\end{thm}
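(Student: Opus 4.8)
The plan is to reduce the statement to the production of a single \emph{closed} totally geodesic real hyperbolic surface inside $M:=\Gamma\backslash\mathbf{H}_{\mathbb{H}}^m$, and then to feed this surface into the embedding results already established. First I would record that by \cite{GS} every lattice in $\mathbf{Sp}(m,1)$ is arithmetic, so $M$ is a simple arithmetic locally symmetric orbifold of type $C_{m+1}$. Since $\Gamma$ is nonuniform, $M$ is noncompact, so the contrapositive of Lemma \ref{compactcondition} forces the field of definition to be $k(M)=\Q$.

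The essential new difficulty, compared with the cocompact case treated in Theorem \ref{mainthm:surfacesubgroup}, is that when $k=\Q$ the real hyperbolic totally geodesic subspaces of $M$ need no longer be compact: a cusped hyperbolic surface has \emph{free} fundamental group and is of no use here. The main obstacle is therefore to locate a \emph{closed} totally geodesic surface, and I would overcome it by exploiting that $\Q$ admits anisotropic ternary quadratic forms of signature $(2,1)$. Fix such a form $q_0$, for instance the trace-zero norm form of a quaternion division algebra over $\Q$ that is split at the real place; this form is indefinite of signature $(2,1)$ at $\infty$ and $\Q$-anisotropic because the algebra is division. Since $q_0$ is $\Q$-anisotropic, $\mathbf{SO}(q_0)$ is $\Q$-anisotropic, and by \cite[11.8]{BoHC} the associated standard arithmetic hyperbolic surface $S$ is compact, with field of definition $\Q=k(M)$.

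Because $m\ge 2$, I would apply Corollary \ref{parametrizerealhyp2} with $d=2$ to conclude that $S$ is commensurable to a totally geodesic surface $S_0\subset M$, realized by restricting $h$ to a totally real $2$-dimensional $k$-subspace of $V$. Compactness is a commensurability invariant (a common finite cover of $S$ and $S_0$ is compact), so $S_0$ is compact, hence closed; the determinant argument of Proposition \ref{realhyperbolicsurface} further shows $S_0$ is orientable. As $S_0$ is totally geodesic, its universal cover is a contractible totally geodesic subspace of $\mathbf{H}_{\mathbb{H}}^m$, whence the inclusion induces an injection $\pi_1(S_0)\hookrightarrow\pi_1(M)=\Gamma$. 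Finally $\pi_1(S_0)$ is a cocompact Fuchsian group; passing to a torsion-free finite-index subgroup (which exists since $\Gamma$ is a finitely generated linear group) yields a genuine surface subgroup of $\Gamma$. The only ingredient genuinely beyond the cocompact argument is the choice of the anisotropic ternary form guaranteeing compactness of the surface; everything else is inherited from the embedding machinery of the previous section.
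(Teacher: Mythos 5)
Your proof is correct and takes essentially the same route as the paper's: nonuniformity forces $k(M)=\Q$ (the paper invokes Lemma \ref{compactcondition2}, though as you note the needed implication is really the contrapositive of Lemma \ref{compactcondition}), and a compact arithmetic hyperbolic surface with field of definition $\Q$ is then embedded, up to commensurability, as a totally geodesic surface in $M$ via Corollary \ref{parametrizerealhyp2}. Your explicit construction of that compact surface from the $\Q$-anisotropic trace-zero norm form of a quaternion division algebra split at $\infty$, together with the orientability and torsion-free--subgroup remarks, merely fills in details the paper leaves implicit in the sentence ``let $S'$ be a compact surface with field of definition $\Q$.''
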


\begin{proof} 
It suffices to show that, for $m\ge 2$, every noncompact, finite volume, quaternionic hyperbolic $4m$-orbifold  $M$ contains a compact, real hyperbolic, totally geodesic surface.  
By Lemma \ref{compactcondition2}, it follows that $k(M)=\Q$.
Let $(\Q, D)$ be the admissible pair associated to $M$.
Let $S'$ be a compact surface with field of definition $\Q$.  
Corollary \ref{parametrizerealhyp2} implies $S'$ is commensurable to a real hyperbolic, totally geodesic surface $S$ in $M$.
Then $\pi_1(S)$ is a surface subgroup in $\Gamma=\pi_1(M)$, and the result follows.
\end{proof}

\qquad 
Since nonuniform lattices in $\mathbf{Sp}(m,1)$ are relatively hyperbolic, 
Theorem \ref{thm:relhyp} gives the existence of a class of relatively hyperbolic groups that contain surface subgroups.


\end{document}